\newtheorem{theorem}{Theorem}
\numberwithin{theorem}{section}
\newtheorem{corollary}[theorem]{Corollary}
\newtheorem{lemma}[theorem]{Lemma}
\newtheorem{proposition}[theorem]{Proposition}
{\theoremstyle{definition}
\newtheorem{definition}[theorem]{Definition}
\newtheorem{remark}[theorem]{Remark}

}
\newcommand{\atrs}{\mathbf{ATR_0^{\operatorname{set}}}}
\newcommand{\prs}{\mathbf{PRS\omega}}
\newcommand{\rca}{\mathbf{RCA_0}}
\newcommand{\aca}{\mathbf{ACA_0}}
\newcommand{\atr}{\mathbf{ATR_0}}
\newcommand{\rng}{\operatorname{rng}}
\newcommand{\otyp}{\operatorname{otyp}}
\newcommand{\supp}{\operatorname{supp}}
\newcommand{\en}{\operatorname{en}}
\newcommand{\bh}{\operatorname{BH}}
\newcommand{\lef}{<^{\operatorname{fin}}}
\title{Computable Aspects of the Bachmann-Howard Principle}
\author{Anton Freund}
\address{Anton Freund, Fachbereich Mathematik, Technische Universit\"at Darmstadt, Schloss\-gartenstr.~7, 64289 Darmstadt, Germany}
\begin{document}

\begin{abstract}
We have previously established that $\Pi^1_1$-comprehension is equivalent to the statement that every dilator has a well-founded Bachmann-Howard fixed point, over $\atr$. In the present paper we show that the base theory can be lowered to $\rca$. We also show that the minimal Bachmann-Howard fixed point of a dilator $T$ can be represented by a notation system $\vartheta(T)$, which is computable relative to $T$. The statement that $\vartheta(T)$ is well-founded for any dilator $T$ will still be equivalent to $\Pi^1_1$-comprehension. Thus the latter is split into the computable transformation $T\mapsto\vartheta(T)$ and a statement about the preservation of well-foundedness, over a system of computable mathematics.
\end{abstract}

\keywords{Well-Ordering Principles, $\Pi^1_1$-Comprehension, Dilators, Bachmann-Howard fixed points, Reverse Mathematics}
\subjclass[2010]{03B30, 03D60, 03F15}

\maketitle
{\let\thefootnote\relax\footnotetext{This is the submitted version (before peer review) of a paper published in the Journal of Mathematical Logic 20(2) 2020, article no. 2050006, 26 pp, \href{https://doi.org/10.1142/S0219061320500063}{doi:10.1142/S0219061320500063}. Note, in particular, that the numbering of theorems differs from the published version.}

\section{Introduction}

We begin by recalling the abstract Bachmann-Howard principle, which was introduced in \cite{freund-equivalence} (based on the author's PhD thesis~\cite{freund-thesis} and an earlier arXiv preprint~\cite{freund-bh-preprint}). For this purpose we consider the category of linear orders, with order embeddings as morphisms. The forgetful functor to the underlying set of an order will be left implicit. Conversely, we will often view a subset of an ordered set as a suborder. Given a set $X$, we put
\begin{equation*}
 [X]^{<\omega}:=\text{``the set of finite subsets of $X$''}.
\end{equation*}
To obtain a functor we map $f:X\rightarrow Y$ to the function $[f]^{<\omega}:[X]^{<\omega}\rightarrow[Y]^{<\omega}$ with
\begin{equation*}
 [f]^{<\omega}(a):=\{f(s)\,|\,s\in a\}.
\end{equation*}
It is easy to see that $X\mapsto[X]^{<\omega}$ and $f\mapsto[f]^{<\omega}$ are primitive recursive set functions in the sense of Jensen and Karp~\cite{jensen-karp} (with parameter $\omega$). The same will hold for all class functions considered in the sequel. This allows us to formalize our investigation in primitive recursive set theory with infinity ($\prs$), as introduced by Rathjen~\cite{rathjen-set-functions}. Extending $\prs$ by axiom beta and the axiom of countability leads us to $\atrs$, the set-theoretic version of arithmetical transfinite recursion due to Simpson~\cite{simpson82,simpson09}. A detailed introduction to these theories can be found in \cite[Chapter~1]{freund-thesis}. The theory $\prs$ cannot quantify over all primitive recursive set functions. It can, however, quantify over a primitive recursive family of class functions, by quantifying over its set-sized parameters. A definition or proposition which speaks about a collection of class functions (e.g.~about arbitrary endofunctors of linear orders) should be read as a schema: Officially, we have a separate definition or proposition for each primitive recursive family of class functions. Such a family may depend on further sets as parameters (in particular the parameter $\omega$ is often required). We will see that the restriction to primitive recursive set functions does not affect the generality of our results. With these methodological remarks in mind we state the following definition, essentially due to Girard~\cite{girard-pi2}:

\begin{definition}[$\prs$]\label{def:prae-dilator}
 A prae-dilator consists of
 \begin{enumerate}[label=(\roman*)]
  \item an endofunctor $T$ of linear orders and
  \item a natural transformation $\supp^T:T\Rightarrow[\cdot]^{<\omega}$ that computes supports, in the following sense: For any linear order $X$ and any element $\sigma\in T_X$ we have $\sigma\in\rng(T_{\iota_\sigma})$, where $\iota_\sigma:\supp^T_X(\sigma)\hookrightarrow X$ is the inclusion.
 \end{enumerate}
 If $T_X$ is well-founded for any well-order $X$, then $(T,\supp^T)$ is called a dilator.
\end{definition}

Girard's original definition does not include the natural transformation $\supp^T$ but demands that $T$ preserves direct limits and pullbacks. It is straightforward to check that the two formulations are equivalent (see~\cite[Remark~2.2.2]{freund-thesis}), but we find it very helpful to make the support functions explicit. Our prae-dilators are not quite equivalent to Girard's pre-dilators (hence the different spelling), since \cite[Definition~4.4.1]{girard-pi2} contains an additional monotonicity condition. The latter is automatic in the well-founded case, so that it does not make a difference for dilators. According to Girard's definition the values of a dilator have to be ordinals. This has the advantage that isomorphic dilators become equal. Nevertheless we want to allow arbitrary well-orders as values: It will be important that we can represent $T_X\cong\alpha$ by a well-order of rank below $\alpha$ (cf.~Remark~\ref{rmk:dilators-order-types} below). To proceed we introduce the following notation: If $(X,<_X)$ is a linear order (or just a preorder), then the preorder $\lef_X$ on $[X]^{<\omega}$ is defined by
\begin{equation*}
 a\lef_X b\quad:\Leftrightarrow\quad\text{``for any $s\in a$ there is a $t\in b$ with $s<_X t$''.}
\end{equation*}
 In the case of singletons we will write $s\lef_X b$ and $a\lef_X t$ rather than $\{s\}\lef_X b$ resp.~$a\lef_X \{t\}$. The relation $\leq^{\operatorname{fin}}_X$ is defined analogously. In \cite{freund-equivalence} we have introduced the following notion (with precursors in \cite{freund-bh-preprint,freund-thesis}):
 
\begin{definition}[$\prs$]\label{def:bachmann-howard-collapse}
 Consider a prae-dilator $(T,\supp^T)$ and a linear order $X$. A function
 \begin{equation*}
  \vartheta:T_X\rightarrow X
 \end{equation*}
 is called a Bachmann-Howard collapse if the following holds for all $\sigma,\tau\in T_X$:
 \begin{enumerate}[label=(\roman*)]
  \item If we have $\sigma<_{T_X}\tau$ and $\supp^T_X(\sigma)\lef_X\vartheta(\tau)$, then we have $\vartheta(\sigma)<_X\vartheta(t)$.
  \item We have $\supp^T_X(\sigma)\lef_X\vartheta(\sigma)$.
 \end{enumerate}
 If such a function exists, then $X$ is called a Bachmann-Howard fixed point of $T$.
\end{definition}

In general Bachmann-Howard fixed points do not need to be well founded, but we are particularly interested in the case where they are:

\begin{definition}[$\prs$]\label{def:abstract-bhp}
 The abstract Bachmann-Howard principle is the assertion that every dilator has a well-founded Bachmann-Howard fixed point.
\end{definition}

Let us point out that the side condition $\supp^T_X(\sigma)\lef_X\vartheta(\tau)$ in the definition of a Bachmann-Howard collapse is crucial: It is possible that the order-type of $T_X$ is bigger than the order-type of $X$, for any well-order $X$. In this case the function $\vartheta:T_X\rightarrow X$ cannot be fully order preserving. The definition of Bachmann-Howard collapse is inspired by the construction of the Bachmann-Howard ordinal, in particular by the notation system due to Rathjen (see~\cite[Section~1]{rathjen-weiermann-kruskal}). Using a strong meta theory, it is standard to show that the abstract Bachmann-Howard principle is sound (e.g.~the first uncountable cardinal is a Bachmann-Howard fixed point of any dilator with hereditarily countable parameters, see~\cite[Section~2]{freund-equivalence}). On the other hand, the fact that a Bachmann-Howard collapse $\vartheta:T_X\rightarrow X$ is ``almost'' order preserving requires the existence of rather large ordinals, which suggests that the abstract Bachmann-Howard principle is strong. This is confirmed by the following result:

\begin{theorem}\label{thm:main-abstract}
 The following are equivalent over $\atrs$:
 \begin{enumerate}[label=(\roman*)]
  \item The principle of $\Pi^1_1$-comprehension.
  \item The statement that every set is an element of some admissible set.
  \item The abstract Bachmann-Howard principle.
 \end{enumerate}
\end{theorem}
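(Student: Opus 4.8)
The plan is to establish the two equivalences $(i)\Leftrightarrow(ii)$ and $(ii)\Leftrightarrow(iii)$, which together give all three equivalences. Throughout I would work over $\atrs$, exploiting that axiom beta lets us identify a well-founded relation with an ordinal and that the countability axiom lets us replace an arbitrary set by a real coding its transitive closure. This means that the relevant notions from admissibility theory and from classical reverse mathematics are available side by side.

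\textbf{Step 1: $(i)\Leftrightarrow(ii)$.} This is the set-theoretic form of the classical equivalence of $\Pi^1_1$-comprehension with closure under the hyperjump. For $(i)\Rightarrow(ii)$, given a set $x$ coded by a real $X$, I would use $\Pi^1_1$-comprehension to form the hyperjump $\mathcal{O}^X$, read off the ordinal $\omega_1^X$ with the help of axiom beta, build $L_{\omega_1^X}[X]$ by arithmetical transfinite recursion along $\omega_1^X$ (available in $\atrs$), and invoke the classical verification that this is an admissible set; it contains $X$, hence $x$. For $(ii)\Rightarrow(i)$ one argues conversely: inside an admissible set containing the parameter, well-foundedness of a coded relation is equivalent to the existence of a ranking function, so $\mathcal{O}^X$ is recovered as a $\Sigma_1$-definable, hence set-sized, subset of $\omega$, whereupon $\Pi^1_1$-comprehension follows via the Kleene normal form theorem (already over $\rca$). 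I would present this step concisely, as background.

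\textbf{Step 2: $(ii)\Rightarrow(iii)$.} Let $T$ be a dilator; by the methodological conventions it is coded by a set, so $(ii)$ yields an admissible set $A$ with $T\in A$, say with ordinal height $\Omega=A\cap\mathrm{Ord}$. Each value $T_\alpha$ with $\alpha<\Omega$ lies in $A$, so the directed union $\varinjlim_{\alpha<\Omega}T_\alpha$ is a $\Sigma$-definable class over $A$; on the relevant part of it I would define a collapsing function $\vartheta$ into $\Omega$ by a $\Sigma$-recursion, at each argument $\sigma$ choosing $\vartheta(\sigma)$ as small as possible subject to clauses (i) and (ii) of Definition~\ref{def:bachmann-howard-collapse}. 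The $\Sigma$-recursion theorem of $\mathbf{KP}$ guarantees that this yields an honest class function, and $\Sigma$-reflection bounds below $\Omega$ the closure $X$ of its range under supports, so that $X$ is a set, manifestly well-founded, with $\vartheta\colon T_X\to X$ a Bachmann--Howard collapse. The one delicate point is the side condition $\supp^T_X(\sigma)\lef_X\vartheta(\sigma)$: the recursion must be organised so that every support destined to land below $\vartheta(\sigma)$ has already been collapsed by the time $\vartheta(\sigma)$ is chosen, which one secures by recursing along a well-founded relation on the finite supports rather than along $<_{T_X}$.

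\textbf{Step 3: $(iii)\Rightarrow(ii)$, the heart of the proof.} Given a real $X$ I would construct, uniformly in $X$, a dilator $D^{X}$ whose values $D^{X}_\alpha$ enumerate the ordering of an initial segment of the relativised hierarchy $L_\alpha[X]$, equipped with enough syntactic bookkeeping (and with explicitly computed supports) that any Bachmann--Howard collapse $\vartheta\colon D^{X}_Y\to Y$ forces $Y$ to be $X$-admissible. Applying the abstract Bachmann--Howard principle to $D^{X}$ produces a \emph{well-founded} such $Y$; by axiom beta it decodes to a genuine admissible set, which by construction contains $X$, giving $(ii)$. The principal obstacle --- and the step carrying the real strength of $\Pi^1_1$-comprehension --- is the design and verification of $D^{X}$: one must check that it preserves well-foundedness and direct limits, so that it is indeed a dilator, and, more subtly, that the interaction of the side condition in the collapse with the $\Sigma$-recursive $L$-hierarchy really does force admissibility of $Y$, rather than merely a large order type. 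Once Step~3 is in place all three statements are equivalent, and the remaining arguments are routine applications of $\mathbf{KP}$ and of the reverse-mathematical dictionary between hyperjumps and admissibility.
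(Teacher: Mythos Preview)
Your overall decomposition---$(i)\Leftrightarrow(ii)$ as classical, then $(ii)\Leftrightarrow(iii)$ split into the ``easy'' direction $(ii)\Rightarrow(iii)$ and the hard direction $(iii)\Rightarrow(ii)$---is exactly the one the paper adopts. Note, however, that the paper does \emph{not} prove Theorem~\ref{thm:main-abstract} in the text at all: immediately after the statement it cites $(i)\Leftrightarrow(ii)$ from J\"ager (with base theory lowered in the author's thesis) and $(ii)\Leftrightarrow(iii)$ from the companion paper \cite{freund-equivalence}. So there is no in-paper proof for your sketch to be compared against; you are effectively reconstructing the cited arguments.

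Your Step~1 and Step~2 are broadly in line with those references. In Step~2 the circularity you flag is real, and ``recurse along a well-founded relation on the finite supports'' is the right instinct; the construction in \cite{freund-equivalence} makes this precise by taking the ordinal height $\Omega$ of the admissible set itself as the fixed point and defining $\vartheta(\sigma)$ via the order type of a set that lives inside $A$ (so that axiom beta applies), rather than by a naive minimisation. Your formulation ``choose $\vartheta(\sigma)$ as small as possible subject to clauses (i) and (ii)'' is not literally implementable, since clause~(i) quantifies over \emph{all} $\tau$, including those not yet treated; you would need to say more here.

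Step~3 is where your proposal has a genuine gap. You correctly identify it as the heart of the proof and candidly call the design of $D^X$ ``the principal obstacle,'' but what you actually write---values $D^X_\alpha$ that ``enumerate the ordering of an initial segment of $L_\alpha[X]$''---is not a construction, and it is not clear that any dilator of that shape would force admissibility of a Bachmann--Howard fixed point rather than, say, merely a large order type (as you yourself worry). The argument in \cite{freund-equivalence} is substantially more elaborate: the dilator is built from proof-theoretic data (deduction/search trees for $\Sigma$-formulas over $L[X]$), and the verification that a well-founded Bachmann--Howard fixed point yields an admissible set is the main technical content of that paper. Your sketch gestures at the target without supplying the mechanism; as written it would not go through.
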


The equivalence between (i) and (ii) is shown in \cite[Section~7]{jaeger-admissibles} (see also~\cite[Section~1.4]{freund-thesis}, where the base theory is lowered to $\atrs$). The equivalence between (ii) and (iii) is established in~\cite{freund-equivalence}, based on similar results in \cite{freund-bh-preprint,freund-thesis}. Note that Girard, in the unpublished second part of his book on proof theory~\cite[Section~11.6]{girard-book-part2}, states a related equivalence but does not give a complete proof.

The aim of the present paper is to resolve two shortcomings of Theorem~\ref{thm:main-abstract}: First we will show that the equivalence between (i) and (iii) holds over the base theory~$\rca$. For this purpose we formalize dilators in second-order arithmetic. This was already done by Girard, but we find it worthwhile to give a detailed presentation in terms of support functions (cf.~part~(ii) of Definition~\ref{def:prae-dilator}). The formalization relies on Girard's result that a prae-dilator is determined by its restriction to the category of natural numbers (up to natural isomorphism). In fact we will see that there is a single primitive recursive set function that reconstructs any prae-dilator from its set-sized restriction. This has two welcome side effects: It will enable us to express the abstract Bachmann-Howard principle by a single formula rather than a schema. And it means that the restriction to primitive recursive set functions is no loss of generality, as promised above. Once statement~(iii) of Theorem~\ref{thm:main-abstract} is expressed in second-order arithmetic it is immediate that the equivalence between (i) and (iii) holds over the second-order theory $\atr$, over which $\atrs$ is conservative (due to Simpson~\cite{simpson82,simpson09}). We then prove that the abstract Bachmann-Howard principle implies arithmetical transfinite recursion. It follows that the equivalence between (i) and (iii) holds over~$\rca$.

The second shortcoming of Theorem~\ref{thm:main-abstract} results from our ``abstract'' formulation of the Bachmann-Howard principle: We have merely asserted the existence of a Bachmann-Howard fixed point, without specifying how a concrete fixed point might be constructed. Thus it is not immediately clear whether the strength of the abstract Bachmann-Howard principle lies in the existence of a Bachmann-Howard fixed point or in the assertion that such a fixed point is well-founded. In~\cite{freund-categorical} (similarly already in~\cite[Section~2.2]{freund-thesis}) we have shown that any prae-dilator $T$ has a minimal Bachmann-Howard fixed point $\bh(T)$, which can be constructed by a primitive recursive set function: The idea is to define $\bh(T)$ as the direct limit of orders $X_0,X_1,\dots$. The construction ensures that we have almost order preserving collapsing functions $\vartheta_{X_n}:T_{X_n}\rightarrow X_{n+1}$, which glue to the desired Bachmann-Howard collapse $\vartheta:T_{\bh(T)}\rightarrow\bh(T)$. In the present paper we give a construction that can be carried out in~$\rca$: We describe a notation system $\vartheta(T)$ for $\bh(T)$, which is computable relative to a given prae-dilator $T$ (or rather, relative to the restriction of $T$ to the category of natural numbers). We can then state a computable Bachmann-Howard principle, which asserts that $\vartheta(T)$ is well-founded for any dilator $T$. Due to the minimality of $\vartheta(T)$ the computable Bachmann-Howard principle is equivalent to its abstract counterpart and thus to $\Pi^1_1$-comprehension.

Let us explain why the results of the present paper are important: A type-one well-ordering principle is a (computable) transformation $X\mapsto T_X$ of linear orders, together with the assertion that $T_X$ is well-founded for any well-order $X$. Observe that such an assertion is a $\Pi^1_2$-statement. The literature contains many equivalences between type-one well-ordering principles and natural $\Pi^1_2$-statements that are known from reverse mathematics (see~\cite{girard87,hirst94,marcone-montalban,rathjen-afshari,friedman-mw,rathjen-weiermann-atr,marcone-montalban,rathjen-atr,rathjen-model-bi,thomson-thesis,thomson-rathjen-Pi-1-1}). Rathjen~\cite{rathjen-wops-chicago,rathjen-atr} and Montalb\'an~\cite{montalban-draft,montalban-open-problems} had conjectured that $\Pi^1_1$-comprehension, which is a $\Pi^1_3$-statement, has a similar characterization by a type-two well-ordering principle. Such a principle should transform each type-one well-ordering principle into a well-order (or into another type-one well-ordering principle, but the type of the codomain can be lowered by Currying). Theorem~\ref{thm:main-abstract} makes a huge step towards that conjecture: The abstract Bachmann-Howard principle does certainly encapsulate a type-two well-ordering principle. It fails, however, to separate the well-ordering principle into a computable transformation and a statement about the preservation of well-foundedness. The computable Bachmann-Howard principle achieves this separation, so that we finally have a fully satisfactory solution of Rathjen and Montalb\'an's conjecture.

To conclude this introduction the author would like to point out that parts of the present paper are based on Sections~2.3 and~2.4 of his PhD thesis~\cite{freund-thesis}.

\section{Dilators in Primitive Recursive Set Theory and Second-Order Arithmetic}\label{sect:dilator-prs-so}

Girard~\cite{girard-pi2} has shown that dilators are determined by their restrictions to the category of natural numbers. This makes it possible to view them as set-sized objects and to represent them in second-order arithmetic. The aim of the present section is to give a more explicit presentation of the constructions that are involved (the support functions from Definition~\ref{def:prae-dilator} will turn out very useful). As a result, we will see that the abstract Bachmann-Howard principle can be expressed by a single formula (rather than a schema) in the language of second-order arithmetic (rather than set theory).

In the first half of this section we work in primitive recursive set theory ($\prs$). Our goal is to define a primitive recursive set function that reconstructs any prae-dilator from its restriction to the category of natural numbers. The objects of this category are the natural numbers, each identified with its ordered set of predecessors. The morphisms are the order embeddings
\begin{equation*}
 n=\{0,\dots,n-1\}\rightarrow\{0,\dots,m-1\}=m.
\end{equation*}
Note that the resulting category is equivalent to the category of finite linear orders. We write $\en_a:|a|\rightarrow a$ for the isomorphism between a finite linear order $a$ and its cardinality $|a|$. If $f:a\rightarrow b$ is an embedding of finite linear orders, then $|f|:|a|\rightarrow|b|$ denotes the unique order preserving function with
\begin{equation*}
\en_b\circ|f|=f\circ\en_a. 
\end{equation*}
Thus $\en_{(\cdot)}$ is a natural isomorphism between the functor $|\cdot|$ and the identity. Let us also fix the notation
\begin{equation*}
 \iota_X^Y:X\hookrightarrow Y
\end{equation*}
for the inclusion of sets $X\subseteq Y$. We will show that prae-dilators are equivalent to the following set-sized objects:

\begin{definition}[$\prs$]\label{def:set-prae-dilator}
 A set-sized prae-dilator consists of
 \begin{enumerate}[label=(\roman*)]
  \item a functor $T$ from natural numbers to linear orders and
  \item a natural transformation $\supp^T:T\Rightarrow[\cdot]^{<\omega}$ which computes supports, in the following sense: For any $n$ and $\sigma\in T_n$ we have $\sigma\in\rng(T_{\iota_\sigma\circ\en_\sigma})$, with
  \begin{alignat*}{3}
   \en_\sigma&=\en_{\supp^T_n(\sigma)}&&:{}&|\supp^T_n(\sigma)|&\rightarrow\supp^T_n(\sigma),\\
   \iota_\sigma&=\iota_{\supp^T_n(\sigma)}^n&&:{}&\supp^T_n(\sigma)&\hookrightarrow\{0,\dots,n-1\}.
  \end{alignat*}
 \end{enumerate}
\end{definition}

The definition of set-sized dilator will be given below, as it requires some preliminary work (it will not be enough to test the well-foundedness of $T_n$ for all natural numbers). Note that we could not demand $\sigma\in\rng(T_{\iota_\sigma})$ in condition (ii) above, since $\supp^T_n(\sigma)$ may not be a natural number. If $T$ is a prae-dilator in the sense of Definition~\ref{def:prae-dilator} (in the following we will speak of class-sized (prae-)dilators), then $\sigma\in\rng(T_{\iota_\sigma})$ implies $\sigma\in\rng(T_{\iota_\sigma\circ\en_\sigma})$, because $\en_\sigma$ is an isomorphism. This gives one direction of our desired equivalence:

\begin{lemma}[$\prs$]\label{lem:prae-dilator-restriction-set}
 The restriction of a class-sized prae-dilator $T$ to the category of natural numbers yields a set-sized prae-dilator~$T\!\restriction\!\mathbb N$.
\end{lemma}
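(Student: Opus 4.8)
The plan is to verify the two defining conditions of a set-sized prae-dilator (Definition~\ref{def:set-prae-dilator}) for the object $T\!\restriction\!\mathbb N$ obtained by restricting a class-sized prae-dilator $T$ to the subcategory of natural numbers. First I would observe that a functor from linear orders restricts to a functor from natural numbers, since each natural number $n=\{0,\dots,n-1\}$ is in particular a linear order and each embedding $n\to m$ is in particular an embedding of linear orders; thus condition~(i) is immediate. Likewise, the natural transformation $\supp^T:T\Rightarrow[\cdot]^{<\omega}$ restricts to a natural transformation between the restricted functors, since naturality is a property of the components $\supp^T_n$ which is simply inherited from the ambient category. So condition~(i) and the existence of the support transformation in condition~(ii) are essentially bookkeeping.

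The only point requiring an argument is the support property in condition~(ii): for each $n$ and each $\sigma\in T_n$, we must have $\sigma\in\rng(T_{\iota_\sigma\circ\en_\sigma})$. By part~(ii) of Definition~\ref{def:prae-dilator}, applied to the linear order $X=n$, we already know $\sigma\in\rng(T_{\iota_\sigma})$, where $\iota_\sigma:\supp^T_n(\sigma)\hookrightarrow n$ is the inclusion. Now factor $\iota_\sigma = (\iota_\sigma\circ\en_\sigma)\circ\en_\sigma^{-1}$ using the isomorphism $\en_\sigma:|\supp^T_n(\sigma)|\to\supp^T_n(\sigma)$. Functoriality of $T$ gives $T_{\iota_\sigma} = T_{\iota_\sigma\circ\en_\sigma}\circ T_{\en_\sigma^{-1}}$, so $\rng(T_{\iota_\sigma})\subseteq\rng(T_{\iota_\sigma\circ\en_\sigma})$, and hence $\sigma\in\rng(T_{\iota_\sigma\circ\en_\sigma})$ as required. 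This is exactly the implication already indicated in the paragraph preceding the lemma.

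Finally, I would note that the whole construction is uniform and primitive recursive in the parameter $\omega$: the map $T\mapsto T\!\restriction\!\mathbb N$ simply restricts the domain of the functor and of the support transformation to the hereditarily finite part indexed by $\omega$, which is a primitive recursive set operation. There is no genuine obstacle here; the lemma is a warm-up that fixes notation and records the easy direction of the equivalence between class-sized and set-sized prae-dilators. The substantive direction---reconstructing a class-sized prae-dilator from its set-sized restriction by a single primitive recursive set function---is what the subsequent development will have to address.
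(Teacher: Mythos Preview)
Your proposal is correct and follows exactly the approach the paper indicates: the paper does not give a separate proof of this lemma but simply records, in the paragraph immediately preceding it, that $\sigma\in\rng(T_{\iota_\sigma})$ implies $\sigma\in\rng(T_{\iota_\sigma\circ\en_\sigma})$ because $\en_\sigma$ is an isomorphism---precisely the factorization you spell out. Your additional remarks on the restriction of the functor and natural transformation, and on primitive recursiveness, are routine bookkeeping that the paper leaves implicit.
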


For the other direction we must reconstruct $T$ from $T\!\restriction\!\mathbb N$. The idea is to use the pair $\langle a,\sigma\rangle$ with $a\in[X]^{<\omega}$ and $\sigma\in T_{|a|}$ to represent the element $T_{\iota_a^X\circ\en_a}(\sigma)\in T_X$. To get a unique representation we include the minimality condition $\supp^T_{|a|}(\sigma)=|a|$.

\begin{definition}[$\prs$]\label{def:reconstruct-dilators}
Let $(T,\supp^T)$ be a set-sized prae-dilator. For each linear order $X$ we define a set $D^T_X$ and a binary relation $<_{D^T_X}$ on $D^T_X$ by
\begin{gather*}
 D^T_X:=\{\langle a,\sigma\rangle\,|\,a\in[X]^{<\omega}\text{ and }\sigma\in T_{|a|}\text{ and }\supp^T_{|a|}(\sigma)=|a|\},\\
 \langle a,\sigma\rangle<_{D^T_X}\langle b,\tau\rangle:\Leftrightarrow T_{|\iota_a^{a\cup b}|}(\sigma)<_{T_{|a\cup b|}}T_{|\iota_b^{a\cup b}|}(\tau).
\end{gather*}
If $f:X\rightarrow Y$ is an order embedding, then we put
\begin{equation*}
 D^T_f(\langle a,\sigma\rangle):=\langle [f]^{<\omega}(a),\sigma\rangle
\end{equation*}
to define a function $D^T_f:D^T_X\rightarrow D^T_Y$ (note $|[f]^{<\omega}(a)|=|a|$ to see $D^T_f(\langle a,\sigma\rangle)\in D^T_Y$). Finally, we define a family of functions $\supp^{D^T}_X:D^T_X\rightarrow[X]^{<\omega}$ by setting
\begin{equation*}
 \supp^{D^T}_X(\langle a,\sigma\rangle):=a
\end{equation*}
for each linear order $X$.
\end{definition}

It is straightforward to check that the maps $(T,X)\mapsto(D^T_X,<_{D^T_X})$, $(T,f)\mapsto D^T_f$ and $(T,X)\mapsto\supp^{D^T}_X$ are primitive recursive set functions (see~\cite{freund-thesis} for details). For the other direction of our equivalence we show the following:

\begin{lemma}[$\prs$]\label{lem:set-prae-dilator-to-class}
 If $(T,\supp^T)$ is a set-sized prae-dilator, then $(D^T,\supp^{D^T})$ is a class-sized prae-dilator.
\end{lemma}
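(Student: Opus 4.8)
The plan is to verify the two defining properties of a class-sized prae-dilator (Definition~\ref{def:prae-dilator}) for the pair $(D^T,\supp^{D^T})$ constructed in Definition~\ref{def:reconstruct-dilators}. First I would check that $D^T$ is an endofunctor of linear orders: the relation $<_{D^T_X}$ must be shown to be a linear order, and $D^T$ must respect identities and composition. For linearity, trichotomy and transitivity of $<_{D^T_X}$ reduce, via the definition $\langle a,\sigma\rangle <_{D^T_X}\langle b,\tau\rangle :\Leftrightarrow T_{|\iota_a^{a\cup b}|}(\sigma) <_{T_{|a\cup b|}} T_{|\iota_b^{a\cup b}|}(\tau)$, to the corresponding properties of the linear order $T_{|c|}$ for a sufficiently large common finite set $c$ (take $c = a\cup b\cup d$ and use functoriality of $T$ together with the fact that $|\iota|$ is order preserving and injective to compare the three images in $T_{|c|}$). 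Here the minimality condition $\supp^T_{|a|}(\sigma)=|a|$ is what makes $<_{D^T_X}$ a \emph{strict} order rather than merely a preorder: if $\langle a,\sigma\rangle$ and $\langle b,\tau\rangle$ represent the same point then $a=b$ (comparing supports, which are computed by the natural transformation $\supp^T$) and hence $\sigma=\tau$. Functoriality of $D^T_f(\langle a,\sigma\rangle)=\langle[f]^{<\omega}(a),\sigma\rangle$ is immediate from functoriality of $[\cdot]^{<\omega}$, and the fact that $D^T_f$ is an order embedding again follows by pushing the comparison into a common $T_{|c|}$ and using that $f$ is order preserving so $|\iota^{a\cup b}|$ is respected under $[f]^{<\omega}$.

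Next I would verify that $\supp^{D^T}:D^T\Rightarrow[\cdot]^{<\omega}$ is a natural transformation: naturality says $[f]^{<\omega}(\supp^{D^T}_X(\langle a,\sigma\rangle)) = \supp^{D^T}_Y(D^T_f(\langle a,\sigma\rangle))$, i.e. $[f]^{<\omega}(a)=[f]^{<\omega}(a)$, which is a triviality. The remaining, and genuinely substantive, point is the support condition of Definition~\ref{def:prae-dilator}(ii): for $\langle a,\sigma\rangle\in D^T_X$ with $\supp^{D^T}_X(\langle a,\sigma\rangle)=a$ we must exhibit $\langle a,\sigma\rangle$ in the range of $D^T_{\iota_a}$, where $\iota_a : a\hookrightarrow X$ viewed as a suborder. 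But $D^T_{\iota_a}(\langle a,\sigma\rangle) = \langle [\iota_a]^{<\omega}(a),\sigma\rangle = \langle a,\sigma\rangle$ since $[\iota_a]^{<\omega}$ is the identity on subsets of $a$; so $\langle a,\sigma\rangle$ is trivially in the range. Thus property~(ii) is essentially built into the construction.

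I expect the main technical obstacle to be not any single deep step but the bookkeeping around the comparison $<_{D^T_X}$: one must repeatedly choose a common finite set $c\supseteq a\cup b$ (and $\supseteq a\cup b\cup e$ for three-term transitivity), check that the definition of $<_{D^T_X}$ is independent of the choice of $c$ (this uses that the embeddings $|\iota^{a\cup b}_{a\cup b}|\hookrightarrow|\iota^c_c|$ commute appropriately and that $T$ applied to a composite equals the composite of $T$ applied to the pieces), and only then transfer trichotomy and transitivity from $T_{|c|}$. The well-definedness-under-enlargement is the heart of the argument; once it is in place, everything else follows from functoriality of $T$ and of $[\cdot]^{<\omega}$ together with the elementary fact that $a\mapsto|a|$ and $f\mapsto|f|$ is a natural isomorphism onto the identity. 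All of this is primitive recursive, so the reasoning goes through in $\prs$; I would refer to \cite{freund-thesis} for the routine verifications and spell out only the well-definedness of the comparison relation and the support condition here.
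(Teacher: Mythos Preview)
Your proposal is correct and follows essentially the same approach as the paper. The paper singles out exactly the antisymmetry implication $T_{|\iota_a^{a\cup b}|}(\sigma)=T_{|\iota_b^{a\cup b}|}(\tau)\Rightarrow\langle a,\sigma\rangle=\langle b,\tau\rangle$, proves it via naturality of $\supp^T$ and the condition $\supp^T_{|a|}(\sigma)=|a|$ (your ``comparing supports'' step), declares the remaining functoriality and naturality checks straightforward, and then verifies the support condition exactly as you do, by observing $D^T_{\iota_a}(\langle a,\sigma\rangle)=\langle a,\sigma\rangle$; your additional remarks on transitivity via a common superset $c$ are a welcome elaboration of what the paper leaves implicit.
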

\begin{proof}
 In order to verify that $(D^T_X,<_{D^T_X})$ is a linear order one needs the implication
 \begin{equation*}
  T_{|\iota_a^{a\cup b}|}(\sigma)=T_{|\iota_b^{a\cup b}|}(\tau)\quad\Rightarrow\quad\langle a,\sigma\rangle=\langle b,\tau\rangle.
 \end{equation*}
 The naturality of $\supp^T$ and the condition $\supp^T_{|a|}(\sigma)=|a|$ imply
 \begin{multline*}
[\en_{a\cup b}]^{<\omega}\circ\supp^T_{|a\cup b|}\circ T_{|\iota_a^{a\cup b}|}(\sigma)=\\
=[\en_{a\cup b}]^{<\omega}\circ[|\iota_a^{a\cup b}|]^{<\omega}\circ\supp^T_{|a|}(\sigma)=[\iota_a^{a\cup b}\circ\en_a]^{<\omega}(|a|)=a.
\end{multline*}
As $a$ is determined by $T_{|\iota_a^{a\cup b}|}(\sigma)$, the assumption $T_{|\iota_a^{a\cup b}|}(\sigma)=T_{|\iota_b^{a\cup b}|}(\tau)$ yields $a=b$. Then $|\iota_a^{a\cup b}|=|\iota_b^{a\cup b}|$ is the identity on $|a|=|a\cup b|=|b|$, and we also get $\sigma=\tau$. Based on this fact it is straightforward to verify that $D^T$ is an endofunctor of linear orders and that $\supp^{D^T}:D^T\Rightarrow[\cdot]^{<\omega}$ is a natural transformation. It remains to show that $\supp^{D^T}$ computes supports: Observe that $\langle a,\sigma\rangle\in D^T_X$ implies $\langle a,\sigma\rangle\in D^T_a$. Writing $\iota_{\langle a,\sigma\rangle}:\supp^{D^T}_X(\langle a,\sigma\rangle)=a\hookrightarrow X$ for the inclusion we have
\begin{equation*}
 D^T_{\iota_{\langle a,\sigma\rangle}}(\langle a,\sigma\rangle)=\langle [\iota_{\langle a,\sigma\rangle}]^{<\omega}(a),\sigma\rangle=\langle a,\sigma\rangle,
\end{equation*}
which confirms that $\langle a,\sigma\rangle$ lies in the range of $D^T_{\iota_{\langle a,\sigma\rangle}}$.
\end{proof}

Let us show that we have reconstructed the original dilator:

\begin{proposition}[$\prs$]\label{prop:dilator-reconstruct}
 For any class-sized prae-dilator $(T,\supp^T)$ we can construct a natural equivalence $\eta^T:D^{T\restriction\mathbb N}\Rightarrow T$ with $\supp^T_X\circ\eta^T_X=\supp^{D^{T\restriction\mathbb N}}_X$.
\end{proposition}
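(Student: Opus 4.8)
The plan is to define the components of $\eta^T$ by
\begin{equation*}
 \eta^T_X(\langle a,\sigma\rangle):=T_{\iota_a^X\circ\en_a}(\sigma)\in T_X,
\end{equation*}
which is the assignment suggested in the paragraph preceding Definition~\ref{def:reconstruct-dilators}: here $\iota_a^X:a\hookrightarrow X$ is the inclusion and $\en_a:|a|\to a$ the enumeration, so that $\iota_a^X\circ\en_a:|a|\to X$ is an order embedding whose domain is a natural number, along which $\sigma\in T_{|a|}$ can be pushed forward. This is visibly a primitive recursive set function of $T$ and $X$, so that the assertion of the proposition becomes a genuine $\prs$-theorem once its components are checked (read, officially, as a schema over primitive recursive families of class functions). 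I would then verify, in turn: that $\eta^T$ respects supports, that each $\eta^T_X$ is an order isomorphism, and that $\eta^T$ is natural.

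For the support identity $\supp^T_X\circ\eta^T_X=\supp^{D^{T\restriction\mathbb N}}_X$, naturality of $\supp^T$ gives $\supp^T_X(T_{\iota_a^X\circ\en_a}(\sigma))=[\iota_a^X\circ\en_a]^{<\omega}(\supp^T_{|a|}(\sigma))$, and the membership condition $\supp^T_{|a|}(\sigma)=|a|$ built into $D^{T\restriction\mathbb N}_X$ turns the right-hand side into $[\iota_a^X\circ\en_a]^{<\omega}(|a|)=a=\supp^{D^{T\restriction\mathbb N}}_X(\langle a,\sigma\rangle)$. For surjectivity of $\eta^T_X$ I would, given $\sigma\in T_X$, put $a:=\supp^T_X(\sigma)$ and invoke the support condition of Definition~\ref{def:prae-dilator} to obtain $\bar\sigma\in T_a$ with $T_{\iota_a^X}(\bar\sigma)=\sigma$; naturality of $\supp^T$ then forces $\supp^T_a(\bar\sigma)=\supp^T_X(\sigma)=a$, so that $\tilde\sigma:=T_{\en_a^{-1}}(\bar\sigma)\in T_{|a|}$ satisfies $\supp^T_{|a|}(\tilde\sigma)=|a|$, whence $\langle a,\tilde\sigma\rangle\in D^{T\restriction\mathbb N}_X$ and $\eta^T_X(\langle a,\tilde\sigma\rangle)=T_{\iota_a^X}(T_{\en_a}(\tilde\sigma))=T_{\iota_a^X}(\bar\sigma)=\sigma$.

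To see that $\eta^T_X$ is an order embedding---and hence, with surjectivity, an order isomorphism, which in particular takes care of injectivity---I would abbreviate $c:=a\cup b$ and, using $\iota_a^X=\iota_c^X\circ\iota_a^c$ together with the functoriality of $|\cdot|$, rewrite
\begin{equation*}
 \eta^T_X(\langle a,\sigma\rangle)=T_{\iota_c^X\circ\en_c}\bigl(T_{|\iota_a^c|}(\sigma)\bigr),\qquad\eta^T_X(\langle b,\tau\rangle)=T_{\iota_c^X\circ\en_c}\bigl(T_{|\iota_b^c|}(\tau)\bigr).
\end{equation*}
Since the functor of a prae-dilator sends order embeddings to order embeddings---this follows from the support conditions; equivalently, from the preservation of direct limits, cf.\ the discussion after Definition~\ref{def:prae-dilator}---the map $T_{\iota_c^X\circ\en_c}$ both preserves and reflects $<$, so the inequality $T_{|\iota_a^c|}(\sigma)<_{T_{|c|}}T_{|\iota_b^c|}(\tau)$ that by definition expresses $\langle a,\sigma\rangle<_{D^{T\restriction\mathbb N}_X}\langle b,\tau\rangle$ is equivalent to $\eta^T_X(\langle a,\sigma\rangle)<_{T_X}\eta^T_X(\langle b,\tau\rangle)$. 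I expect this passage between the two orderings to be the main point requiring care, since it is the one place where one genuinely uses that the functor $T$ respects orderings rather than merely underlying sets.

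Finally, for naturality, given an order embedding $f:X\to Y$ one computes $\eta^T_Y(D^{T\restriction\mathbb N}_f(\langle a,\sigma\rangle))=T_{\iota_{[f]^{<\omega}(a)}^Y\circ\en_{[f]^{<\omega}(a)}}(\sigma)$ on the one hand and $T_f(\eta^T_X(\langle a,\sigma\rangle))=T_{f\circ\iota_a^X\circ\en_a}(\sigma)$ on the other, and these coincide because $\iota_{[f]^{<\omega}(a)}^Y\circ\en_{[f]^{<\omega}(a)}$ and $f\circ\iota_a^X\circ\en_a$ are two order embeddings from $|a|=|[f]^{<\omega}(a)|$ into $Y$ with the same image $[f]^{<\omega}(a)$, hence equal by uniqueness of the isomorphism between the finite linear orders $|a|$ and $[f]^{<\omega}(a)$. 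Assembling these four verifications exhibits $\eta^T$ as the desired natural equivalence.
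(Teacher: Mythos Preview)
Your proposal is correct and follows essentially the same route as the paper: the same definition of $\eta^T_X$, the same use of naturality of $\supp^T$ for the support identity and for surjectivity, and the same reduction of the order comparison to $T_{|a\cup b|}$ via the common embedding $\iota_{a\cup b}^X\circ\en_{a\cup b}$. One small remark: your parenthetical that $T$ sends embeddings to embeddings ``follows from the support conditions'' is unnecessary---in this paper the morphisms of the ambient category are already order embeddings, so this is immediate from functoriality; the paper accordingly only checks that $\eta^T_X$ is order-preserving and lets surjectivity plus linearity do the rest.
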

\begin{proof}
 We make the above intuition official and set
 \begin{equation*}
 \eta^T_X(\langle a,\sigma\rangle):=T_{\iota_a^X\circ\en_a}(\sigma).                                            
 \end{equation*}
 Concerning the formalization in $\prs$, note that a primitive recursive definition of~$T$ is readily transformed into a primitive recursive definition of $\eta^T$. We verify that $\eta^T_X:D^{T\restriction\mathbb N}_X\rightarrow T_X$ is order preserving: Assume that we have $\langle a,\sigma\rangle<_{D^{T\restriction\mathbb N}_X}\langle b,\tau\rangle$ and thus $T_{|\iota_a^{a\cup b}|}(\sigma)<_{T_{|a\cup b|}}T_{|\iota_b^{a\cup b}|}(\tau)$. In view of
 \begin{equation*}
  \iota_a^X\circ\en_a=\iota_{a\cup b}^X\circ\iota_a^{a\cup b}\circ\en_a=\iota_{a\cup b}^X\circ\en_{a\cup b}\circ|\iota_a^{a\cup b}|
 \end{equation*}
we obtain the desired inequality
\begin{multline*}
 \eta^{T}_X(\langle a,\sigma\rangle)=T_{\iota_a^X\circ\en_a}(\sigma)=T_{\iota_{a\cup b}^X\circ\en_{a\cup b}}\circ T_{|\iota_a^{a\cup b}|}(\sigma)<_{T_X}{}\\
{}<_{T_X} T_{\iota_{a\cup b}^X\circ\en_{a\cup b}}\circ T_{|\iota_b^{a\cup b}|}(\tau)=T_{\iota_b^X\circ\en_b}(\tau)=\eta^T_X(\langle b,\tau\rangle).
\end{multline*}
To establish naturality we consider $f:X\rightarrow Y$ and observe
\begin{equation*}
 f\circ\iota_a^X\circ\en_a=\iota_{[f]^{<\omega}(a)}^Y\circ(f\!\restriction\!a)\circ\en_a=\iota_{[f]^{<\omega}(a)}^Y\circ\en_{[f]^{<\omega}(a)}.
\end{equation*}
This does indeed yield
\begin{multline*}
 T_f\circ\eta^T_X(\langle a,\sigma\rangle)=T_{f\circ\iota_a^X\circ\en_a}(\sigma)=T_{\iota_{[f]^{<\omega}(a)}^Y\circ\en_{[f]^{<\omega}(a)}}(\sigma)=\\
=\eta^T_Y(\langle[f]^{<\omega}(a),\sigma\rangle)=\eta^T_Y\circ D^{T\restriction\mathbb N}_f(\langle a,\sigma\rangle).
\end{multline*}
Next, we show that the functions $\eta^T_X:D^{T\restriction\mathbb N}_X\rightarrow T_X$ are surjective: By the definition of prae-dilator any $\sigma\in T_X$ lies in the range of $T_{\iota_a^X}$, for $a:=\supp^T_X(\sigma)$. Since the function $\en_a:|a|\rightarrow a$ is an isomorphism we obtain a $\sigma_0\in T_{|a|}$ with $\sigma=T_{\iota_a^X\circ\en_a}(\sigma_0)$. To conclude $\sigma\in\rng(\eta^T_X)$ it remains to verify $\langle a,\sigma_0\rangle\in D^{T\restriction\mathbb N}_X$. The crucial condition $\supp^{T\restriction\mathbb N}_{|a|}(\sigma_0)=|a|$ holds in view of
\begin{equation*}
 [\iota_a^X\circ\en_a]^{<\omega}\circ\supp^T_{|a|}(\sigma_0)=\supp^T_X\circ T_{\iota_a^X\circ\en_a}(\sigma_0)=\supp^T_X(\sigma)=a.
\end{equation*}
So far we have shown that $\eta^T$ is a natural isomorphism. Finally, for $\langle a,\sigma\rangle\in D^{T\restriction\mathbb N}_X$ the condition $\supp^{T\restriction\mathbb N}_{|a|}(\sigma)=|a|$ implies
\begin{multline*}
 \supp^T_X\circ\eta^T_X(\langle a,\sigma\rangle)=\supp^T_X\circ T_{\iota_a^X\circ\en_a}(\sigma)=[\iota_a^X\circ\en_a]^{<\omega}\circ\supp^T_{|a|}(\sigma)=\\
=[\iota_a^X\circ\en_a]^{<\omega}(|a|)=a=\supp^{D^{T\restriction\mathbb N}}_X(\langle a,\sigma\rangle),
\end{multline*}
as was promised in the proposition.
\end{proof}

Later we will also need the following:

\begin{lemma}[$\prs$]\label{lem:set-sized-dils-isomorphic}
 Consider set-sized prae-dilators $S$ and $T$. Given a natural equivalence $\eta^0:S\Rightarrow T$ with $\supp^T_n\circ\eta^0_n=\supp^S_n$, we can construct a natural equivalence $\eta:D^S\Rightarrow D^T$ with $\supp^{D^T}_X\circ\eta_X=\supp^{D^S}_X$.
\end{lemma}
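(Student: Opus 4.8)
The plan is to transport each pair fibrewise along $\eta^0$. Concretely, for a linear order $X$ and $\langle a,\sigma\rangle\in D^S_X$ I would set
\[
 \eta_X(\langle a,\sigma\rangle):=\langle a,\eta^0_{|a|}(\sigma)\rangle .
\]
First I would check that this is well-defined, i.e.\ that $\langle a,\eta^0_{|a|}(\sigma)\rangle\in D^T_X$. Here $a\in[X]^{<\omega}$ and $\eta^0_{|a|}(\sigma)\in T_{|a|}$ are clear, and the only substantive point is the minimality condition $\supp^T_{|a|}(\eta^0_{|a|}(\sigma))=|a|$. This is exactly where the hypothesis on $\eta^0$ enters: by $\supp^T_{|a|}\circ\eta^0_{|a|}=\supp^S_{|a|}$ together with the assumption $\supp^S_{|a|}(\sigma)=|a|$ (which holds because $\langle a,\sigma\rangle\in D^S_X$) we get $\supp^T_{|a|}(\eta^0_{|a|}(\sigma))=\supp^S_{|a|}(\sigma)=|a|$.

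Next I would verify that $\eta_X$ is an order embedding. Given $\langle a,\sigma\rangle,\langle b,\tau\rangle\in D^S_X$, naturality of $\eta^0$ applied to the morphisms $|\iota_a^{a\cup b}|\colon|a|\to|a\cup b|$ and $|\iota_b^{a\cup b}|\colon|b|\to|a\cup b|$ gives $\eta^0_{|a\cup b|}\circ S_{|\iota_a^{a\cup b}|}=T_{|\iota_a^{a\cup b}|}\circ\eta^0_{|a|}$, and similarly for $b$. Since $\eta^0_{|a\cup b|}$ is an isomorphism of linear orders it both preserves and reflects $<$, whence
\[
 S_{|\iota_a^{a\cup b}|}(\sigma)<_{S_{|a\cup b|}}S_{|\iota_b^{a\cup b}|}(\tau)\iff T_{|\iota_a^{a\cup b}|}(\eta^0_{|a|}(\sigma))<_{T_{|a\cup b|}}T_{|\iota_b^{a\cup b}|}(\eta^0_{|b|}(\tau)),
\]
i.e.\ $\langle a,\sigma\rangle<_{D^S_X}\langle b,\tau\rangle$ if and only if $\eta_X(\langle a,\sigma\rangle)<_{D^T_X}\eta_X(\langle b,\tau\rangle)$. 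For naturality in $X$, given an embedding $f\colon X\to Y$, I would use $|[f]^{<\omega}(a)|=|a|$ to observe that both $D^T_f\circ\eta_X$ and $\eta_Y\circ D^S_f$ send $\langle a,\sigma\rangle$ to $\langle[f]^{<\omega}(a),\eta^0_{|a|}(\sigma)\rangle$.

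To see that $\eta$ is a natural equivalence rather than merely a natural embedding, I would note that the inverse natural equivalence $(\eta^0)^{-1}\colon T\Rightarrow S$ again satisfies the support hypothesis: composing $\supp^T_n\circ\eta^0_n=\supp^S_n$ on the right with $(\eta^0_n)^{-1}$ yields $\supp^T_n=\supp^S_n\circ(\eta^0_n)^{-1}$. Running the construction above for $(\eta^0)^{-1}$ produces maps $D^T_X\to D^S_X$ that are fibrewise inverse to the $\eta_X$, so each $\eta_X$ is bijective. The promised identity $\supp^{D^T}_X\circ\eta_X=\supp^{D^S}_X$ is then immediate, since by Definition~\ref{def:reconstruct-dilators} both sides simply return the first component $a$. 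Formalization in $\prs$ causes no difficulty: a primitive recursive presentation of $S$, $T$ and $\eta^0$ directly yields one of $\eta$. There is no real obstacle here; the one thing to keep track of is that the support condition on $\eta^0$ is used in two places --- to keep the minimality clause $\supp^T_{|a|}(\cdot)=|a|$ valid so that $\eta_X$ lands in $D^T_X$, and (in the dualised form $\supp^S_n\circ(\eta^0_n)^{-1}=\supp^T_n$) to legitimise the inverse construction --- while everything else is a direct transcription of the arguments in Lemma~\ref{lem:set-prae-dilator-to-class} and Proposition~\ref{prop:dilator-reconstruct}.
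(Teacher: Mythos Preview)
Your construction $\eta_X(\langle a,\sigma\rangle)=\langle a,\eta^0_{|a|}(\sigma)\rangle$ is exactly the one the paper uses, and the paper simply declares the verification ``straightforward'' without spelling out any of the details you give. Your checks (well-definedness via the support hypothesis, order preservation via naturality of $\eta^0$, naturality in $X$, invertibility, and the support identity) are all correct, so this is the same approach with the details filled in.
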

\begin{proof}
 It is straightforward to verify that
 \begin{equation*}
  \eta_X(\langle a,\sigma\rangle):=\langle a,\eta^0_{|a|}(\sigma)\rangle
 \end{equation*}
 defines the desired family of functions.
\end{proof}

Whether a given set is a set-sized prae-dilator can be decided by a primitive recursive set function. In contrast, the notion of dilator retains its logical complexity:

\begin{definition}[$\prs$]
 A set-sized prae-dilator $(T,\supp^T)$ is called a set-sized dilator if the order $(D^T_X,<_{D^T_X})$ is well-founded for any well-order $X$.
\end{definition}

Proposition~\ref{prop:dilator-reconstruct} does, in particular, tell us that $D^{T\restriction\mathbb N}_X$ is well-founded if $T_X$ is. Together with Lemma~\ref{lem:prae-dilator-restriction-set} we get the following:

\begin{corollary}[$\prs$]
 If $T$ is a class-sized dilator, then its restriction $T\!\restriction\!\mathbb N$ is a set-sized dilator.
\end{corollary}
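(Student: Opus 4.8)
The plan is to reduce the well-foundedness clause for $T\!\restriction\!\mathbb N$ to the corresponding clause for $T$ itself, using the natural equivalence constructed in Proposition~\ref{prop:dilator-reconstruct}. By Lemma~\ref{lem:prae-dilator-restriction-set} we already know that $T\!\restriction\!\mathbb N$ is a set-sized prae-dilator, so the only thing left to verify is that $(D^{T\restriction\mathbb N}_X,<_{D^{T\restriction\mathbb N}_X})$ is well-founded for every well-order $X$, which is precisely the extra condition in the definition of set-sized dilator.

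So fix a well-order $X$. Proposition~\ref{prop:dilator-reconstruct} provides the natural equivalence $\eta^T:D^{T\restriction\mathbb N}\Rightarrow T$, whose component $\eta^T_X:D^{T\restriction\mathbb N}_X\rightarrow T_X$ is an isomorphism of linear orders. Since $T$ is a class-sized dilator, $T_X$ is well-founded. I would then note that a linear-order isomorphism transfers well-foundedness in either direction: both $\eta^T_X$ and its inverse are order preserving, so any infinite descending sequence in $D^{T\restriction\mathbb N}_X$ would be carried to an infinite descending sequence in $T_X$ (and, dually, a nonempty subset of $D^{T\restriction\mathbb N}_X$ without a least element would have as its image a nonempty subset of $T_X$ without a least element). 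Hence $D^{T\restriction\mathbb N}_X$ is well-founded, which is exactly what is needed; combined with Lemma~\ref{lem:prae-dilator-restriction-set} this shows that $T\!\restriction\!\mathbb N$ is a set-sized dilator.

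There is essentially no obstacle in this argument; the only point deserving a moment's care is that it be carried out correctly inside $\prs$, i.e.\ that the chosen formulation of well-foundedness (say, the nonexistence of infinite descending sequences) is genuinely preserved under the bijection $\eta^T_X$. As $\eta^T_X$ and its inverse are both order preserving this is immediate, and the support-compatibility clause of Proposition~\ref{prop:dilator-reconstruct} plays no role here.
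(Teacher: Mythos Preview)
Your argument is correct and matches the paper's own reasoning: the paper likewise invokes Lemma~\ref{lem:prae-dilator-restriction-set} for the prae-dilator clause and then uses the natural equivalence of Proposition~\ref{prop:dilator-reconstruct} to transfer well-foundedness from $T_X$ to $D^{T\restriction\mathbb N}_X$. No additional ideas are needed.
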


The converse is trivial, based on the corresponding result for prae-dilators:

\begin{corollary}[$\prs$]
 If $(T,\supp^T)$ is a set-sized dilator, then $(D^T,\supp^{D^T})$ is a class-sized dilator.
\end{corollary}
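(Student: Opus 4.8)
The plan is to reduce the claim for the class-sized object $(D^T,\supp^{D^T})$ to the defining property of a set-sized dilator, namely that $(D^T_X,<_{D^T_X})$ is well-founded for every well-order $X$. Observe first that by the previous corollary we already know $(D^T,\supp^{D^T})$ is a class-sized prae-dilator, so the only thing left to verify is that $D^T_X$ is well-founded whenever $X$ is a well-order. But that is \emph{literally} the hypothesis that $(T,\supp^T)$ is a set-sized dilator: the definition of set-sized dilator was phrased in exactly these terms, using the reconstructed class functor $D^T$ rather than the finite values $T_n$. Hence the proof is essentially a one-line unfolding of definitions.

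Concretely, I would write: by Lemma~\ref{lem:set-prae-dilator-to-class} the pair $(D^T,\supp^{D^T})$ is a class-sized prae-dilator. Given any well-order $X$, the assumption that $(T,\supp^T)$ is a set-sized dilator yields that $(D^T_X,<_{D^T_X})$ is well-founded. By Definition~\ref{def:prae-dilator} this means that $(D^T,\supp^{D^T})$ is a class-sized dilator, as claimed. One should perhaps remark that the formalization in $\prs$ presents no difficulty, since the maps $(T,X)\mapsto(D^T_X,<_{D^T_X})$ etc.\ were already noted to be primitive recursive set functions, so quantifying over well-orders $X$ and over the set-sized parameter $T$ stays within the expressive power of $\prs$.

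There is no real obstacle here: the genuine content was front-loaded into the \emph{definition} of set-sized dilator, which was deliberately stated in terms of $D^T_X$ precisely so that this corollary would be immediate. The only point requiring a moment's attention is to confirm that the prae-dilator structure has already been established (it has, via Lemma~\ref{lem:set-prae-dilator-to-class}), so that nothing beyond the well-foundedness clause needs to be checked. Thus the proof is simply:

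\begin{proof}
 By Lemma~\ref{lem:set-prae-dilator-to-class} we know that $(D^T,\supp^{D^T})$ is a class-sized prae-dilator. If $X$ is an arbitrary well-order, then $(D^T_X,<_{D^T_X})$ is well-founded, since $(T,\supp^T)$ is a set-sized dilator. By Definition~\ref{def:prae-dilator} it follows that $(D^T,\supp^{D^T})$ is a class-sized dilator.
\end{proof}
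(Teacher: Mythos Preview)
Your proposal is correct and matches the paper's approach exactly: the paper simply remarks that ``the converse is trivial, based on the corresponding result for prae-dilators'' and gives no further proof, which is precisely your unfolding via Lemma~\ref{lem:set-prae-dilator-to-class} together with the definition of set-sized dilator. (One small slip in your discussion: the prae-dilator structure comes from Lemma~\ref{lem:set-prae-dilator-to-class}, not from ``the previous corollary''; your formal proof cites the right result, however.)
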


The previous results show that (prae-)dilators are essentially set-sized objects. As promised, this allows us to express the abstract Bachmann-Howard principle by a single formula (recall that Definition~\ref{def:abstract-bhp} is a schema, because we cannot quantify over all class-sized dilators).

\begin{proposition}[$\prs$]\label{prop:bh-single-instance}
 The following consequence of the abstract Bachmann-Howard principle implies each of its instances: For every set-sized dilator $T$ there is a well-order~$X$ with a Bachmann-Howard collapse $\vartheta:D^T_X\rightarrow X$.
\end{proposition}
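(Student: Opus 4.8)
The plan is to combine the reconstruction result (Proposition~\ref{prop:dilator-reconstruct}) with the observation that Bachmann-Howard collapses can be transported along support-preserving natural isomorphisms. Fix an arbitrary instance of the abstract Bachmann-Howard principle, i.e.\ a (primitive recursive family defining a) class-sized dilator $(T,\supp^T)$. By the corollary stating that the restriction of a class-sized dilator is a set-sized dilator, the object $T\!\restriction\!\mathbb N$ is a set-sized dilator. Applying the hypothesis of the proposition to $T\!\restriction\!\mathbb N$, we obtain a well-order $X$ together with a Bachmann-Howard collapse $\vartheta:D^{T\restriction\mathbb N}_X\rightarrow X$, in the sense of Definition~\ref{def:bachmann-howard-collapse} applied to the class-sized prae-dilator $(D^{T\restriction\mathbb N},\supp^{D^{T\restriction\mathbb N}})$ of Lemma~\ref{lem:set-prae-dilator-to-class}. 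Since $X$ is a well-order it is in particular well-founded, so it only remains to turn $\vartheta$ into a Bachmann-Howard collapse $T_X\rightarrow X$.

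To do so I would transport $\vartheta$ along the natural isomorphism of Proposition~\ref{prop:dilator-reconstruct}: this gives an order isomorphism $\eta^T_X:D^{T\restriction\mathbb N}_X\rightarrow T_X$ with $\supp^T_X\circ\eta^T_X=\supp^{D^{T\restriction\mathbb N}}_X$. As $\eta^T_X$ is a bijection of sets its inverse $(\eta^T_X)^{-1}$ exists, and I put $\bar\vartheta:=\vartheta\circ(\eta^T_X)^{-1}:T_X\rightarrow X$. Given $\sigma',\tau'\in T_X$, write $\sigma:=(\eta^T_X)^{-1}(\sigma')$ and $\tau:=(\eta^T_X)^{-1}(\tau')$, so that $\bar\vartheta(\sigma')=\vartheta(\sigma)$, $\bar\vartheta(\tau')=\vartheta(\tau)$, and moreover $\sigma'<_{T_X}\tau'$ iff $\sigma<_{D^{T\restriction\mathbb N}_X}\tau$ (since $\eta^T_X$ is order preserving) and $\supp^T_X(\sigma')=\supp^{D^{T\restriction\mathbb N}}_X(\sigma)$, $\supp^T_X(\tau')=\supp^{D^{T\restriction\mathbb N}}_X(\tau)$ (by the support clause). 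Reading off Definition~\ref{def:bachmann-howard-collapse}, conditions~(i) and~(ii) for $\bar\vartheta$ then reduce verbatim to conditions~(i) and~(ii) for $\vartheta$. Hence $\bar\vartheta$ is a Bachmann-Howard collapse, so the well-order $X$ is a well-founded Bachmann-Howard fixed point of $T$, which is exactly the given instance of the abstract Bachmann-Howard principle.

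A remark on the formalization: the statement is a schema, with one instance for each primitive recursive family of class functions that $\prs$ recognises as a dilator. Since a primitive recursive definition of $T$ is turned into one of $\eta^T$ (hence of $(\eta^T_X)^{-1}$ and of $\bar\vartheta$) by Proposition~\ref{prop:dilator-reconstruct}, the passage from the single set-theoretic formula in the proposition to each instance of the schema is uniform and takes place in $\prs$. I do not expect a genuine obstacle: the substance is entirely in Proposition~\ref{prop:dilator-reconstruct}, and the rest is an unravelling of Definition~\ref{def:bachmann-howard-collapse}. The one point that deserves care is that the support-preservation clause $\supp^T_X\circ\eta^T_X=\supp^{D^{T\restriction\mathbb N}}_X$ is essential here — without it the transport of the side condition $\supp^T_X(\sigma)\lef_X\vartheta(\tau)$ would break down — so one must keep track of which object plays the role of ``$T$'' in the hypothesis (namely $T\!\restriction\!\mathbb N$, whose reconstruction $D^{T\restriction\mathbb N}$ is naturally and support-compatibly isomorphic to the original class-sized $T$).
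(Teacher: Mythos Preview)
Your argument for the direction ``the single statement implies each schematic instance'' is correct and matches the paper's proof essentially verbatim: restrict the class-sized dilator to $T\!\restriction\!\mathbb N$, apply the hypothesis to obtain a collapse on $D^{T\restriction\mathbb N}_X$, and transport it along $(\eta^T_X)^{-1}$ using the support compatibility of Proposition~\ref{prop:dilator-reconstruct}. Your emphasis on the support-preservation clause is exactly right.

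There is one omission. The proposition also asserts that the displayed statement \emph{is} a consequence of the abstract Bachmann-Howard principle, and the paper's proof begins by checking this. The argument is short but not vacuous: given a set-sized dilator $T$, one must note that $(D^T,\supp^{D^T})$ is a class-sized dilator (Lemma~\ref{lem:set-prae-dilator-to-class} gives the prae-dilator part, and the definition of set-sized dilator supplies well-foundedness of each $D^T_X$), so that the schematic Bachmann-Howard principle applied to the primitive recursive family $X\mapsto D^T_X$ yields the desired collapse. You should add this half-paragraph; otherwise the proof is complete.
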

\begin{proof}
 Let us first establish that the given statement follows from the Bachmann-Howard principle: If $T$ is a set-sized dilator, then $D^T$ is a class-sized dilator. Using the abstract Bachmann-Howard principle (for the primitive recursive family of functions $X\mapsto D^T_X$ with parameter $T$) we obtain the desired collapse $\vartheta:D^T_X\rightarrow X$ for a well-order $X$. Conversely, we deduce an arbitrary instance of the abstract Bachmann-Howard principle: If $T$ is a class-sized dilator, then $T\!\restriction\!\mathbb N$ is a set-sized dilator. Thus the statement from the proposition yields a well-order $X$ with a Bachmann-Howard collapse $\vartheta:D^{T\restriction\mathbb N}_X\rightarrow X$. Proposition~\ref{prop:dilator-reconstruct} provides an isomorphism $\eta^T_X:D^{T\restriction\mathbb N}_X\rightarrow T_X$ with $\supp^T_X\circ\eta^T_X=\supp^{D^{T\restriction\mathbb N}}_X$. One can check that
 \begin{equation*}
  \vartheta\circ(\eta^T_X)^{-1}:T_X\rightarrow X
 \end{equation*}
 is a Bachmann-Howard collapse as well.
\end{proof}

As promised, we can also deduce that the restriction to primitive recursive set functions does not mean a loss of generality:

\begin{remark}
 If one works in a stronger theory, then one may want to consider a dilator $T$ that is not given by a primitive recursive set function. The corresponding dilator $D^{T\restriction\mathbb N}$ will still be primitive recursive (even though stronger separation axioms may be needed to show that the parameter $T\!\restriction\!\mathbb N$ exists as a set). Furthermore, the dilators $T$ and $D^{T\restriction\mathbb N}$ will still be equivalent (even though the equivalence $\eta^T:D^{T\restriction\mathbb N}\Rightarrow T$ may no longer be primitive recursive). As in the previous proposition, a Bachmann-Howard collapse for $D^{T\restriction\mathbb N}$ can be transformed into a Bachmann-Howard collapse for $T$. This shows that the abstract Bachmann-Howard principle does not become stronger if we admit dilators which are not primitive recursive.
\end{remark}

While any dilator is equivalent to a primitive recursive one, the statement that ``all dilators are primitive recursive set functions'' can also be very misleading:

\begin{remark}\label{rmk:dilators-order-types}
 For each dilator $T$ we can consider the corresponding function
 \begin{equation*}
  \alpha\mapsto\otyp(T_\alpha)
 \end{equation*}
of  order types. It is important to realize that this function is not primitive recursive in general. By induction on the primitive recursive definition of $F$ we find a number $n$ such that $\vec x\in\mathbb V_\beta$ implies $F(\vec x)\in\mathbb V_{\varphi_n(\beta)}$. Thus $\alpha\mapsto\varphi_\alpha(0)$ cannot be a primitive recursive set function. On the other hand one can construct primitive recursive notation systems~$T_X$ such that $\otyp(X)=\alpha$ implies $\otyp(T_X)=\varphi_\alpha0$ (see~\mbox{\cite{marcone-montalban,rathjen-weiermann-atr}}). To explain this phenomenon we recall that the order type of a given well-order cannot be computed by a primitive recursive set function (axiom beta is not provable in~$\prs$ and not even in Kripke-Platek set theory). For this reason it is important to admit arbitrary well-orders as values of dilators. If one only allowed ordinals as values (as in the formulation of Girard~\cite[Definition~2.3.1]{girard-pi2}), then it would not be true that any dilator is equivalent to a primitive recursive one.
\end{remark}

In the second half of this section we show that (prae-)dilators can be formalized in the subsystem $\rca$ of second-order arithmetic. This is due to Girard~\cite{girard-pi2}, but we know of no explicit presentation. It is well-known that finite sets of natural numbers and functions between such sets can be coded by natural numbers. Basic relations and operations on the codes are primitive recursive (in the usual number-theoretic sense). This allows us to express the following by an arithmetical formula (with parameters $T^0,T^1,\supp^T\subseteq\mathbb N$):

\begin{definition}[$\rca$]\label{def:coded-prae-dilator}
 A coded prae-dilator consists of
 \begin{enumerate}[label=(\roman*)]
  \item a functor $T$ from natural numbers to linear orders with fields $T_n\subseteq\mathbb N$, represented by the sets
  \begin{align*}
  T^0&=\{\langle 0,n,\sigma\rangle\,|\,\sigma\in T_n\}\cup\{\langle 1,n,\sigma,\tau\rangle\,|\,\sigma<_{T_n}\tau\},\\
  T^1&=\{\langle f,\sigma,\tau\rangle\,|\,T_f(\sigma)=\tau\},
  \end{align*}
  \item a natural transformation $\supp^T:T\Rightarrow[\cdot]^{<\omega}$ that computes supports (in the sense of Definition~\ref{def:set-prae-dilator}), represented by the set
  \begin{equation*}
  \supp^T=\{\langle n,\sigma,a\rangle\,|\,\supp^T_n(\sigma)=a\}.
  \end{equation*}
 \end{enumerate}
\end{definition}

When we work in $\rca$ we can only refer to the sets $T^0,T^1,\supp^T$. In this context we use $\sigma\in T_n$ as an abbreviation for the $\Delta^0_1$-formula~$\langle 0,n,\sigma\rangle\in T^0$. The same applies to the expressions $\sigma<_{T_n}\tau$, $T_f(\sigma)=\tau$ and $\supp^T_n(\sigma)=a$. Formulas of second-order arithmetic have a natural translation into the language of set theory (cf.~\cite[Theorem~VII.3.9]{simpson09}). If $T^0$ and $n\mapsto T_n$ are related as in the definition, then the set-theoretic translation of the second-order formula $\sigma\in T_n\equiv\langle 0,n,\sigma\rangle\in T^0$ is equivalent to the set-theoretic formula $\sigma\in T_n$ (even though the two formulas are not literally equal). When we work in $\prs$ we may thus identify coded prae-dilators and those set-sized prae-dilators $(T,\supp^T)$ with the property that the field of each linear order~$T_n$ is a subset of the natural numbers.

Our next goal is to single out the coded dilators in $\rca$. For this purpose we reconstruct the orders $(D^T_X,<_{D^T_X})$ from Definition~\ref{def:reconstruct-dilators}, for each order $(X,<_X)$ with field~$X\subseteq\mathbb N$: The set
\begin{equation*}
 D^T_X=\{\langle a,\sigma\rangle\,|\,\text{``$a$ codes a finite subset of $X$''}\land\sigma\in T_{|a|}\land\supp^T_{|a|}(\sigma)=|a|\}\subseteq\mathbb N
\end{equation*}
exists by $\Delta^0_1$-comprehension. Recall the functions $|\iota_c^d|:|c|\rightarrow|d|$ for finite subsets (in fact suborders) $c,d$ of $X$. It is straightforward to see that the operation $(c,d)\mapsto|\iota_c^d|$ on the codes is primitive recursive relative to $(X,<_X)$. In view of
\begin{align*}
 \langle a,\sigma\rangle<_{D^T_X}\langle b,\tau\rangle&\equiv\exists_{\sigma',\tau'}(T_{|\iota_a^{a\cup b}|}(\sigma)=\sigma'\land T_{|\iota_b^{a\cup b}|}(\tau)=\tau'\land\sigma'<_{T_{|a\cup b|}}\tau')\\
 &\equiv\forall_{\sigma',\tau'}(T_{|\iota_a^{a\cup b}|}(\sigma)=\sigma'\land T_{|\iota_b^{a\cup b}|}(\tau)=\tau'\rightarrow\sigma'<_{T_{|a\cup b|}}\tau')
\end{align*}
the relation $<_{D^T_X}$ can be defined by $\Delta^0_1$-comprehension as well. As in the proof of Lemma~\ref{lem:set-prae-dilator-to-class} one can show the following:

\begin{lemma}[$\rca$]
 If $T$ is a coded prae-dilator and $(X,<_X)$ is a linear order, then $(D^T_X,<_{D^T_X})$ is a linear order as well.
\end{lemma}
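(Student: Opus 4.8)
The plan is to follow the first part of the proof of Lemma~\ref{lem:set-prae-dilator-to-class}, verifying along the way that every step goes through in $\rca$. We already know that $D^T_X$ and the relation $<_{D^T_X}$ exist by $\Delta^0_1$-comprehension, so it suffices to check that $<_{D^T_X}$ is a well-defined relation on $D^T_X$ which is irreflexive, transitive and trichotomous. The ingredients are all finitary: each $T_n$ is a linear order and each $T_f$ an order embedding (so it preserves and reflects $<$), since $T$ is a functor into the category of linear orders; the composition and identity laws of $T$ and the naturality of $\supp^T$ are part of the hypothesis that $T$ is a coded prae-dilator, and are arithmetical conditions on $T^0,T^1,\supp^T$; and the operation $(c,d)\mapsto|\iota_c^d|$ on codes of finite suborders of $X$ is primitive recursive relative to $(X,<_X)$, as already noted. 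Hence the argument becomes the $\rca$-translation of a purely combinatorial computation, and the only genuine content is the uniqueness of representations.

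First I would record the key implication
\begin{equation*}
 T_{|\iota_a^{a\cup b}|}(\sigma)=T_{|\iota_b^{a\cup b}|}(\tau)\quad\Longrightarrow\quad\langle a,\sigma\rangle=\langle b,\tau\rangle ,
\end{equation*}
proved exactly as for Lemma~\ref{lem:set-prae-dilator-to-class}: applying $[\en_{a\cup b}]^{<\omega}\circ\supp^T_{|a\cup b|}$ to the common value and using naturality of $\supp^T$ together with the membership condition $\supp^T_{|a|}(\sigma)=|a|$ shows that this composite equals $a$; the symmetric computation gives $b$; so the hypothesis forces $a=b$, after which $|\iota_a^{a\cup b}|=|\iota_b^{a\cup b}|$ is the identity on $|a|$ and functoriality of $T$ yields $\sigma=\tau$. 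In $\rca$ this is a purely arithmetical statement about the codes $a,b,\sigma,\tau$, provable from the axioms packaged into $T^0,T^1,\supp^T$.

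With this in hand, irreflexivity is immediate, since $|\iota_a^a|$ is the identity on $|a|$ and $<_{T_{|a|}}$ is irreflexive; trichotomy follows from linearity of $T_{|a\cup b|}$, because if neither $\langle a,\sigma\rangle<_{D^T_X}\langle b,\tau\rangle$ nor $\langle b,\tau\rangle<_{D^T_X}\langle a,\sigma\rangle$ then $T_{|\iota_a^{a\cup b}|}(\sigma)=T_{|\iota_b^{a\cup b}|}(\tau)$, whence $\langle a,\sigma\rangle=\langle b,\tau\rangle$ by the implication. For transitivity, given $\langle a,\sigma\rangle<_{D^T_X}\langle b,\tau\rangle$ and $\langle b,\tau\rangle<_{D^T_X}\langle c,\rho\rangle$, I would pass to $d:=a\cup b\cup c$: applying the order embeddings $T_{|\iota_{a\cup b}^d|}$ and $T_{|\iota_{b\cup c}^d|}$ to the two given inequalities and using the functoriality identities $T_{|\iota_a^d|}=T_{|\iota_{a\cup b}^d|}\circ T_{|\iota_a^{a\cup b}|}$ and the analogous ones for $b$ and $c$, one gets $T_{|\iota_a^d|}(\sigma)<_{T_{|d|}}T_{|\iota_b^d|}(\tau)<_{T_{|d|}}T_{|\iota_c^d|}(\rho)$; transitivity in $T_{|d|}$ gives $T_{|\iota_a^d|}(\sigma)<_{T_{|d|}}T_{|\iota_c^d|}(\rho)$, and since $T_{|\iota_{a\cup c}^d|}$ is an order embedding it reflects $<$, so $T_{|\iota_a^{a\cup c}|}(\sigma)<_{T_{|a\cup c|}}T_{|\iota_c^{a\cup c}|}(\rho)$, i.e.\ $\langle a,\sigma\rangle<_{D^T_X}\langle c,\rho\rangle$.

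I do not expect a serious obstacle. The only point that requires a moment's care when lowering the base theory is to confirm that the ``categorical'' facts used — functoriality of $T$, naturality of $\supp^T$, and that each $T_f$ is an order embedding and hence preserves and reflects strict order — are genuinely finitary, so that they follow in $\rca$ from the arithmetical formulas defining a coded prae-dilator. Once this is granted, the proof is the verbatim $\rca$-rendering of the relevant part of the proof of Lemma~\ref{lem:set-prae-dilator-to-class}, with no new ideas needed.
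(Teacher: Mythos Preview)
Your proposal is correct and follows exactly the approach the paper intends: the paper's proof of this lemma is simply the sentence ``As in the proof of Lemma~\ref{lem:set-prae-dilator-to-class} one can show the following,'' and you have spelled out that reference, including the uniqueness-of-representations implication and the routine checks of irreflexivity, trichotomy, and transitivity, while noting that all the categorical facts used are finitary and hence available in $\rca$.
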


Coded dilators can now be defined by a $\Pi^1_2$-formula of second-order arithmetic:

\begin{definition}[$\rca$]\label{def:coded-dilator}
 A coded prae-dilator $T$ is a coded dilator if $(D^T_X,<_{D^T_X})$ is well-founded for every well-order $(X,<_X)$ with field $X\subseteq\mathbb N$.
\end{definition}

There is one subtlety when we translate back into set theory: The previous definition does only probe well-orders $(X,<_X)$ with~$X\subseteq\mathbb N$. In a set theory with choice this does not make a difference: Using the definition of well-foundedness in terms of descending sequences, one can show that $D^T_Y$ is well-founded if $D^T_{Y_0}$ is well-founded for all countable suborders $Y_0\subseteq Y$ (due to Girard~\cite[Theorem~2.1.15]{girard-pi2}). All countable orders are covered by Definition~\ref{def:coded-dilator}, since Lemma~\ref{lem:set-prae-dilator-to-class} tells us that $Y_0\cong X\subseteq\mathbb N$ implies $D^T_{Y_0}\cong D^T_X$. In the base theory $\atrs$ of Theorem~\ref{thm:main-abstract} the issue disappears for a rather different reason: This theory includes the axiom of countability, which implies that any set is in bijection with a subset of the natural numbers (axiom beta, which is also included in $\atrs$, is not needed here).

Back in $\rca$, a Bachmann-Howard collapse $\vartheta:D^T_X\rightarrow X$ for an order $X\subseteq\mathbb N$ can be represented by a set $\vartheta\subseteq\mathbb N$. The conditions from Definition~\ref{def:bachmann-howard-collapse} are readily expressed by an arithmetical formula (with the appropriate set parameters). Thus the following amounts to a $\Pi^1_3$-statement of second-order arithmetic:

\begin{definition}[$\rca$]\label{def:so-bh}
The second-order version of the abstract Bachmann-Howard principle is the following statement: For every coded dilator $T$ there is a well-founded Bachmann-Howard fixed point $X$, i.e.~a well-order $X\subseteq\mathbb N$ with a Bachmann-Howard collapse $\vartheta:D^T_X\rightarrow X$.
\end{definition}

 We will speak of the set-theoretic version of the abstract Bachmann-Howard principle in order to refer to Definition~\ref{def:abstract-bhp}. In an appropriate meta theory one can construct prae-dilators with uncountable parameters that only have uncountable Bachmann-Howard fixed points: It is straightforward to show that any Bachmann-Howard collapse must be injective (cf.~\cite[Lemma~2.1.7]{freund-thesis}). Now consider the constant prae-dilator $T_X=Y$ for an uncountable order $Y$. On the other hand, the axiom of countability ensures the following:

\begin{lemma}[$\atrs$]\label{lem:abstract-bhp-set-so}
The second-order version of the abstract Bachmann-Howard principle is equivalent to the set-theoretic version.
\end{lemma}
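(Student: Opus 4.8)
The plan is to prove the equivalence by showing that the set-theoretic version and the second-order version are inter-derivable over $\atrs$, using the axiom of countability to bridge the gap between arbitrary sets and subsets of $\mathbb N$. The core observation is that, by Proposition~\ref{prop:bh-single-instance}, the set-theoretic abstract Bachmann-Howard principle is equivalent (already over $\prs$) to the single statement that every \emph{set-sized} dilator $T$ admits a well-order $X$ with a Bachmann-Howard collapse $\vartheta:D^T_X\to X$. So it suffices to relate this set-sized formulation to the coded (second-order) formulation of Definition~\ref{def:so-bh}.

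\medskip

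First I would prove the direction from the second-order version to the set-theoretic version. Assume the second-order version and let $T$ be an arbitrary set-sized dilator; by Proposition~\ref{prop:bh-single-instance} it is enough to produce a Bachmann-Howard collapse into some well-order for the class-sized dilator $D^T$. The axiom of countability gives a bijection $g:T\!\restriction\!\mathbb N\to T'$ onto a set-sized prae-dilator $T'$ whose fields $T'_n$ are subsets of $\mathbb N$ — concretely, transport each linear order $T_n$ along a bijection with a subset of $\mathbb N$ and push forward the morphism maps and the support functions. One checks (this is routine, in the spirit of Lemma~\ref{lem:set-sized-dils-isomorphic}) that $T'$ is a coded prae-dilator and that $g$ induces a natural equivalence $D^{T\restriction\mathbb N}\cong D^{T'}$ preserving supports; in particular $T'$ is a coded dilator because $D^{T'}_X$ is well-founded whenever $D^{T\restriction\mathbb N}_X\cong D^T_X$ is (here again use countability to reduce an arbitrary well-order to one on a subset of $\mathbb N$, as in the discussion after Definition~\ref{def:coded-dilator}). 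Applying the second-order version to $T'$ yields a well-order $X\subseteq\mathbb N$ and a collapse $\vartheta:D^{T'}_X\to X$; composing with the support-preserving isomorphism $D^{T}_X\to D^{T'}_X$ gives a Bachmann-Howard collapse for $D^T$, which by Proposition~\ref{prop:bh-single-instance} delivers the desired instance.

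\medskip

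For the converse, assume the set-theoretic version — equivalently, the single-instance statement from Proposition~\ref{prop:bh-single-instance} — and let $T$ be a coded dilator. Then $T$ is in particular a set-sized dilator (by the identification of coded prae-dilators with set-sized ones whose fields lie in $\mathbb N$, made just before Definition~\ref{def:coded-dilator}, together with the fact that the coded-dilator condition is literally the set-sized-dilator condition restricted to orders on subsets of $\mathbb N$; one must check it implies the unrestricted condition, which is exactly the countability reduction again). So we obtain a well-order $X$ and a Bachmann-Howard collapse $\vartheta:D^T_X\to X$. Using the axiom of countability, fix a bijection $h$ from $X$ onto a subset $X'\subseteq\mathbb N$ and transport $<_X$ to a well-order $<_{X'}$; then $[h]^{<\omega}$ induces an isomorphism $D^T_X\cong D^T_{X'}$ commuting with supports, and $h\circ\vartheta\circ([h]^{<\omega})^{-1}:D^T_{X'}\to X'$ is a Bachmann-Howard collapse on orders with field in $\mathbb N$. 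This is precisely a witness for the second-order version at $T$.

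\medskip

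I expect the main obstacle to be bookkeeping rather than conceptual: one must verify carefully that transporting a (prae-)dilator along a family of bijections $T_n\cong T'_n$ with subsets of $\mathbb N$ really yields a coded prae-dilator — i.e.\ that functoriality, naturality of $\supp$, and the support-computing condition are all preserved — and that the induced map $D^{T}\Rightarrow D^{T'}$ is a support-preserving natural isomorphism, so well-foundedness transfers in both directions. The other delicate point is the reduction from ``well-founded on all well-orders'' to ``well-founded on well-orders with field $\subseteq\mathbb N$'': over $\atrs$ this is immediate from the axiom of countability, but it is worth stating explicitly, since it is the only place the base theory's strength beyond $\prs$ is used. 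Everything else is an application of Proposition~\ref{prop:bh-single-instance}, Proposition~\ref{prop:dilator-reconstruct}, and Lemma~\ref{lem:set-sized-dils-isomorphic}.
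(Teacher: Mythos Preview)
Your proposal is correct and follows essentially the same approach as the paper: both directions hinge on Proposition~\ref{prop:bh-single-instance} to reduce to set-sized dilators, on Lemma~\ref{lem:set-sized-dils-isomorphic} (or its analogue) to transport along support-preserving equivalences, and on the axiom of countability to code fields and well-orders into subsets of~$\mathbb N$. One small slip: since your $T$ in the first direction is already a set-sized dilator, writing $T\!\restriction\!\mathbb N$ is redundant---just work with $T$ itself and construct the coded prae-dilator $T'$ directly from it, exactly as the paper does.
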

\begin{proof}
 To deduce the second-order version from the set-theoretic version we consider a coded dilator $T$. In the presence of countability we may view $T$ as a set-sized dilator, as discussed above. Then the set-theoretic version of the Bachmann-Howard principle yields a Bachmann-Howard collapse $\vartheta:D^T_X\rightarrow X$ for some well-order $X$. The axiom of countability yields a well-order $Y\cong X$ with field $Y\subseteq\mathbb N$. It is straightforward to transform $\vartheta$ into a Bachmann-Howard collapse of $D^T_Y$ into $Y$, as demanded by the second-order version of the abstract Bachmann-Howard principle. To deduce the set-theoretic version from the second-order version we invoke Proposition~\ref{prop:bh-single-instance} and consider a set-sized dilator~$T$. The axiom of countability yields a bijection between $\bigcup\{T_n\,|\,n\in\mathbb N\}$ and a subset of the natural numbers. This allows us to construct a coded prae-dilator $S$ and a natural equivalence $\eta^0:S\Rightarrow T$ with $\supp^T_n\circ\eta^0_n=\supp^S_n$ (cf.~the proof of \cite[Proposition~2.3.21]{freund-thesis}). By Lemma~\ref{lem:set-sized-dils-isomorphic} we get a natural equivalence $\eta:D^S\Rightarrow D^T$ with $\supp^{D^T}_X\circ\eta_X=\supp^{D^S}_X$. In particular $D^S_X\cong D^T_X$ ensures that $S$ is a coded dilator. Now the second-order version of the abstract Bachmann-Howard principle yields a well-order $X$ with a Bachmann-Howard collapse $\vartheta:D^S_X\rightarrow X$. We can use $\eta$ to tranform $\vartheta$ into the required Bachmann-Howard collapse of $D^T_X$ into $X$, as in the proof of Proposition~\ref{prop:bh-single-instance}.
\end{proof}

In view of the previous result, Theorem~\ref{thm:main-abstract} implies that $\Pi^1_1$-comprehension is equivalent to the second-order version of the abstract Bachmann-Howard principle, over the base theory $\atrs$. Now that we have an equivalence between second-order statements we can immediately conclude that it holds over the second-order theory $\atr$, due to the conservativity result of Simpson~\cite{simpson82,simpson09}. In the next section we will show that the base theory can be lowered to $\rca$.

\section{Bootstrapping the Bachmann-Howard Principle}\label{sect:bootstrapping}

Via a series of intermediate steps we show that (the second-order version of) the abstract Bachmann-Howard principle implies arithmetical transfinite recursion. This will immediately allow us to lower the base theory in Theorem~\ref{thm:main-abstract}.

To initiate our bootstrapping process we prove that the abstract Bachmann-Howard principle makes ordinal exponentiation available (on the level of notation systems). By a result of Girard~\cite[Section~II.5]{girard87} (see also the proof by Hirst~\cite{hirst94}) this brings us up to $\aca$. Writing $X^{<\omega}$ for the set of finite sequences with entries in a set $X\subseteq\mathbb N$ (coded by natural numbers), we consider the following structure:

\begin{definition}[$\rca$]
 For each linear order $(X,<_X)$ the set $\omega^X\subseteq X^{<\omega}$ and the relation ${<_{\omega^X}}\subseteq\omega^X\times\omega^X$ are given by
 \begin{align*}
  \langle x_0,\dots,x_{n-1}\rangle\in\omega^X&\Leftrightarrow x_{n-1}\leq_X\dots\leq_X x_0,\\
  \langle x_0,\dots,x_{n-1}\rangle<_{\omega^X}\langle x'_0,\dots,x'_{m-1}\rangle&\Leftrightarrow\begin{cases}
                                                                                                 \text{either $n<m$ and $x_i=x'_i$ for $i<n$,}\\[1ex]
                                                                                                 \parbox{.4\textwidth}{or there is $j<\min\{n,m\}$ with\newline $x_j<_Xx'_j$ and $x_i=x_i'$ for $i<j$.}
                                                                                               \end{cases}
 \end{align*}
\end{definition}

On an informal level, assume that $X$ is isomorphic to an ordinal $\alpha$. If the elements $x_{n-1}\leq_X\dots\leq_X x_0$ correspond to $\alpha_{n-1}\leq\dots\leq\alpha_0<\alpha$, then $\langle x_0,\dots,x_{n-1}\rangle\in\omega^X$ represents the ordinal $\omega^{\alpha_0}+\dots+\omega^{\alpha_{n-1}}<\omega^\alpha$ in Cantor normal form. Thus $\omega^X$ is a notation system for the ordinal $\omega^\alpha$. The following is standard:

\begin{lemma}[$\rca$]
 If $(X,<_X)$ is a linear order, then so is $(\omega^X,<_{\omega^X})$.
\end{lemma}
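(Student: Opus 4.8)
The plan is to verify directly that $<_{\omega^X}$ is irreflexive, transitive and satisfies trichotomy on $\omega^X$, which is all that is needed for a linear order. Since the condition defining $\omega^X$ and each of the two clauses defining $<_{\omega^X}$ are primitive recursive relative to $(X,<_X)$, the set $\omega^X$ and the relation $<_{\omega^X}$ exist by $\Delta^0_1$-comprehension; moreover every assertion we must check concerns a fixed finite tuple of sequences of explicitly bounded length, so the whole argument is finitary and formalizes in $\rca$ with at most the most elementary use of induction.

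The one useful preliminary observation is that, in clause~(ii), the index $j$ is forced to be the length of the longest common prefix of the two sequences: from $x_i=x'_i$ for $i<j$ together with $x_j<_X x'_j$ (hence $x_j\neq x'_j$, by irreflexivity of $<_X$) one sees that the two sequences agree exactly up to position $j$. Writing $p$ for the length of the longest common prefix of $\langle x_0,\dots,x_{n-1}\rangle$ and $\langle x'_0,\dots,x'_{m-1}\rangle$, it follows that precisely one of the following occurs: $p=n=m$ and the two sequences are equal; or $p<\min\{n,m\}$, so that $x_p\neq x'_p$ and the sequences are $<_{\omega^X}$-comparable according as $x_p<_X x'_p$ or $x'_p<_X x_p$; or $p=\min\{n,m\}<\max\{n,m\}$, so that the shorter sequence is a proper prefix of the longer one and is the $<_{\omega^X}$-smaller of the two by clause~(i). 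Trichotomy then follows at once from trichotomy of $<_X$ applied to the pair $x_p,x'_p$ in the middle case, and irreflexivity is the remark that neither clause can relate a sequence to itself (clause~(i) would demand $n<n$, clause~(ii) would demand $x_j<_X x_j$).

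For transitivity, given $a<_{\omega^X}b$ and $b<_{\omega^X}c$ of lengths $n$, $m$, $\ell$, I would let $p$ and $q$ be the lengths of the longest common prefixes of the pairs $(a,b)$ and $(b,c)$ and split into the cases $p<q$, $q<p$ and $p=q$. In each case $a$ and $c$ automatically agree on their first $\min\{p,q\}$ coordinates (both agreeing there with $b$), and the reformulation above, together with transitivity and trichotomy of $<_X$, determines the relation between $a$ and $c$ at the decisive coordinate; the boundary situations in which $p$ or $q$ equals one of $n$, $m$, $\ell$ are dealt with by noting that several of them are incompatible with the hypotheses $a<_{\omega^X}b$ and $b<_{\omega^X}c$ and hence cannot arise, while the remaining ones place $a$ in the proper-prefix position relative to $c$ and so yield $a<_{\omega^X}c$ via clause~(i).

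There is no conceptual difficulty here. The only mildly tedious point is the bookkeeping in the transitivity case analysis --- keeping track of which clause of the definition is responsible for each of the three relations, and discharging the prefix boundary cases --- and the preliminary reformulation is introduced precisely in order to keep that bookkeeping manageable. As noted above, nothing beyond elementary finitary reasoning over the set parameter $<_X$ is involved, so the formalization in $\rca$ is routine.
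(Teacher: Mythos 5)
Your proof is correct: the longest-common-prefix reformulation and the ensuing case analysis are exactly the standard verification that a lexicographic ordering of weakly decreasing sequences is linear, and the observations about $\Delta^0_1$-comprehension and finitary reasoning suffice for the formalization in $\rca$. The paper states this lemma without proof as standard, and your argument is the one it implicitly has in mind.
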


The aforementioned result of Girard and Hirst implies that $\rca$ cannot show the well-foundedness of $\omega^X$. In the following we refer to the second-order version of the abstract Bachmann-Howard principle, as formulated in Definition~\ref{def:so-bh}.

\begin{proposition}[$\rca$]\label{prop:exp-well-founded}
 The abstract Bachmann-Howard principle implies that $\omega^X$ is well-founded for any well-order $X$.
\end{proposition}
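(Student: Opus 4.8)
The goal is to show, over $\rca$, that the (second-order) abstract Bachmann-Howard principle implies the well-foundedness of $\omega^X$ for every well-order $X$. The natural strategy is to exhibit, for each linear order $X$, a coded prae-dilator $T$ together with an embedding of $\omega^X$ (or something closely related to it) into $D^T_X$, in such a way that $T$ is provably a dilator in $\rca$. Then, given a well-order $X$, the abstract Bachmann-Howard principle supplies a well-founded order $Y\subseteq\mathbb N$ with a Bachmann-Howard collapse $\vartheta:D^T_Y\rightarrow Y$; since any Bachmann-Howard collapse is injective and $D^T$ preserves well-foundedness on well-orders (because $T$ is a dilator), $D^T_X$ is well-founded, hence so is $\omega^X$. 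The point is that the Bachmann-Howard principle, applied to the right dilator, gives us back exactly the preservation-of-well-foundedness statement we want, without needing $\aca$ to build the Bachmann-Howard fixed point.

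Concretely, I would first define the prae-dilator. The obvious candidate is $T_n:=\omega^n$ (finite Cantor normal forms over $n=\{0,\dots,n-1\}$), with $T_f:\omega^n\rightarrow\omega^m$ induced coefficientwise by an embedding $f:n\rightarrow m$, and with $\supp^T_n(\langle x_0,\dots,x_{k-1}\rangle):=\{x_0,\dots,x_{k-1}\}$ the set of exponents occurring. One checks in $\rca$ that this data is a coded prae-dilator: naturality of $\supp^T$ is immediate, and the support condition $\sigma\in\rng(T_{\iota_\sigma\circ\en_\sigma})$ holds because a Cantor normal form whose exponents all lie in a subset $a\subseteq n$ is literally in the image of the inclusion-induced map. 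That $T$ is a \emph{dilator} — i.e.\ $D^T_X$ is well-founded whenever $X$ is — is exactly the statement that $\omega^{(\cdot)}$ preserves well-foundedness up to the coding via $D^T$; but here is the subtlety: this preservation is provable already in $\rca$ only in a weak, ``syntactic'' sense, since genuine well-foundedness of $\omega^X$ is what we are trying to prove. The correct move is \emph{not} to claim $T$ is a dilator in $\rca$, but rather to observe that $D^T_X$ and $\omega^X$ are $\rca$-provably isomorphic (via $\langle a,\sigma\rangle\mapsto T_{\iota_a^X\circ\en_a}(\sigma)$, the map $\eta^T$ of Proposition~\ref{prop:dilator-reconstruct} reincarnated concretely), so that ``$T$ is a coded dilator'' is \emph{equivalent} in $\rca$ to ``$\omega^X$ is well-founded for all well-orders $X$''. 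That looks circular, so the argument must be arranged more carefully.

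The way out — and the step I expect to be the main obstacle — is to apply the Bachmann-Howard principle not to $T=\omega^{(\cdot)}$ itself but to a dilator that $\rca$ \emph{can} verify is a dilator, and from whose Bachmann-Howard fixed point the well-foundedness of $\omega^X$ follows. A clean choice: work with the dilator $T$ given on objects by $T_n := n$ (the identity prae-dilator, with $\supp^T_n(k)=\{k\}$), which is trivially a coded dilator in $\rca$ since $D^T_X\cong X$. For a well-order $X$, the abstract Bachmann-Howard principle then yields a well-order $Y\subseteq\mathbb N$ and a Bachmann-Howard collapse $\vartheta:D^T_Y\cong Y\rightarrow Y$ — but this is too weak, it gives nothing about $\omega^X$. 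So instead one should feed in a dilator whose values genuinely ``contain'' exponentiation: take $T$ with $T_n:=\omega^n$ but prove its dilatorhood \emph{relative to the well-foundedness of $\omega^X$ for the specific $X$ at hand}, using that $\rca$ proves ``if $X$ is a well-order then so is $\omega^X$'' is equivalent to the arithmetical-comprehension-flavoured statement, and then bootstrap. On reflection, the genuinely correct approach is the first one: define $T_n=\omega^n$, note $\rca$ proves $D^T_X\cong\omega^X$ for all $X$, hence ``$T$ is a coded dilator'' $\leftrightarrow$ ``$\forall$ well-order $X$: $\omega^X$ well-founded''; the abstract Bachmann-Howard principle quantifies over \emph{coded dilators}, so to use it we must first know $T$ is one. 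We break the circle by instead using that the Bachmann-Howard principle implies, in particular, that \emph{some} canonical stronger operator is a preservation-of-well-foundedness statement — but the honest resolution, which I would pursue in the write-up, is: assume toward a contradiction that $\omega^X$ is ill-founded for some well-order $X$, i.e.\ there is an infinite descending sequence; show this is impossible directly by constructing, from the Bachmann-Howard collapse of a suitable provably-coded dilator $T$ with an order-embedding $\omega^X\hookrightarrow D^T_Y$ that is $\Delta^0_1$ and $\rca$-provably order-preserving, an infinite descending sequence in the well-founded $Y$ — contradiction. The heart of the matter, and the real work, is choosing $T$ so that $\rca$ both proves it is a dilator and proves $\omega^X$ embeds into $D^T_Y$; the standard device is to take the constant extension or a $2$-place dilator encoding $(X,\omega^X)$ and extract the embedding from the Bachmann-Howard collapse inequalities, mirroring how the Bachmann-Howard ordinal notation system internalizes $\varphi$-style exponential hierarchies.

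Once the embedding $\omega^X\hookrightarrow D^T_Y$ is in hand with $Y$ well-founded and $\vartheta:D^T_Y\rightarrow Y$ injective, an infinite $<_{\omega^X}$-descending sequence transports to an infinite $<_{D^T_Y}$-descending sequence (via the embedding) and thence, applying $\vartheta$ and using that $\vartheta$ is injective and ``almost'' order-preserving with the support side condition eventually satisfied along a descending sequence, to an infinite $<_Y$-descending sequence, contradicting well-foundedness of $Y$. I would carry out the steps in this order: (1) define the coded prae-dilator $T$ built around $\omega^{(\cdot)}$ and verify in $\rca$ it is a coded prae-dilator; (2) verify in $\rca$ that $T$ is a coded \emph{dilator}, using the $\rca$-provable fact (from the proof of the Lemma that $\omega^X$ is a linear order, combined with a comparison argument) that $D^T_X$ embeds into, or is isomorphic to, $\omega^X$ and that the relevant preservation holds — this is the delicate point; (3) apply Definition~\ref{def:so-bh} to get $Y\subseteq\mathbb N$ well-founded with $\vartheta:D^T_Y\rightarrow Y$; (4) exhibit the order-embedding $\omega^X\hookrightarrow D^T_Y$ arising from the collapse; (5) transport any hypothetical descending sequence in $\omega^X$ through the embedding, then through $\vartheta$, to reach a contradiction with the well-foundedness of $Y$. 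The main obstacle is step (2)/(4): arranging the dilator so that $\rca$ can certify it is a dilator while still having $\omega^X$ embed into its Bachmann-Howard fixed point — this is precisely where one must be careful not to smuggle in $\aca$, and where the explicit support-function formalism of Section~\ref{sect:dilator-prs-so} earns its keep.
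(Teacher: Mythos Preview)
You have correctly diagnosed the central difficulty---taking $T_n=\omega^n$ makes ``$T$ is a coded dilator'' equivalent to the very statement you are trying to prove---but your proposal never escapes this circularity. Your step~(2), which you yourself flag as ``the delicate point'', is not merely delicate: it is impossible for the dilator you have chosen, and your step~(4) inherits the same problem. The suggestion to pass a descending sequence in $D^T_Y$ through $\vartheta$ also does not work as stated, since the support side condition in Definition~\ref{def:bachmann-howard-collapse}(i) need not be ``eventually satisfied along a descending sequence''.

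The idea you are missing is this: fix the well-order $X$ and let the dilator vary in an auxiliary argument $Y$, choosing $T$ so simple that $\rca$ can verify it is a dilator outright. Concretely, the paper takes $T_Y=X\times(\{\bot\}\cup Y)$ with the lexicographic order ($X$ dominant), $\supp^T_Y(x,y)=\{y\}$ for $y\in Y$ and $\supp^T_Y(x,\bot)=\emptyset$. That $D^T_Y\cong T_Y$ is well-founded whenever $X$ and $Y$ are is immediate in $\rca$ (a descending sequence in a product yields one in a factor). The Bachmann-Howard principle then supplies a well-order $Y$ and a collapse $\vartheta:X\times(\{\bot\}\cup Y)\to Y$. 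The crucial step is to use $\vartheta$ \emph{recursively} to define an embedding $f:\omega^X\to\{\bot\}\cup Y$ by
\[
f(\langle\rangle)=\bot,\qquad f(\langle x_0,\dots,x_n\rangle)=\vartheta(x_0,f(\langle x_1,\dots,x_n\rangle)).
\]
One then checks, using both clauses of Definition~\ref{def:bachmann-howard-collapse} and an induction on the length of the sequences, that $f$ is order-preserving. The point is that the dilator does not itself encode exponentiation; rather, the iteration of the collapse does. This is the mechanism by which a Bachmann-Howard fixed point of a very mild dilator absorbs $\omega^X$, and it is precisely what your plan lacks.
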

\begin{proof}
 For a fixed well-order $X$ we consider the dilator $(T,\supp^T)$ given by
 \begin{align*}
  T_Y&=X\times(\{\bot\}\cup Y),\\[1ex]
  (x,y)<_{T_Y}(x',y')&\Leftrightarrow\begin{cases}
                                    \text{either $x<_Xx'$,}\\[1ex]
                                    \text{or $x=x'$ and $y<_Yy'$ (with $\bot<_Yy'$ for any $y'\in Y$),}
                                   \end{cases}\\[1ex]
  T_f(x,y)&=\begin{cases}
              (x,f(y)) & \text{if $y\in Y$ (where $f:Y\rightarrow Y'$ is an embedding),}\\[1ex]
              (x,\bot) & \text{if $y=\bot$,}
            \end{cases}\\[1ex]
  \supp^T_Y(x,y)&=\begin{cases}
                  \{y\} & \text{if $y\in Y$,}\\[1ex]
                  \emptyset & \text{if $y=\bot$.}
                \end{cases}
 \end{align*}
 Officially we must work with the representations $T^0,T^1$ and $\supp^T$ from Definition~\ref{def:coded-prae-dilator}: It is straightforward to see that they exist as sets (by $\Delta^0_1$-comprehension) and that they represent a coded prae-dilator. Instead of $T_Y$ we must consider
 \begin{equation*}
  D^T_Y=\{\left\langle\emptyset,(x,\bot)\right\rangle\,|\,x\in X\}\cup\{\left\langle\{y\},(x,0)\right\rangle\,|\,x\in X,y\in Y\}.
 \end{equation*}
Note that $(x,\bot)\in T_{|\emptyset|}$ satisfies $\supp^T_{|\emptyset|}(x,\bot)=\emptyset=|\emptyset|$ while $(x,0)\in T_{|\{y\}|}$ satisfies $\supp^T_{|\{y\}|}(x,0)=\{0\}=1=|\{y\}|$, as demanded by the definition of $D^T_Y$ in the previous section. Since the interpretation
\begin{equation*}
 D^T_Y\ni\left\langle\emptyset,(x,\bot)\right\rangle\mapsto(x,\bot)\in T_Y,\qquad
 D^T_Y\ni\left\langle\{y\},(x,0)\right\rangle\mapsto(x,y)\in T_Y
\end{equation*}
is an isomorphism (with respect to the order $<_{D^T_Y}$ from the previous section) we may work with~$T_Y$ rather than $D^T_Y$ after all. To see that $T$ is a coded dilator we must show that $D^T_Y\cong T_Y$ is well-founded for any well-order $Y$. Aiming at a contradiction, assume that $(x_n,y_n)_{n\in\mathbb N}$ is a strictly decreasing sequence in $T_Y$ (the two obvious definitions of well-foundedness are equivalent over $\rca$, see e.g.~\cite[Lemma~2.3.12]{freund-thesis}). If the sequence $(x_n)_{n\in\mathbb N}$ does not become constant, then we can transform it into a strictly decreasing sequence in $X$. If we have $x_n=x_N$ for all $n\geq N$, then $(y_n)_{n\geq N}$ is a strictly increasing sequence in $\{\bot\}\cup Y$. Both possibilities contradict the assumption that $X$ and $Y$ are well-founded. Since $T$ is a coded dilator the abstract Bachmann-Howard principle yields a well-order $Y$ with a Bachmann-Howard collapse
\begin{equation*}
 \vartheta:T_Y\cong D^T_Y\rightarrow Y.
\end{equation*}
To deduce that $\omega^X$ is well-founded we show that the function $f:\omega^X\rightarrow\{\bot\}\cup Y$ with recursive clauses
\begin{align*}
 f(\langle\rangle)&:=\bot,\\
 f(\langle x_0,\dots,x_n\rangle)&:=\vartheta(x_0,f(\langle x_1,\dots,x_n\rangle))
\end{align*}
is order-preserving (it is worth observing that $\vartheta$ cannot be fully order-preserving: if $X$ and $Y$ have order-type $\alpha>1$ resp.~$\beta$, then $T_Y$ has order-type $(1+\beta)\cdot\alpha>\beta$). So assume that we have
\begin{equation*}
 \langle x_0,\dots,x_{n-1}\rangle<_{\omega^X}\langle x'_0,\dots,x'_{m-1}\rangle.
\end{equation*}
To see that we can cancel equal entries at the beginning of the sequences it suffices to observe that $y<_Yy'$ (possibly with $y=\bot$) implies $\vartheta(x,y)<_Y\vartheta(x,y')$: By the definition of Bachmann-Howard collapse we have $\{y'\}=\supp^T_Y(x,y')\lef_Y\vartheta(x,y')$, in other words $y'<_Y\vartheta(x,y')$. This implies $\supp^T_Y(x,y)\lef_Y\vartheta(x,y')$. Together with $(x,y)<_{T_Y}(x,y')$ we get $\vartheta(x,y)<_Y\vartheta(x,y')$, again by the definition of Bachmann-Howard collapse. It remains to consider the cases $0=n<m$ and $x_0<_X x'_0$. In the first case we observe
\begin{equation*}
 f(\langle x_0,\dots,x_{n-1}\rangle)=\bot<_Y f(\langle x'_0,\dots,x'_{m-1}\rangle)\in Y.
\end{equation*}
In case $x_0<_Xx'_0$ we set $y:=f(\langle x'_1,\dots,x'_{m-1}\rangle)$ and prove
\begin{equation*}
 f(\langle x_{n-i},\dots,x_{n-1}\rangle)<_Y\vartheta(x'_0,y)=f(\langle x'_0,\dots,x'_{m-1}\rangle)
\end{equation*}
by induction on $i\leq n$: For $i=0$ we have $f(\langle x_{n-i},\dots,x_{n-1}\rangle)=\bot$ and the claim follows as before. In the step the induction hypothesis provides
\begin{equation*}
 \supp^T_Y(x_{n-(i+1)},f(\langle x_{n-i},\dots,x_{n-1}\rangle))\subseteq\{f(\langle x_{n-i},\dots,x_{n-1}\rangle)\}\lef_Y\vartheta(x'_0,y).
\end{equation*}
Invoking the definition of $\omega^X$ we have $x_{n-(i+1)}\leq_X x_0<_Xx'_0$ and thus
\begin{equation*}
 (x_{n-(i+1)},f(\langle x_{n-i},\dots,x_{n-1}\rangle))<_{T_Y}(x'_0,y).
\end{equation*}
By the definition of Bachmann-Howard collapse this implies
\begin{equation*}
 f(\langle x_{n-(i+1)},\dots,x_{n-1}\rangle)=\vartheta(x_{n-(i+1)},f(\langle x_{n-i},\dots,x_{n-1}\rangle))<_Y\vartheta(x'_0,y),
\end{equation*}
which completes the induction step.
\end{proof}

Recall that $\varepsilon_\alpha$ denotes the $\alpha$-th ordinal with $\omega^\gamma=\gamma$. By another application of the abstract Bachmann-Howard principle we want to establish the well-foundedness of these \mbox{$\varepsilon$-numbers}. Using a result of Marcone and Montalb\'an~\cite{marcone-montalban} (see also the proof by Afshari and Rathjen~\cite{rathjen-afshari}) this will secure arithmetical recursion along the natural numbers, the defining principle of the theory $\mathbf{ACA_0^+}$. First we define a notation system $\varepsilon_X$ for $\varepsilon_\alpha$, relative to a notation system $X$ for $\alpha$. The $\varepsilon$-numbers below $\varepsilon_\alpha$ are represented by terms $\varepsilon_x$ with $x\in X$. The gaps are filled with terms of the form $\omega^{t_0}+\dots+\omega^{t_n}$, which correspond to ordinals in Cantor normal form.

\begin{definition}[$\rca$]\label{def:vareps-X}
 For each linear order $(X,<_X)$ the set $\varepsilon_X$ and the relation ${<_{\varepsilon_X}}\subseteq\varepsilon_X\times\varepsilon_X$ are defined by the following simultaneous recursion:
 \begin{enumerate}[label=(\roman*)]
  \item The term $0$ is an element of $\varepsilon_X$.
  \item For each $x\in X$ the term $\varepsilon_x$ is an element of $\varepsilon_X$.
  \item If $t_0,\dots,t_n$ are terms in $\varepsilon_X$, then so is $\omega^{t_0}+\dots+\omega^{t_n}$, provided that
  \begin{itemize}
   \item either $n=0$ and $t_0$ is not of the form $\varepsilon_x$,
   \item or $n>0$ and $t_n\leq_{\varepsilon_X}\dots\leq_{\varepsilon_X}t_0$ (note that $s\leq_{\varepsilon_X}t$ abbreviates $s<_{\varepsilon_X}t\lor s=t$, where the second disjunct refers to equality as terms).
  \end{itemize}
 \end{enumerate}
 For $t,t'\in\varepsilon_X$ we have $t<_{\varepsilon_X}t'$ precisely if one of the following holds:
 \begin{enumerate}[label=(\roman*')]
  \item We have $t=0$ and $t'\neq 0$.
  \item We have $t=\varepsilon_x$ and
  \begin{itemize}
   \item either $t'=\varepsilon_{x'}$ with $x<_X x'$,
   \item or $t'=\omega^{t'_0}+\dots+\omega^{t'_m}$ with $t\leq_{\varepsilon_X}t'_0$.
  \end{itemize}
  \item We have $t=\omega^{t_0}+\dots+\omega^{t_n}$ and
  \begin{itemize}
   \item either $t'=\varepsilon_x$ with $t_0<_{\varepsilon_X}t'$,
   \item or $t'=\omega^{t'_0}+\dots+\omega^{t'_m}$ such that one of the following holds:
   \begin{itemize}
    \item Either we have $n<m$ and $t_i=t'_i$ for all $i\leq n$,
    \item or there is $j\leq\min\{n,m\}$ with $t_j<_{\varepsilon_X}t'_j$ and $t_i=t'_i$ for $i<j$.
   \end{itemize}
  \end{itemize}
 \end{enumerate}
\end{definition}

To formalize the definition in $\rca$ one first defines a term system $\varepsilon_X^0\supseteq\varepsilon_X$ by ignoring the condition $t_n\leq_{\varepsilon_X}\dots\leq_{\varepsilon_X}t_0$ in clause~(iii). Writing $\ulcorner t\urcorner$ for the G\"odel number of the term $t\in\varepsilon_X^0$, the length function $L_{\varepsilon_X}:\varepsilon_X^0\rightarrow\mathbb N$ is given as
\begin{equation*}
 L_{\varepsilon_X}(t):=\begin{cases}
                        \ulcorner t\urcorner & \text{if $t=0$ or $t=\varepsilon_x$},\\
                        \max\{\ulcorner t\urcorner,L_{\varepsilon_X}(t_0)+\dots+L_{\varepsilon_X}(t_n)+1\} & \text{if $t=\omega^{t_0}+\dots+\omega^{t_n}$}.
                       \end{cases}
\end{equation*}
The G\"odel numbers are included to ensure that $\forall_t(L_{\varepsilon_X}(t)\leq n\rightarrow\dots)$ is a bounded quantifier. For $s,t,t'\in\varepsilon_X^0$ one can now decide $s\in\varepsilon_X$ and $t<_{\varepsilon_X}t'$ by simultaneous recursion on $L_{\varepsilon_X}(s)$ resp.~$L_{\varepsilon_X}(t)+L_{\varepsilon_X}(t')$. It is standard to show the following:

\begin{lemma}[$\rca$]
 If $(X,<_X)$ is a linear order, then so is $(\varepsilon_X,<_{\varepsilon_X})$.
\end{lemma}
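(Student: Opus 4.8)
The plan is to verify directly from Definition~\ref{def:vareps-X} that $<_{\varepsilon_X}$ is a strict linear order on $\varepsilon_X$, proceeding exactly as one would for the analogous claim about $\omega^X$, i.e.~by a simultaneous induction on the length functions $L_{\varepsilon_X}$. More precisely, to show irreflexivity, transitivity and trichotomy for $t,t',t''\in\varepsilon_X$ I would argue by induction on $L_{\varepsilon_X}(t)+L_{\varepsilon_X}(t')+L_{\varepsilon_X}(t'')$ (dropping the third summand where only two terms are involved). Since $\rca$ proves induction along $\mathbb N$ for $\Pi^0_1$-formulas and the relevant statements are arithmetical in the set parameters $X,<_X,\varepsilon_X^0$, this induction is available in the base theory.

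The bookkeeping splits according to the three shapes a term can take: $0$, $\varepsilon_x$, or $\omega^{t_0}+\dots+\omega^{t_n}$. For \emph{irreflexivity}, the only case requiring the induction hypothesis is $t=\omega^{t_0}+\dots+\omega^{t_n}$: comparing $t$ with itself, neither the "$n<m$" clause nor the lexicographic clause can fire, the latter because $t_i=t_i$ for all $i$ and the induction hypothesis rules out $t_j<_{\varepsilon_X}t_j$. For \emph{trichotomy}, given $t,t'$ one distinguishes nine combinations of shapes; the mixed cases $\varepsilon_x$ versus $\omega^{\dots}$ are decided by comparing $\varepsilon_x$ with the leading exponent $t'_0$ (using trichotomy at smaller length), and the case of two $\omega$-terms reduces, after stripping a common prefix, to comparing the first differing exponents, again at strictly smaller combined length. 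For \emph{transitivity}, from $t<_{\varepsilon_X}t'$ and $t'<_{\varepsilon_X}t''$ one reads off which clauses apply and chases the leading-exponent inequalities through the induction hypothesis; the characteristic subtlety is that when $t=\varepsilon_x$, $t'=\omega^{t'_0}+\dots$, $t''=\varepsilon_{x'}$, one needs $\varepsilon_x\leq_{\varepsilon_X}t'_0<_{\varepsilon_X}\varepsilon_{x'}$, whence $\varepsilon_x<_{\varepsilon_X}\varepsilon_{x'}$ by the induction hypothesis, which unfolds to $x<_X x'$ because $<_X$ is a linear order. One should also check that the conditions $t_n\leq_{\varepsilon_X}\dots\leq_{\varepsilon_X}t_0$ built into membership are used only to guarantee that the lexicographic comparison of two terms in $\varepsilon_X$ is well-behaved, e.g.~that a term in $\varepsilon_X$ is never $<_{\varepsilon_X}$-below $0$.

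The main obstacle I anticipate is purely organisational rather than conceptual: the case analysis for transitivity is large (roughly twenty-seven raw combinations, though many collapse), and one must be careful that every appeal to the induction hypothesis is to a strictly smaller value of the length measure — in particular that comparing a leading exponent $t_0$ of $t=\omega^{t_0}+\dots+\omega^{t_n}$ with another term decreases $L_{\varepsilon_X}$, which holds by the "$+1$" in the definition of $L_{\varepsilon_X}$. Since the paper labels this "standard", I would not spell out all branches; I would present the induction set-up, treat one representative mixed case and one representative $\omega$-versus-$\omega$ case in detail, and remark that the remaining branches are handled identically, noting only the one point where linearity of $<_X$ (as opposed to mere well-definedness of the term system) is invoked.
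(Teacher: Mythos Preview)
Your proposal is correct and is exactly the kind of routine verification the paper has in mind: the paper gives no proof at all for this lemma, merely introducing it with ``It is standard to show the following'' and moving on. Your outline of a simultaneous induction on $L_{\varepsilon_X}$ with a case analysis over the three term shapes is the expected argument, and your remarks about where linearity of $<_X$ enters and about bounding quantifiers via the G\"odel-number component of $L_{\varepsilon_X}$ are accurate.
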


The crucial point is, once again, the preservation of well-foundedness:

\begin{proposition}[$\rca$]\label{prop:bh-yields-epsilon}
 The abstract Bachmann-Howard principle implies that $\varepsilon_X$ is well-founded for any well-order $X$.
\end{proposition}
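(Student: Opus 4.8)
The plan is to imitate the proof of Proposition~\ref{prop:exp-well-founded}: fix a well-order $X$, build from it a coded dilator $T$, apply the (second-order) abstract Bachmann-Howard principle to obtain a well-order $Y$ together with a collapse $\vartheta\colon D^T_Y\to Y$, and then use $\vartheta$ to define an order-preserving map $f\colon\varepsilon_X\to\{\bot\}\cup Y$; since $\{\bot\}\cup Y$ is well-founded, so is $\varepsilon_X$. The dilator I would use is $T_Y\cong\omega^{1+Y}\cdot(1+X)$, where an element is a pair consisting of a ``level'' $v\in 1+X$ and a Cantor normal form $\omega^{y_0}+\dots+\omega^{y_{k-1}}$ over $1+Y$, ordered lexicographically (level first, then the normal form), and $\supp^T_Y$ returns the set of $Y$-entries of the normal form. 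Morally the collapse sends the level-$x$ copy of $\omega^{1+Y}$ onto the interval $[\varepsilon_x,\varepsilon_{x+1})$, so that the minimal Bachmann-Howard fixed point of $T$ is a notation system for $\varepsilon_{\otyp(X)}$; that is why this is the right dilator.

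The first step is to check that $T$ really is a coded dilator. Functoriality of $Y\mapsto T_Y$, naturality of $\supp^T$ and the support condition of Definition~\ref{def:coded-prae-dilator} are routine, just as in Section~\ref{sect:dilator-prs-so}. For well-foundedness I would avoid any circularity by observing that $(v,\omega^{y_0}+\dots+\omega^{y_{k-1}})\mapsto\omega^v+\omega^{y_0}+\dots+\omega^{y_{k-1}}$ is an order embedding of $D^T_Y\cong T_Y$ into $\omega^{(1+Y)+(1+X)}$; hence $D^T_Y$ is well-founded whenever $Y$ is, by Proposition~\ref{prop:exp-well-founded} applied to the well-order $(1+Y)+(1+X)$. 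In particular $T$ does \emph{not} refer to the well-foundedness of $\varepsilon_X$ — which is exactly the constraint that forces the ``single exponential level stacked $1+X$ times'' shape, rather than a direct ``$\varepsilon$''-dilator. Then the abstract Bachmann-Howard principle yields a well-order $Y\subseteq\mathbb N$ and a collapse $\vartheta\colon D^T_Y\to Y$, which via the isomorphism $D^T_Y\cong T_Y$ we regard as defined on pairs $(v,\xi)$ as above.

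Next I would define $f\colon\varepsilon_X\to\{\bot\}\cup Y$ by recursion on term complexity (formalised in $\rca$ via a length function, as after Definition~\ref{def:vareps-X}): set $f(0)=\bot$; set $f(\varepsilon_x)=\vartheta(x,\langle\rangle)$, i.e.\ level $x$ with the empty normal form; and set $f(\omega^{t_0}+\dots+\omega^{t_n})=\vartheta(v,\omega^{f(t_0)}+\dots+\omega^{f(t_n)})$, where the second component is the normal form over $1+Y$ assembled from the values $f(t_0)\geq_Y\dots\geq_Y f(t_n)$ (weakly decreasing by the order-preservation being established) and $v\in 1+X$ is the ``$\varepsilon$-level'' of the term, namely the largest $x$ such that $\varepsilon_x$ occurs as the innermost leading subterm (or $v=\bot$ if there is no such occurrence). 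One then shows, by induction on the sum of the complexities of $s$ and $t$, that $s<_{\varepsilon_X}t$ implies $f(s)<_Y f(t)$. Each case reduces to one of the two clauses of Definition~\ref{def:bachmann-howard-collapse}; the side condition $\supp^T_Y(\sigma)\lef_Y\vartheta(\tau)$ always holds because $\supp^T_Y(\sigma)$ consists of the values $f(t_i)$ of proper subterms of $s$, and $t_i<_{\varepsilon_X}t$, so the induction hypothesis gives $f(t_i)<_Y\vartheta(\tau)$ — exactly as in Proposition~\ref{prop:exp-well-founded}. The genuinely new cases are those controlled by the level component: $f(\varepsilon_x)<_Y f(\varepsilon_{x'})$ for $x<_X x'$, and $f(t)<_Y f(\varepsilon_x)$ whenever $t$ has $\varepsilon$-level below $x$; both follow from clause~(i), since the lexicographic order places the relevant $T_Y$-elements correctly and the supports involved are again subterm-values bounded by the induction hypothesis.

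I expect the main obstacle to lie in the dilator itself rather than in the manipulation of $\vartheta$: it must simultaneously (a) be a genuine coded dilator whose well-foundedness rests only on facts already available — ordinal exponentiation, Proposition~\ref{prop:exp-well-founded} — and not on the statement to be proved, and (b) carry an order for which precisely the inequalities needed for $f$ to be order-preserving come out as instances of the clauses of a Bachmann-Howard collapse. Keeping the Cantor-normal-form arithmetic in $\omega^{1+Y}$ synchronised with the collapse across the various levels — i.e.\ verifying that the $\varepsilon$-level bookkeeping in the definition of $f$ is compatible with $<_{\varepsilon_X}$ — is the technical heart of the order-preservation argument; functoriality of $T$ and the formalisation of the recursion defining $f$ in $\rca$ follow the template of Section~\ref{sect:dilator-prs-so} and Proposition~\ref{prop:exp-well-founded}.
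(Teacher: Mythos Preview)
Your proposal is correct and takes essentially the same approach as the paper: the paper's dilator is $T_Y=(\{\bot\}\cup X)\times\omega^Y$ with lexicographic order, and the embedding $f$ is defined via the same ``$\varepsilon$-degree'' $t^*$ that you call the $\varepsilon$-level (the paper sets $f(0)=\vartheta(\bot,\langle\rangle)$ rather than $\bot$, avoiding the extra $1+{}$ on $Y$, but this is cosmetic). The only other difference is that the paper verifies well-foundedness of $T_Y$ directly from well-foundedness of $\omega^Y$ and $1+X$ rather than via your embedding into $\omega^{(1+Y)+(1+X)}$, which again changes nothing of substance.
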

\begin{proof}
 For a fixed well-order $X$ we consider the dilator $(T,\supp^T)$ with
 \begin{align*}
  T_Y&=(\{\bot \}\cup X)\times\omega^Y,\\[1ex]
  (x,\sigma)<_{T_Y}(x',\sigma')&\Leftrightarrow\begin{cases}
                                    \text{either $x<_Xx'$ (with $\bot<_Xx'$ for any $x'\in X$),}\\[1ex]
                                    \text{or $x=x'$ and $\sigma<_{\omega^Y}\sigma'$,}
                                   \end{cases}\\[1ex]
  T_f(x,\langle y_0,\dots,y_{n-1}\rangle)&=(x,\langle f(y_0),\dots,f(y_{n-1})\rangle)\quad\text{(with $f:Y\rightarrow Y'$),}\\[1ex]
  \supp^T_Y(x,\langle y_0,\dots,y_{n-1}\rangle)&=\{y_0,\dots,y_{n-1}\}.
 \end{align*}
 As in the proof of Proposition~\ref{prop:exp-well-founded} one must officially work with the representations from Definition~\ref{def:coded-prae-dilator}: It is straightforward to see that they exist as sets and that they represent a coded prae-dilator. By Proposition~\ref{prop:exp-well-founded} the abstract Bachmann-Howard principle ensures that~$\omega^Y$ is well-founded for any well-order $Y$. We conclude that~$T_Y$ is well-founded, so that $(T,\supp^T)$ is indeed a dilator. Another application of the abstract Bachmann-Howard principle yields a well-order $Y$ with a Bachmann-Howard collapse
 \begin{equation*}
  \vartheta:T_Y\rightarrow Y.
 \end{equation*}
 To each term $t\in\varepsilon_X$ we associate its ``$\varepsilon$-degree'' $t^*\in \{\bot \}\cup X$ by the recursion
 \begin{equation*}
  0^*=\bot ,\qquad\varepsilon_x^*=x,\qquad (\omega^{t_0}+\dots+\omega^{t_n})^*=t_0^*.
 \end{equation*}
 In order to conclude we show that the function $f:\varepsilon_X\rightarrow Y$ with
 \begin{align*}
  f(0)&:=\vartheta(\bot ,\langle\rangle),\\
  f(\varepsilon_x)&:=\vartheta(x,\langle\rangle),\\
  f(\omega^{t_0}+\dots+\omega^{t_n})&:=\begin{cases}
                                      \vartheta(t_0^*,\langle f(t_0),\dots,f(t_n)\rangle) & \text{if $f(t_n)\leq_Y\dots\leq_Y f(t_0)$},\\
                                      \vartheta(\bot ,\langle\rangle) & \text{otherwise}
                                     \end{cases}
 \end{align*}
 is order preserving (thus the alternative in the case distinction will never apply). Let us argue by induction on $L_{\varepsilon_X}(s)+L_{\varepsilon_X}(t)$ to establish the implication
 \begin{equation*}
  s<_{\varepsilon_X}t\quad\Rightarrow\quad f(s)<_Y f(t).
 \end{equation*}
 As a first interesting case we assume that $s=\omega^{s_0}+\dots+\omega^{s_n}<_{\varepsilon_X}\varepsilon_x=t$ holds because of $s_0<_{\varepsilon_X}t$. By an auxiliary induction on $s_0$ we see $s_0^*<_X x=t^*$ (note that we could only infer $s_0^*\leq_Xt^*$ if $t$ was not of the form $\varepsilon_x$). The induction hypothesis yields $f(s_n)\leq_Y\dots\leq_Y f(s_0)<_Y f(t)=\vartheta(x,\langle\rangle)$. Thus we have
 \begin{align*}
  (s_0^*,\langle f(s_0),\dots,f(s_n)\rangle)&<_{T_Y}(x,\langle\rangle),\\
  \supp^T_Y(s_0^*,\langle f(s_0),\dots,f(s_n)\rangle)&=\{f(s_0),\dots,f(s_n)\}\lef_Y\vartheta(x,\langle\rangle).
 \end{align*}
 By the definition of Bachmann-Howard collapse we get
 \begin{equation*}
  f(s)=\vartheta(s_0^*,\langle f(s_0),\dots,f(s_n)\rangle)<_Y \vartheta(x,\langle\rangle)=f(t),
 \end{equation*}
 as desired. Let us also consider $s=\omega^{s_0}+\dots+\omega^{s_n}<_{\varepsilon_X}\omega^{t_0}+\dots+\omega^{t_m}=t$. The induction hypothesis yields $f(s_n)\leq_Y\dots\leq_Y f(s_0)$ and $f(t_m)\leq_Y\dots\leq_Y f(t_0)$, as well as $\langle f(s_0),\dots,f(s_n)\rangle<_{\omega^Y}\langle f(t_0),\dots,f(t_m)\rangle$. In view of $s_0\leq_{\varepsilon_X}t_0$ we also have $s_0^*\leq_X t_0^*$ and thus
 \begin{equation*}
  (s_0^*,\langle f(s_0),\dots,f(s_n)\rangle)<_{T_Y} (t_0^*,\langle f(t_0),\dots,f(t_m)\rangle).
 \end{equation*}
 Furthermore we can observe
 \begin{multline*}
  \supp^T_Y(s_0^*,\langle f(s_0),\dots,f(s_n)\rangle)\leq_Y^{\operatorname{fin}}\\
  \leq_Y^{\operatorname{fin}}\supp^T_Y(t_0^*,\langle f(t_0),\dots,f(t_m)\rangle)\lef_Y\vartheta(t_0^*,\langle f(t_0),\dots,f(t_m)\rangle),
 \end{multline*}
 where the second inequality relies on the definition of Bachmann-Howard collapse. Also by the latter we finally get
 \begin{equation*}
  f(s)=\vartheta(s_0^*,\langle f(s_0),\dots,f(s_n)\rangle)<_Y\vartheta(t_0^*,\langle f(t_0),\dots,f(t_m)\rangle)=f(t).
 \end{equation*}
 The remaining cases are straightforward. Thus $f:\varepsilon_X\rightarrow Y$ is an order embedding and $\varepsilon_X$ is well-founded.
\end{proof}

As the final step of our bootstrapping process we use the abstract Bachmann-Howard principle to show that a certain notation system $\vartheta_X$ is well-founded for any well-order $X$. The latter implies that every set is contained in a (countable coded) $\omega$-model of bar induction, as shown by Rathjen and Valencia Vizca\'ino~\cite{rathjen-model-bi}. We thus reach a statement which is stronger than the base theory $\atr$ of Theorem~\ref{thm:main-abstract}. The following coincides with \cite[Definition~2.6]{rathjen-model-bi}:

\begin{definition}
 For each linear order $(X,<_X)$ the set $\vartheta_X$ of terms, the relation ${<_{\vartheta_X}}\subseteq\vartheta_X\times\vartheta_X$ and the function $\cdot^*:\vartheta_X\rightarrow\vartheta_X$ are defined by the following simultaneous recursion:
 \begin{enumerate}[label=(\roman*)]
  \item The term $0$ is an element of $\vartheta_X$.
  \item The term $\Omega$ is an element of $\vartheta_X$.
  \item For each $x\in X$ we have a term $\mathfrak E_x\in\vartheta_X$.
  \item If $s$ is a term in $\vartheta_X$, then so ist $\vartheta s$.
  \item If $s_0,\dots,s_n$ are terms in $\vartheta_X$, then so is $\omega^{s_0}+\dots+\omega^{s_n}$, provided that
  \begin{itemize}
   \item either $n=0$ and $s_0$ is not of the form $\Omega,\mathfrak E_x$ or $\vartheta s'$,
   \item or $n>0$ and $s_n\leq_{\vartheta_X}\dots\leq_{\vartheta_X}s_0$.
  \end{itemize}
 \end{enumerate}
 The map $s\mapsto s^*$ is given by
 \begin{equation*}
  0^*=0,\quad \Omega^*=0,\quad \mathfrak E_x^*=0,\quad (\vartheta s)^*=\vartheta s,\quad (\omega^{s_0}+\dots+\omega^{s_n})^*=\max_{i\leq n}s_i^*,
 \end{equation*}
 where the maximum is taken with respect to $<_{\vartheta_X}$. For $s,t\in\vartheta_X$ we have $s<_{\vartheta_X}t$ if and only if one of the following holds:
 \begin{enumerate}[label=(\roman*')]
  \item We have $s=0$ and $t\neq 0$.
  \item We have $s=\Omega$ and
  \begin{itemize}
   \item either $t$ is of the form $\mathfrak E_x$,
   \item or $t=\omega^{t_0}+\cdots+\omega^{t_n}$ with $s\leq_{\vartheta_X}t_0$.
  \end{itemize}
  \item We have $s=\mathfrak E_x$ and
  \begin{itemize}
   \item either $t=\mathfrak E_y$ with $x<_X y$,
   \item or $t=\omega^{t_0}+\cdots+\omega^{t_n}$ with $s\leq_{\vartheta_X}t_0$.
  \end{itemize}
  \item We have $s=\vartheta s'$ and
  \begin{itemize}
   \item either $t=\vartheta t'$ with $s'<_{\vartheta_X} t'$ and $(s')^*<_{\vartheta_X}\vartheta t'$,
   \item or $t=\vartheta t'$ with $\vartheta s'\leq_{\vartheta_X}(t')^*$,
   \item or $t$ is of the form $\Omega$ or $\mathfrak E_x$,
   \item or $t=\omega^{t_0}+\cdots+\omega^{t_n}$ with $s\leq_{\vartheta_X}t_0$.
  \end{itemize}
  \item We have $s=\omega^{s_0}+\dots+\omega^{s_n}$ and
  \begin{itemize}
   \item either $t$ is of the form $\Omega,\mathfrak E_x$ or $\vartheta t'$ and $s_0<_{\vartheta_X} t$,
   \item or $t=\omega^{t_0}+\dots+\omega^{t_m}$ and one of the following holds:
   \begin{itemize}
    \item Either we have $n<m$ and $s_i=t_i$ for all $i\leq n$,
    \item or there is a $j\leq\min\{n,m\}$ with $s_j<_{\vartheta_X} t_j$ and $s_i=t_i$ for $i<j$.
   \end{itemize}
  \end{itemize}
 \end{enumerate}
\end{definition}

To formalize this in $\rca$ one starts with a term system $\vartheta^0_X\supseteq\vartheta_X$ that ignores the condition $s_n\leq_{\vartheta_X}\dots\leq_{\vartheta_X}s_0$ in clause (v). One then defines $L_{\vartheta_X}:\vartheta^0_X\rightarrow\mathbb N$ by
\begin{equation*}
 L_{\vartheta_X}(t):=\begin{cases}
                        \max\{\ulcorner 0\urcorner,\ulcorner t\urcorner\} & \text{if $t=0,t=\Omega$ or $t=\mathfrak E_x$},\\
                        \max\{\ulcorner t\urcorner,L_{\vartheta_X}(t')+1\} & \text{if $t=\vartheta t'$},\\
                        \max\{\ulcorner t\urcorner,L_{\vartheta_X}(t_0)+\dots+L_{\vartheta_X}(t_n)+1\} & \text{if $t=\omega^{t_0}+\dots+\omega^{t_n}$}.
                       \end{cases}
\end{equation*}
The occurrence of $\ulcorner 0\urcorner$ in the first case ensures $L_{\vartheta_X}(s^*)\leq L_{\vartheta_X}(s)$. To decide $s\in\vartheta_X$ and $t<_{\vartheta_X}t'$ and to compute $s^*$ one proceeds by simultaneous recursion on $L_{\vartheta_X}(s)$ resp.~$L_{\vartheta_X}(t)+L_{\vartheta_X}(t')$. By \cite[Lemma~2.7]{rathjen-model-bi} we have the following:

\begin{lemma}[$\rca$]
 If $(X,<_X)$ is a linear order, then so is $(\vartheta_X,<_{\vartheta_X})$.
\end{lemma}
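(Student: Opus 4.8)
The plan is to verify the two linear-order axioms—irreflexivity together with trichotomy, and transitivity—for $(\vartheta_X, <_{\vartheta_X})$, proceeding by the same simultaneous recursion on the length function $L_{\vartheta_X}$ that is used to define membership and the ordering, so that everything stays inside $\rca$. First I would record the simple bookkeeping facts: the recursive clauses for $s \in \vartheta_X$, $t <_{\vartheta_X} t'$ and $s \mapsto s^*$ each refer only to terms of strictly smaller length (here one uses that the G\"odel numbers are built into $L_{\vartheta_X}$, that $L_{\vartheta_X}(s^*) \le L_{\vartheta_X}(s)$, and that the arguments $t_i$ of a term $\omega^{t_0}+\dots+\omega^{t_n}$ and the argument $t'$ of $\vartheta t'$ all have length strictly below the parent term). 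Hence a statement of the form ``for all $s,t,u \in \vartheta_X$, $P(s,t,u)$'' can be proved by induction on $L_{\vartheta_X}(s)+L_{\vartheta_X}(t)+L_{\vartheta_X}(u)$, which is a legitimate $\Sigma^0_1$- or $\Pi^0_1$-induction available in $\rca$.

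Next I would treat irreflexivity and trichotomy together. Irreflexivity is an easy induction: in each of the clauses (i$'$)–(v$'$) defining $s <_{\vartheta_X} t$, inspection shows $s$ and $t$ have incompatible outer shapes, or else the clause reduces to a strictly smaller instance (e.g.\ the first bullet of (iv$'$) needs $s' <_{\vartheta_X} t'$, the comparison of two $\omega$-sums reduces to comparing some $s_j$ with $t_j$), so $s <_{\vartheta_X} s$ would force an infinite regress, contradicting the induction hypothesis. For trichotomy one shows that for all $s,t \in \vartheta_X$ exactly one of $s <_{\vartheta_X} t$, $s = t$, $t <_{\vartheta_X} s$ holds, again by induction on $L_{\vartheta_X}(s)+L_{\vartheta_X}(t)$: split on the five possible outer forms of $s$ and of $t$; mixed-shape pairs are handled directly by the clauses, and the two genuinely recursive cases—$s = \vartheta s'$ versus $t = \vartheta t'$, and $s = \omega^{s_0}+\dots+\omega^{s_n}$ versus $t = \omega^{t_0}+\dots+\omega^{t_m}$—reduce to trichotomy for shorter terms (for the $\vartheta$-case one additionally compares $(s')^*$ with $\vartheta t'$ and $\vartheta s'$ with $(t')^*$, whose lengths are bounded by those of $s$ and $t$).

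The main work, and the step I expect to be the real obstacle, is transitivity: if $s <_{\vartheta_X} t$ and $t <_{\vartheta_X} u$ then $s <_{\vartheta_X} u$, proved by induction on $L_{\vartheta_X}(s)+L_{\vartheta_X}(t)+L_{\vartheta_X}(u)$. One does a case analysis on the outer forms of $s$, $t$, $u$; most combinations are immediate from the clauses, but the delicate ones are those mixing the $\Omega$/$\mathfrak E_x$/$\vartheta s'$ ``principal'' terms with $\omega$-sums via the side conditions $s \le_{\vartheta_X} t_0$, and above all the case $s = \vartheta s'$, $t = \vartheta t'$, $u = \vartheta u'$, where one must combine $s' <_{\vartheta_X} t' <_{\vartheta_X} u'$ (giving $s' <_{\vartheta_X} u'$ by induction) with the auxiliary conditions $(s')^* <_{\vartheta_X} \vartheta t'$ and $(t')^* <_{\vartheta_X} \vartheta u'$ (or the ``$\vartheta s' \le_{\vartheta_X} (t')^*$'' alternative) to derive $(s')^* <_{\vartheta_X} \vartheta u'$; here one also needs small auxiliary monotonicity facts, e.g.\ that $r <_{\vartheta_X} r'$ implies $r^* \le_{\vartheta_X} (r')^*$ and that $(\omega^{r_0}+\dots+\omega^{r_k})^* = \max_{i} r_i^*$ behaves well under the ordering, each established by a subsidiary length-induction. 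Since the corresponding verification for the very similar systems $\omega^X$ and $\varepsilon_X$, and for the system $\vartheta_X$ itself, is carried out in \cite{rathjen-model-bi}, I would state the linearity as routine and only indicate the $\vartheta$-versus-$\vartheta$ transitivity case in detail, referring to \cite[Lemma~2.7]{rathjen-model-bi} for the remainder.
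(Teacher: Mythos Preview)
Your proposal is correct and in fact more detailed than the paper: the paper does not give any argument for this lemma but simply invokes \cite[Lemma~2.7]{rathjen-model-bi}, the same reference you cite at the end. Your outline of the length-induction for irreflexivity, trichotomy and transitivity is a faithful sketch of how that cited proof proceeds, so there is nothing to correct.
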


Now we come to the main technical result of the present section:

\begin{theorem}[$\rca$]
 The abstract Bachmann-Howard principle implies that $\vartheta_X$ is well-founded for any well-order $X$.
\end{theorem}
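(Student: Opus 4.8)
The plan is to follow the template of the proofs of Propositions~\ref{prop:exp-well-founded} and~\ref{prop:bh-yields-epsilon}: for a fixed well-order $X$ I will exhibit a coded dilator $T=T^X$ and then, from a Bachmann-Howard collapse for $T$ supplied by the hypothesis, construct an order embedding of $\vartheta_X$ into a well-founded order. The choice of $T$ is dictated by the observation that in $\vartheta_X$ each of the terms $\Omega$, $\mathfrak E_x$ and $\vartheta s'$ behaves like an $\varepsilon$-number, since clause~(v) in the definition of $\vartheta_X$ forbids a one-summand normal form $\omega^{s_0}$ with $s_0$ of one of these shapes. Consequently the ``$\vartheta$-free fragment'' of $\vartheta_X$, consisting of the terms built from $0$, $\Omega$ and the $\mathfrak E_x$ by Cantor normal forms, is isomorphic to $\varepsilon_{1+X}$ from Definition~\ref{def:vareps-X} (the single extra point, placed below $X$, naming $\Omega$), and this suggests taking $T_Y:=\varepsilon_{Y+1+X}$, where the $Y$-indexed $\varepsilon$-number atoms, which lie below $\Omega$, are to play the role of the collapse terms $\vartheta s'$. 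I let $T_f$ be the action of the functor $Z\mapsto\varepsilon_Z$ on $f$ extended by the identity on $1+X$, and I let $\supp^T_Y(t)$ be the set of $y\in Y$ for which the atom $\varepsilon_y$ occurs in $t$. Naturality of $\supp^T$ and the support condition of Definition~\ref{def:coded-prae-dilator} are then immediate, and, passing as in the earlier proofs to the representations $T^0,T^1,\supp^T$ and to the orders $D^T_Y\cong T_Y$, one checks by $\Delta^0_1$-comprehension that $T$ is a coded prae-dilator. That $T$ is even a coded dilator is where the hypothesis first enters: Proposition~\ref{prop:bh-yields-epsilon} gives, under the abstract Bachmann-Howard principle, that $\varepsilon_Z$ is well-founded for every well-order $Z$, and $Y+1+X$ is a well-order whenever $Y$ is, so that $D^T_Y\cong T_Y$ is well-founded.

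Applying the abstract Bachmann-Howard principle to $T$ now yields a well-order $Y$ together with a Bachmann-Howard collapse $\vartheta:D^T_Y\rightarrow Y$, which I transport along $D^T_Y\cong T_Y$ to a collapse $\vartheta:T_Y\rightarrow Y$. Next I define a map $h:\vartheta_X\rightarrow T_Y$ by simultaneous recursion on the length function $L_{\vartheta_X}$, via
\[
 h(0):=0,\quad h(\Omega):=\Omega,\quad h(\mathfrak E_x):=\mathfrak E_x,\quad h(\vartheta s):=\varepsilon_{\vartheta(h(s))},
\]
and
\[
 h(\omega^{s_0}+\dots+\omega^{s_n}):=\omega^{h(s_0)}+\dots+\omega^{h(s_n)},
\]
where on the right $\Omega$, $\mathfrak E_x$ and $\varepsilon_{\vartheta(h(s))}$ denote the corresponding $\varepsilon$-number atoms of $\varepsilon_{Y+1+X}$; I abbreviate $f(\vartheta s):=\vartheta(h(s))\in Y$, so that $h(\vartheta s)=\varepsilon_{f(\vartheta s)}$. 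One checks alongside the induction below that $h$ indeed lands in well-formed terms, since the exclusions on one-summand normal forms in $\vartheta_X$ and in $\varepsilon_{Y+1+X}$ correspond under $h$. The aim is then to prove, by induction on $L_{\vartheta_X}(s)+L_{\vartheta_X}(t)$, the implication $s<_{\vartheta_X}t\Rightarrow h(s)<_{T_Y}h(t)$; since both orders are linear, $h$ is then an order embedding, and because $T_Y=\varepsilon_{Y+1+X}$ is well-founded this yields that $\vartheta_X$ is well-founded. The comparisons among $\vartheta$-free terms, and between a $\vartheta$-term and a $\vartheta$-free term, are routine: on the $\vartheta$-free fragment $h$ is the isomorphism onto $\varepsilon_{1+X}\subseteq\varepsilon_{Y+1+X}$ described above, and the mixed cases of clauses~(iv') and~(v') reduce at once to the induction hypothesis.

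The genuinely new case, which I expect to be the main obstacle, is $\vartheta s_1<_{\vartheta_X}\vartheta s_2$. If this holds by the second alternative of clause~(iv'), i.e.\ $\vartheta s_1\leq_{\vartheta_X}(s_2)^*$, then $(s_2)^*$ is a $\vartheta$-subterm $\vartheta\sigma$ of $s_2$ (it cannot be $0$, since $\vartheta s_1\neq 0$); the induction hypothesis yields $f(\vartheta s_1)\leq_Y f(\vartheta\sigma)$, while $\varepsilon_{f(\vartheta\sigma)}$ occurs in $h(s_2)$, so that $f(\vartheta\sigma)\in\supp^T_Y(h(s_2))\lef_Y\vartheta(h(s_2))$ by clause~(ii) of Definition~\ref{def:bachmann-howard-collapse}; hence $h(\vartheta s_1)=\varepsilon_{f(\vartheta s_1)}<_{T_Y}\varepsilon_{\vartheta(h(s_2))}=h(\vartheta s_2)$. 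If instead $s_1<_{\vartheta_X}s_2$ and $(s_1)^*<_{\vartheta_X}\vartheta s_2$, then by clause~(i) of Definition~\ref{def:bachmann-howard-collapse} it suffices to verify $h(s_1)<_{T_Y}h(s_2)$, which is the induction hypothesis applied to $s_1<_{\vartheta_X}s_2$, together with the side condition $\supp^T_Y(h(s_1))\lef_Y\vartheta(h(s_2))$. Now $\supp^T_Y(h(s_1))=\{f(\vartheta\rho)\mid\vartheta\rho\text{ occurs in }s_1\}$, so this side condition says $\vartheta(h(\rho))<_Y\vartheta(h(s_2))$ for every $\vartheta$-subterm $\vartheta\rho$ of $s_1$, which the induction hypothesis provides as soon as $\vartheta\rho<_{\vartheta_X}\vartheta s_2$. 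This last point is precisely what the operation $\cdot^*$ is designed to secure: a short auxiliary induction on term length, using that $\rho^*<_{\vartheta_X}\vartheta\rho$ holds for every term $\rho$ (which is immediate from clause~(iv') and the recursion defining $\cdot^*$), shows that any $\vartheta$-subterm $\vartheta\rho$ of an arbitrary term $w$ satisfies $\vartheta\rho\leq_{\vartheta_X}w^*$; applying this with $w=s_1$ gives $\vartheta\rho\leq_{\vartheta_X}(s_1)^*<_{\vartheta_X}\vartheta s_2$, as needed. The remaining cases of the induction, and the verification that the nested inductions and the bookkeeping with $L_{\vartheta_X}$ go through in $\rca$, are routine and parallel the corresponding steps in the proof of Proposition~\ref{prop:bh-yields-epsilon}.
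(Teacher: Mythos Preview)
Your proposal is correct and follows essentially the same approach as the paper: both define the dilator $T_Y=\varepsilon_{Y\cup\{\Omega\}\cup X}$ (your $\varepsilon_{Y+1+X}$), invoke Proposition~\ref{prop:bh-yields-epsilon} to verify it is a dilator, obtain a Bachmann-Howard collapse $\vartheta:T_Y\to Y$, and define the same structural map $h:\vartheta_X\to T_Y$ sending $\vartheta s\mapsto\varepsilon_{\vartheta(h(s))}$. The only cosmetic difference is in the bookkeeping for the key case $\vartheta s_1<_{\vartheta_X}\vartheta s_2$: the paper carries the identity $f(r^*)=\max_{<_{T_Y}}(\{0\}\cup\{\varepsilon_y\mid y\in\supp^T_Y(f(r))\})$ simultaneously with the main induction, whereas you isolate the equivalent fact ``every outermost $\vartheta$-subterm $\vartheta\rho$ of $w$ satisfies $\vartheta\rho\le_{\vartheta_X}w^*$'' as a separate auxiliary lemma in $\vartheta_X$---but this is the same content expressed on the domain side rather than the codomain side.
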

\begin{proof}
 For a fixed well-order $X$ and an arbitrary linear order $Y$ we put
 \begin{equation*}
  T_Y:=\varepsilon_{Y\cup\{\Omega\}\cup X}.
 \end{equation*}
 Here $Y\cup\{\Omega\}\cup X$ is ordered as written: Any element of $Y$ is smaller than the constant~$\Omega$, which is in turn smaller than any element of $X$. By Definition~\ref{def:vareps-X} we obtain an order $<_{T_Y}$ on $T_Y$. For each embedding $f:Y\rightarrow Y'$ we define an embedding $T_f:T_Y\rightarrow T_{Y'}$ by the recursion
 \begin{equation*}
  T_f(t)=\begin{cases}
          t & \text{if $t=0,t=\varepsilon_\Omega$ or $t=\varepsilon_x$ with $x\in X$},\\
          \varepsilon_{f(y)} & \text{if $t=\varepsilon_y$ with $y\in Y$},\\
          \omega^{T_f(t_0)}+\dots+\omega^{T_f(t_n)} & \text{if $t=\omega^{t_0}+\dots+\omega^{t_n}$.}
         \end{cases}
 \end{equation*}
 It is straightforward to verify that $Y\mapsto T_Y$ and $f\mapsto T_f$ form an endofunctor of linear orders. To obtain a prae-dilator we define $\supp^T_Y:T_Y\rightarrow[Y]^{<\omega}$ by
 \begin{equation*}
  \supp^T_Y(t)=\begin{cases}
          \emptyset & \text{if $t=0,t=\varepsilon_\Omega$ or $t=\varepsilon_x$ with $x\in X$},\\
          \{y\} & \text{if $t=\varepsilon_y$ with $y\in Y$},\\
          \textstyle\bigcup_{i\leq n}\supp^T_Y(t_i) & \text{if $t=\omega^{t_0}+\dots+\omega^{t_n}$.}
         \end{cases}
 \end{equation*}
 By Proposition~\ref{prop:bh-yields-epsilon} the abstract Bachmann-Howard principle ensures that $\varepsilon_{Y\cup\{\Omega\}\cup X}$ is well-founded for any well-order $Y$. Thus $T$ is a dilator. Another application of the abstract Bachmann-Howard principle yields a well-order $Y$ with a Bachmann-Howard collapse
 \begin{equation*}
  \vartheta:T_Y\rightarrow Y.
 \end{equation*}
 In order to conclude we show that the function $f:\vartheta_X\rightarrow T_Y=\varepsilon_{Y\cup\{\Omega\}\cup X}$ with
 \begin{align*}
  f(0)&=0,\\
  f(\Omega)&=\varepsilon_{\Omega},\\
  f(\mathfrak E_x)&=\varepsilon_x,\\
  f(\vartheta t)&=\varepsilon_{\vartheta(f(t))},\\
  f(\omega^{t_0}+\dots+\omega^{t_n})&=\omega^{f(t_0)}+\dots+\omega^{f(t_n)}
 \end{align*}
 is order preserving. To establish the implication
 \begin{equation*}
  s<_{\vartheta_X}t\quad\Rightarrow\quad f(s)<_{T_Y} f(t)
 \end{equation*}
 we argue by induction on $L_{\vartheta_X}(s)+L_{\vartheta_X}(t)$. Simultaneously one must verify
 \begin{equation*}
  f(r^*)=\textstyle\max_{<_{T_Y}}(\{0\}\cup\{\varepsilon_y\,|\,y\in\supp^T_Y(f(r))\})
 \end{equation*}
 by induction on $L_{\vartheta_X}(r)$. The only interesting case is
 \begin{equation*}
  s=\vartheta s'<_{\vartheta_X}\vartheta t'=t.
 \end{equation*}
 We have to consider two possibilities: First assume $s'<_{\vartheta_X}t'$ and $(s')^*<_{\vartheta_X}\vartheta t'$. By the simultaneous induction hypothesis we get
 \begin{equation*}
  \{\varepsilon_y\,|\,y\in\supp^T_Y(f(s'))\}\leq^{\text{fin}}_{T_Y}f((s')^*)<_{T_Y}f(\vartheta t')=\varepsilon_{\vartheta(f(t'))},
 \end{equation*}
 which implies $\supp^T_Y(f(s'))\lef_Y\vartheta(f(t'))$. The induction hypothesis also provides the inequality $f(s')<_{T_Y}f(t')$. By the definition of Bachmann-Howard collapse we obtain $\vartheta(f(s'))<_Y\vartheta(f(t'))$ and then
 \begin{equation*}
  f(s)=\varepsilon_{\vartheta(f(s'))}<_{T_Y}\varepsilon_{\vartheta(f(t'))}=f(t).
 \end{equation*}
 Now assume that $s=\vartheta s'<_{\vartheta_X}\vartheta t'=t$ holds because of $s\leq_{\vartheta_X}(t')^*$. By the definition of Bachmann-Howard collapse we have $\supp^T_Y(f(t'))\lef_Y\vartheta(f(t'))$. Together with the simultaneous induction hypothesis and $0<_{T_Y}\varepsilon_{\vartheta(f(t'))}$ we can infer
 \begin{equation*}
  f(s)\leq_{T_Y}f((t')^*)=\textstyle\max_{<_{T_Y}}(\{0\}\cup\{\varepsilon_y\,|\,y\in\supp^T_Y(f(t'))\})<_{T_Y}\varepsilon_{\vartheta(f(t'))}=f(t),
 \end{equation*}
 as desired.
\end{proof}

Note that the case $X=\emptyset$ of the previous theorem yields the well-foundedness of the usual Bachmann-Howard ordinal. The following result completes our bootstrapping: It allows us to lower the base theory to $\rca$ (cf.~Theorem~\ref{thm:main-text} below).

\begin{corollary}[$\rca$]\label{cor:bhp-atr}
 The abstract Bachmann-Howard principle implies all axioms of $\atr$.
\end{corollary}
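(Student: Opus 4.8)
The plan is to combine the preceding theorem with standard facts about countable coded $\omega$-models and then to run a routine $\omega$-model reflection argument. By the theorem just proved, the (second-order version of the) abstract Bachmann-Howard principle implies that $\vartheta_X$ is well-founded for every well-order $X$. By the result of Rathjen and Valencia Vizca\'ino~\cite{rathjen-model-bi}, this entails that every set lies in a countable coded $\omega$-model $M$ of bar induction, and hence in a countable coded $\omega$-model of $\atr$ (bar induction being much stronger than $\atr$ and proving, in particular, every instance of arithmetical transfinite recursion). One should check that the base theory of the cited result is available here: this is so, since the abstract Bachmann-Howard principle already implies $\aca$ by Proposition~\ref{prop:exp-well-founded} together with the result of Girard and Hirst (in fact it implies $\mathbf{ACA_0^+}$ by Proposition~\ref{prop:bh-yields-epsilon} and the result of Marcone and Montalb\'an).

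It then remains to derive every axiom of $\atr$ from the statement that every set lies in a countable coded $\omega$-model of $\atr$. The axioms of $\rca$ hold by assumption, so only the instances of arithmetical transfinite recursion need to be checked. I would fix an arithmetical formula $\theta$ with set parameters $\vec P$ and a well-order $(X,<_X)$, and then pick a countable coded $\omega$-model $M\models\atr$ with $X,\vec P\in M$ (crucially, $M$ is chosen after $X$ is given). Since $X$ is genuinely well-founded it has no infinite descending sequence, and as every element of $M$ is an actual set, $M$ has no such sequence either; hence $M$ satisfies ``$X$ is a well-order''. Consequently $M$ contains a set $H$ which $M$ believes to be the transfinite iteration of $\theta$ along $X$ over the parameters $\vec P$. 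Its defining condition is arithmetical in $H$, $X$ and $\vec P$, and arithmetical statements with parameters in $M$ are absolute between $M$ and the ambient model; therefore $H$ really witnesses the relevant instance of arithmetical transfinite recursion. As $X$ and $\theta$ were arbitrary, the full scheme follows.

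The argument is essentially bookkeeping; the only points needing a little care are that $M$ must be selected depending on the given well-order $X$ (so that $X\in M$), and that the defining clause of the ATR-hierarchy is genuinely arithmetical in its parameters, so that absoluteness applies. The substantive work has already been carried out in the preceding theorem, so I expect no real obstacle here.
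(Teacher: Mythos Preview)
Your proposal is correct and follows essentially the same route as the paper: obtain $\aca$ from Proposition~\ref{prop:exp-well-founded} and Girard--Hirst so that countable coded $\omega$-models make sense, invoke the preceding theorem together with Rathjen--Valencia Vizca\'ino to place every set inside a countable coded $\omega$-model of bar induction, and then reflect the axioms of $\atr$ out of that model. The only cosmetic difference is that the paper compresses your explicit reflection argument into the observation that the axioms of $\atr$ are $\Pi^1_2$ and provable from bar induction (citing \cite[Corollary~VII.2.19]{simpson09}), whereas you unpack the absoluteness of well-foundedness downward and of the arithmetical hierarchy condition upward; these are the same argument at different levels of detail.
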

\begin{proof}
 According to Proposition~\ref{prop:exp-well-founded} the abstract Bachmann-Howard principle implies that $\omega^X$ is well-founded for any well-order $X$. By the aforementioned result of Girard~\cite[Section~II.5]{girard87} and Hirst~\cite{hirst94} this secures arithmetical comprehension. The latter allows us to argue in terms of countable coded $\omega$-models and valuations of formulas in these models. The previous theorem tells us that $\vartheta_X$ is well-founded for any well-order~$X$. As shown by Rathjen and Valencia Vizca\'ino~\cite{rathjen-model-bi} this implies that any set is countained in a countable coded $\omega$-model of bar induction. To conclude we recall that the axioms of $\atr$ are provable by bar induction and have complexity~$\Pi^1_2$ (see~\cite[Corollary~VII.2.19]{simpson09}).
\end{proof}

\section{Computing a Bachmann-Howard Fixed Point}

In this section we construct a notation system $\vartheta(T)$ for the smallest Bachmann-Howard fixed point of a coded prae-dilator $T$. The point is that $\vartheta(T)$ is computable relative to $T$, so that $\rca$ proves its existence as a set and indeed a linear order. The statement that $\vartheta(T)$ is well-founded for any coded dilator $T$ will be called the computable Bachmann-Howard principle. We will show that it is equivalent to the abstract Bachmann-Howard principle and thus to $\Pi^1_1$-comprehension.

To understand the construction of $\vartheta(T)$, assume that we have a Bachmann-Howard collapse $\vartheta:D^T_{\vartheta(T)}\rightarrow\vartheta(T)$. Any element of $D^T_{\vartheta(T)}$ is of the form $\langle a,\sigma\rangle$, where $a$ is a finite subset of $\vartheta(T)$ and $\sigma\in T_{|a|}$ satisfies $\supp^T_{|a|}(\sigma)=|a|$. Write $a=\{s_0,\dots,s_{n-1}\}$ with $s_0<_{\vartheta(T)}\dots<_{\vartheta(T)}s_{n-1}$. The idea is to represent the collapsed element $\vartheta(\langle a,\sigma\rangle)\in\vartheta(T)$ by the term $\vartheta_\sigma^{s_0,\dots,s_{n-1}}$.

\begin{definition}[$\rca$]\label{def:computable-bh-fixed-point}
For each coded prae-dilator $T$ the set $\vartheta(T)$ and the relation ${<_{\vartheta(T)}}\subseteq\vartheta(T)\times\vartheta(T)$ are defined by the following simultaneous recursion:
\begin{enumerate}[label=(\roman*)]
\item If we have elements $s_0<_{\vartheta(T)}\dots<_{\vartheta(T)}s_{n-1}$ of $\vartheta(T)$ and an element $\sigma\in T_n$ with $\supp^T_n(\sigma)=n$, then the term $\vartheta_\sigma^{s_0,\dots,s_{n-1}}$ is an element of $\vartheta(T)$ as well.
\end{enumerate}
Given elements $s=\vartheta_\sigma^{s_0,\dots,s_{n-1}}$ and $t=\vartheta_\tau^{t_0,\dots,t_{m-1}}$ of $\vartheta(T)$, we have $s<_{\vartheta(T)} t$ precisely if one of the following holds:
\begin{enumerate}[label=(\roman*')]
\item We have $T_f(\sigma)<_{T_k}T_g(\tau)$ for some strictly increasing functions
\begin{equation*}
f:n\rightarrow k:=|\{s_0,\dots,s_{n-1},t_0,\dots,t_{m-1}\}|\quad\text{and}\quad g:m\rightarrow k
\end{equation*}
with $f(i)<g(j)\Leftrightarrow s_i<_{\vartheta(T)} t_j$ and $g(j)<f(i)\Leftrightarrow t_j<_{\vartheta(T)} s_i$. Furthermore we have $s_{n-1}<_{\vartheta(T)}t$ or $n=0$.
\item We have $m>0$ and $s\leq_{\vartheta(T)} t_{m-1}$.
\end{enumerate}
\end{definition}

Note that $n=0$ is permitted in clause (i), leading to initial terms $\vartheta_\sigma^{\langle\rangle}$ with empty upper index. Thus the set $\vartheta(T)$ is empty if and only if $T_0$ is. The formulation of clause (i') is somewhat awkward because we do not yet know that $<_{\vartheta(T)}$ is a linear order. Once this fact is established we see that $f$ and $g$ are the unique functions that make the following diagram commute, where the vertical arrows are the increasing enumerations with respect to $<_{\vartheta(T)}$:
\begin{equation*}
\begin{tikzcd}
n\arrow[r]\arrow[d,"f"'] & \{s_0,\dots,s_{n-1}\}\arrow[d,hook']\\
k\arrow[r] & \{s_0,\dots,s_{n-1},t_0,\dots,t_{m-1}\}\\
m\arrow[u,"g"]\arrow[r] & \{t_0,\dots,t_{m-1}\}\arrow[u,hook]
\end{tikzcd}
\end{equation*}
To formalize Definition~\ref{def:computable-bh-fixed-point} in $\rca$ one starts with a set $\vartheta^0(T)\supseteq\vartheta(T)$ of terms that ignores the condition $s_0<_{\vartheta(T)}\dots<_{\vartheta(T)}s_{n-1}$ in clause~(i). Then consider the length function~$L_{\vartheta(T)}:\vartheta^0(T)\rightarrow\mathbb N$ defined by
\begin{equation*}
L_{\vartheta(T)}(s)=\begin{cases}
                     \ulcorner s\urcorner & \text{if $s=\vartheta_\sigma^{\langle\rangle}$},\\
                     \max\{\ulcorner s\urcorner,2\cdot L_{\vartheta(T)}(s_0)+\dots+2\cdot L_{\vartheta(T)}(s_n)+1\} & \text{if $s=\vartheta_\sigma^{s_0,\dots,s_n}$}.
                    \end{cases}
\end{equation*}
Now one can decide $r\in\vartheta(T)$ and $s<_{\vartheta(T)} t$ by simultaneous induction on $L_{\vartheta(T)}(r)$ resp.~$L_{\vartheta(T)}(s)+L_{\vartheta(T)}(t)$. As in the previous section we have included the G\"odel number $\ulcorner s\urcorner$ in order to ensure that $\forall_s(L_{\vartheta(T)}(s)\leq n\rightarrow\cdots)$ amounts to a bounded quantifier. The significance of the factor $2$ becomes clear in the following proof:

\begin{proposition}[$\rca$]\label{prop:notation-bh-fixed-linear}
For any coded prae-dilator $T$ the relation $<_{\vartheta(T)}$ is a linear order on $\vartheta(T)$.
\end{proposition}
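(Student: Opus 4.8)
The plan is to verify that $<_{\vartheta(T)}$ is irreflexive, transitive, and total (comparability), all by simultaneous induction on the length functions $L_{\vartheta(T)}$, mirroring the standard verification for ordinal notation systems. The crucial point — and the reason for the factor $2$ in the definition of $L_{\vartheta(T)}$ — is that when we compare $s=\vartheta_\sigma^{s_0,\dots,s_{n-1}}$ with $t=\vartheta_\tau^{t_0,\dots,t_{m-1}}$ via clause (i'), we must compare the upper-index elements $s_i$ with the $t_j$, and these comparisons must be available from the induction hypothesis. Since each $s_i$ has $2\cdot L_{\vartheta(T)}(s_i)+1\le L_{\vartheta(T)}(s)$ and similarly each $t_j$, any comparison $s_i <_{\vartheta(T)} t_j$ is decided at a stage with $L_{\vartheta(T)}(s_i)+L_{\vartheta(T)}(t_j) < L_{\vartheta(T)}(s)+L_{\vartheta(T)}(t)$; moreover the side condition ``$s_{n-1}<_{\vartheta(T)}t$ or $n=0$'' compares $s_{n-1}$ (length $< L_{\vartheta(T)}(s)$) with the full term $t$, which is again strictly smaller in the relevant sum. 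The factor $2$ gives the extra room needed because clause (i') involves both kinds of subcomparisons.

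First I would prove \emph{comparability}: given $s,t\in\vartheta(T)$, exactly one of $s<_{\vartheta(T)}t$, $s=t$, $t<_{\vartheta(T)}s$ holds. By the induction hypothesis the elements of $\{s_0,\dots,s_{n-1},t_0,\dots,t_{m-1}\}$ are linearly ordered by $<_{\vartheta(T)}$, so the set $k:=|\{s_0,\dots,s_{n-1},t_0,\dots,t_{m-1}\}|$ and the strictly increasing $f:n\to k$, $g:m\to k$ of clause (i') are well-defined and unique. Then $T_k$ is a linear order, so exactly one of $T_f(\sigma)<_{T_k}T_g(\tau)$, $T_f(\sigma)=T_g(\tau)$, $T_g(\tau)<_{T_k}T_f(\sigma)$ holds. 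If the $T_k$-comparison is an inequality, one checks that exactly one of the side conditions ``$s_{n-1}<_{\vartheta(T)}t$ or $n=0$'' (for $s<t$) resp. ``$t_{m-1}<_{\vartheta(T)}s$ or $m=0$'' (for $t<s$) can hold alongside clause (ii') in the other direction — here one uses the induction hypothesis to compare $s$ with $t_{m-1}$ and $t$ with $s_{n-1}$, which are strictly shorter comparisons. If $T_f(\sigma)=T_g(\tau)$, then since $T$ is a coded prae-dilator the support condition $\supp^T_k(T_f(\sigma))=\rng(f)\cup(\text{image part})$ forces $\rng f=\rng g$, hence $n=m$, $f=g$, $\sigma=\tau$, and $s_i=t_i$ for all $i$, i.e. $s=t$; one also checks the clause (ii') alternatives cannot simultaneously make $s<t$ or $t<s$. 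Mutual exclusivity in the remaining mixed situations (clause (i') one way, clause (ii') the other) is dispatched using the induction hypothesis and the fact that, by (ii'), $s\le_{\vartheta(T)}t_{m-1}<_{\vartheta(T)}t$ would contradict comparability of $s$ and $t$.

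Next I would prove \emph{irreflexivity}: $s<_{\vartheta(T)}s$ is impossible. Clause (i') with $s=t$ would give, after noting $f=g=\iota$, that $T_\iota(\sigma)<_{T_n}T_\iota(\sigma)$ (using injectivity/functoriality to identify the common refinement), contradicting irreflexivity of $<_{T_n}$; clause (ii') would give $s\le_{\vartheta(T)}s_{m-1}$, and since $s_{m-1}$ is a proper subterm one derives a contradiction from comparability and an easy subterm argument (formally, by induction one shows no term is $\le_{\vartheta(T)}$ to one of its own upper-index entries). Finally, \emph{transitivity}: assume $r<_{\vartheta(T)}s<_{\vartheta(T)}t$ and argue by induction on $L_{\vartheta(T)}(r)+L_{\vartheta(T)}(s)+L_{\vartheta(T)}(t)$, splitting into cases according to which clause justifies each of the two given inequalities. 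When both use clause (i'), one passes to the common refinement $k':=|\{r_i\}\cup\{s_i\}\cup\{t_i\}|$; naturality of the embeddings together with functoriality of $T$ reduces the desired $T$-inequality to transitivity of $<_{T_{k'}}$, and the side conditions chain together using the induction hypothesis on the shorter comparisons among the upper indices (this is exactly where one needs $r_{n-1}<_{\vartheta(T)}s$ together with $s<_{\vartheta(T)}t$ to conclude $r_{n-1}<_{\vartheta(T)}t$). The mixed cases (one inequality via (i'), the other via (ii')) are handled by observing that (ii') gives $s\le_{\vartheta(T)}t_{m-1}$ (resp. $r\le_{\vartheta(T)}s_{m-1}$), so the problem reduces to a comparison with a shorter term, to which the induction hypothesis or comparability applies.

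The main obstacle is the case of transitivity where both inequalities come from clause (i'): one must simultaneously (a) set up the three-fold common refinement and the commuting triangle of increasing embeddings, (b) use naturality of these embeddings and functoriality of $T$ to reduce to $<_{T_{k'}}$, and (c) verify that the ``$s_{n-1}<_{\vartheta(T)}t$ or $n=0$'' side conditions propagate — all while keeping every subcomparison within the budget permitted by $L_{\vartheta(T)}$, which is precisely what the factor $2$ secures. The bookkeeping with the support functions (to guarantee uniqueness of $f,g$ and to read off $\rng f=\rng g$ in the equality case) is the other delicate ingredient, but it is routine given Definition~\ref{def:coded-prae-dilator}.
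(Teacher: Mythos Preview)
Your proposal is correct and follows essentially the same route as the paper: a simultaneous induction on a single numerical parameter establishing irreflexivity, trichotomy, and transitivity, with the factor $2$ in $L_{\vartheta(T)}$ providing the headroom for the nested comparisons. The case analyses you sketch (including the three-fold refinement for transitivity when both inequalities come from clause~(i'), and the use of naturality of $\supp^T$ to force $\rng f=\rng g$ in the equality case) match the paper's.

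One point worth sharpening: your explanation of \emph{where} the factor $2$ is actually needed is a bit diffuse. The paper makes this precise by assigning weights $2\cdot L_{\vartheta(T)}(s)$ to the irreflexivity claim, $L_{\vartheta(T)}(s)+L_{\vartheta(T)}(t)$ to trichotomy, and $L_{\vartheta(T)}(r)+L_{\vartheta(T)}(s)+L_{\vartheta(T)}(t)$ to transitivity. The factor $2$ then does real work in two specific places. First, in the irreflexivity case where clause~(ii') gives $s\leq_{\vartheta(T)} s_{n-1}$: rather than invoking a separate subterm lemma as you suggest, the paper observes $s_{n-1}<_{\vartheta(T)} s$ (again by~(ii')) and applies the \emph{transitivity} part of the simultaneous induction hypothesis at weight $L_{\vartheta(T)}(s_{n-1})+L_{\vartheta(T)}(s)+L_{\vartheta(T)}(s_{n-1})<2\cdot L_{\vartheta(T)}(s)$ to derive $s_{n-1}<_{\vartheta(T)} s_{n-1}$. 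Second, in the trichotomy step one needs the induction hypothesis to rule out $s_i<_{\vartheta(T)} t_j\land t_j<_{\vartheta(T)} s_i$, which again appeals to transitivity plus irreflexivity at weight $2L_{\vartheta(T)}(s_i)+L_{\vartheta(T)}(t_j)$; the factor $2$ is exactly what keeps this below $L_{\vartheta(T)}(s)+L_{\vartheta(T)}(t)$. Your alternative of proving a standalone ``no term is $\leq$ one of its own upper indices'' lemma would also work, but the paper's packaging avoids the extra statement.
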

\begin{proof}
By simultaneous induction on $n$ one shows that
\begin{gather*}
2\cdot L_{\vartheta(T)}(s)\leq n\rightarrow s\not<_{\vartheta(T)} s,\\
L_{\vartheta(T)}(s)+L_{\vartheta(T)}(t)\leq n\rightarrow s<_{\vartheta(T)} t\lor s=t\lor t<_{\vartheta(T)} s,\\
L_{\vartheta(T)}(r)+L_{\vartheta(T)}(s)+L_{\vartheta(T)}(t)\leq n\rightarrow(r<_{\vartheta(T)} s\land s<_{\vartheta(T)} t\rightarrow r<_{\vartheta(T)} t)
\end{gather*}
holds for all $r,s,t\in\vartheta(T)$. To establish antisymmetry we write $s=\vartheta_\sigma^{s_0,\dots,s_{n-1}}$. Aiming at a contradiction, assume first that $s<_{\vartheta(T)} s$ holds by clause~(i') of Definition~\ref{def:computable-bh-fixed-point}. The functions $f$ and $g$ can only be the identity on $n=k=m$. Thus we would have to have $\sigma=T_f(\sigma)<_{T_n}T_g(\sigma)=\sigma$, contradicting the antisymmetry of~$<_{T_n}$. Now assume that $s<_{\vartheta(T)} s$ holds by clause~(ii'), which means that we have $n>0$ and~$s\leq_{\vartheta(T)}s_{n-1}$. On the other hand we have $s_{n-1}<_{\vartheta(T)}s$, by clause (ii') and the trivial inequality $s_{n-1}\leq_{\vartheta(T)}s_{n-1}$. In view of
\begin{equation*}
 L_{\vartheta(T)}(s_{n-1})+L_{\vartheta(T)}(s)+L_{\vartheta(T)}(s_{n-1})<2\cdot L_{\vartheta(T)}(s)
\end{equation*}
we can invoke the induction hypothesis to get $s_{n-1}<_{\vartheta(T)}s_{n-1}$ by transitivity. This contradicts the antisymmetry available by induction hypothesis.

To establish trichotomy we write $s=\vartheta^{s_0,\dots,s_{n-1}}_\sigma$ and $t=\vartheta^{t_0,\dots,t_{m-1}}_\tau$. The induction hypothesis implies that $<_{\vartheta(T)}$ is linear on $\{s_0,\dots,s_{n-1},t_0,\dots,t_{m-1}\}$ (in particular the induction hypothesis covers $s_i<_{\vartheta(T)}t_j\land t_j<_{\vartheta(T)}s_i\rightarrow s_i<_{\vartheta(T)}s_i$, due to the factor $2$ in the definition of $L_{\vartheta(T)}$). Thus we can consider the unique functions $f$ and $g$ that make the above diagram commute. First assume $T_f(\sigma)=T_g(\tau)$. Since $\supp^T$ is a natural transformation we get
\begin{multline*}
[f]^{<\omega}(n)=[f]^{<\omega}(\supp^T_n(\sigma))=\supp^T_k(T_f(\sigma))=\\
=\supp^T_k(T_g(\tau))=[g]^{<\omega}(\supp^T_m(\tau))=[g]^{<\omega}(m).
\end{multline*}
Together with $[f]^{<\omega}(n)\cup [g]^{<\omega}(m)=k$ this implies that $f$ and $g$ must be the identity on $n=k=m$. Thus we obtain $\sigma=\tau$ and $\langle s_0,\dots,s_{n-1}\rangle=\langle t_0,\dots,t_{m-1}\rangle$, which means $s=t$. Now let us assume $T_f(\sigma)<_{T_k}T_g(\tau)$. If we have $n=0$, then we get $s<_{\vartheta(T)} t$ by clause~(i'). If we have $n>0$, then the induction hypothesis yields $s_{n-1}<_{\vartheta(T)} t$ or $t\leq_{\vartheta(T)} s_{n-1}$. In the first case we get $s<_{\vartheta(T)} t$ by clause~(i'), while the second case yields $t<_{\vartheta(T)} s$ by clause~(ii'). For $T_g(\tau)<_{T_k}T_f(\sigma)$ the argument is symmetric.

Finally, we establish transitivity: Consider terms $r=\vartheta_\rho^{r_0,\dots,r_{l-1}}$, $s=\vartheta_\sigma^{s_0,\dots,s_{n-1}}$ and $t=\vartheta_\tau^{t_0,\dots,t_{m-1}}$ with $r<_{\vartheta(T)}s$ and $s<_{\vartheta(T)}t$. First assume that $s<_{\vartheta(T)}t$ holds by clause (ii'), i.e.~that we have $m>0$ and $s\leq_{\vartheta(T)}t_{m-1}$. Then the induction hypothesis yields $r<_{\vartheta(T)}t_{m-1}$, so that we get $r<_{\vartheta(T)}t$ by clause~(ii'). Now assume that $s<_{\vartheta(T)}t$ holds by clause~(i') while $r<_{\vartheta(T)}s$ holds by clause~(ii'). Then we have $n>0$ and $r\leq_{\vartheta(T)}s_{n-1}<_{\vartheta(T)}t$, so that the induction hypothesis yields $r<_{\vartheta(T)} t$. Finally, assume that both inequalities hold by clause~(i'). Then we have $l=0$ or $r_{l-1}<_{\vartheta(T)}s$, which yields $r_{l-1}<_{\vartheta(T)}t$ by induction hypothesis. To see that the remaining condition of clause~(i') is transitive one completes the above diagram by the inclusions into $\{r_0,\dots,r_{l-1},s_0,\dots,s_{n-1},t_0,\dots,t_{m-1}\}$.
\end{proof}

We can now show a central result of this paper: The theory $\rca$ proves the existence of Bachmann-Howard fixed points (but in general it will not prove their well-foundedness).

\begin{theorem}[$\rca$]\label{thm:computable-is-bh-fixed-point}
 Given any coded prae-dilator $T$, the linear order $\vartheta(T)$ is a Bachmann-Howard fixed point of $T$.
\end{theorem}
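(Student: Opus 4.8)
My approach is to write down the Bachmann-Howard collapse by hand. I would set $\vartheta\colon D^T_{\vartheta(T)}\to\vartheta(T)$ to be the map $\vartheta(\langle a,\sigma\rangle):=\vartheta_\sigma^{s_0,\dots,s_{n-1}}$, where $s_0<_{\vartheta(T)}\dots<_{\vartheta(T)}s_{n-1}$ is the $<_{\vartheta(T)}$-increasing enumeration of the finite set $a\subseteq\vartheta(T)$ (unique by the linearity established in Proposition~\ref{prop:notation-bh-fixed-linear}) and $n=|a|$. First I would check that this is well-defined: $\langle a,\sigma\rangle\in D^T_{\vartheta(T)}$ means precisely $\sigma\in T_n$ with $\supp^T_n(\sigma)=n$, which is exactly the hypothesis under which clause~(i) of Definition~\ref{def:computable-bh-fixed-point} declares $\vartheta_\sigma^{s_0,\dots,s_{n-1}}$ a term. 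Since the increasing enumeration of a coded finite set is computable, $\vartheta$ exists as a set in $\rca$ by $\Delta^0_1$-comprehension, and it remains to verify conditions~(i) and~(ii) of Definition~\ref{def:bachmann-howard-collapse}, read with $(D^T,\supp^{D^T})$ for $(T,\supp^T)$ and $X=\vartheta(T)$.

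Condition~(ii), i.e. $a=\supp^{D^T}_{\vartheta(T)}(\langle a,\sigma\rangle)\lef_{\vartheta(T)}\vartheta(\langle a,\sigma\rangle)$, is quick: it is vacuous when $n=0$, and when $n>0$ clause~(ii') of Definition~\ref{def:computable-bh-fixed-point} applied to the trivial inequality $s_{n-1}\leq_{\vartheta(T)}s_{n-1}$ yields $s_{n-1}<_{\vartheta(T)}\vartheta(\langle a,\sigma\rangle)$, after which transitivity (again Proposition~\ref{prop:notation-bh-fixed-linear}) gives $s_i<_{\vartheta(T)}\vartheta(\langle a,\sigma\rangle)$ for every $i<n$.

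The main content is condition~(i). Assume $\langle a,\sigma\rangle<_{D^T_{\vartheta(T)}}\langle b,\tau\rangle$ and $a\lef_{\vartheta(T)}t$, where $t:=\vartheta(\langle b,\tau\rangle)=\vartheta_\tau^{t_0,\dots,t_{m-1}}$; I want $s:=\vartheta(\langle a,\sigma\rangle)=\vartheta_\sigma^{s_0,\dots,s_{n-1}}<_{\vartheta(T)}t$ by clause~(i') of Definition~\ref{def:computable-bh-fixed-point}. Set $c:=a\cup b$ and $k:=|c|=|\{s_0,\dots,s_{n-1},t_0,\dots,t_{m-1}\}|$. The side condition ``$s_{n-1}<_{\vartheta(T)}t$ or $n=0$'' of clause~(i') is just the assumption $a\lef_{\vartheta(T)}t$. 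For the main condition I would identify the strictly increasing functions $f,g$ named in clause~(i') with the maps $|\iota_a^c|\colon n\to k$ and $|\iota_b^c|\colon m\to k$ of Definition~\ref{def:reconstruct-dilators}: from $\en_c\circ|\iota_a^c|=\iota_a^c\circ\en_a$ and $\en_c\circ|\iota_b^c|=\iota_b^c\circ\en_b$ one reads off $\en_c(|\iota_a^c|(i))=s_i$ and $\en_c(|\iota_b^c|(j))=t_j$, so that — $\en_c$ being the $<_{\vartheta(T)}$-increasing enumeration of $c$ — comparisons between values of $|\iota_a^c|$ and $|\iota_b^c|$ match comparisons between the $s_i$ and $t_j$, which is exactly what clause~(i') demands of $f,g$. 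Under this identification the assumption $\langle a,\sigma\rangle<_{D^T_{\vartheta(T)}}\langle b,\tau\rangle$ unfolds, via the definition of $<_{D^T_X}$ in Definition~\ref{def:reconstruct-dilators}, to precisely $T_f(\sigma)<_{T_k}T_g(\tau)$. Hence clause~(i') applies, condition~(i) holds, and $\vartheta$ is a Bachmann-Howard collapse, so $\vartheta(T)$ is a Bachmann-Howard fixed point of $T$.

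The one step I expect to be fiddly is this matching of combinatorial data: one must check carefully that the pair $(f,g)$ singled out by the interleaving conditions in clause~(i') genuinely coincides with $(|\iota_a^{a\cup b}|,|\iota_b^{a\cup b}|)$ — equivalently, that an element's position in the increasing enumeration of $a\cup b$ faithfully records its $<_{\vartheta(T)}$-rank — and that the degenerate cases $n=0$ and $m=0$ cause no trouble. Everything else is a routine unwinding of Definitions~\ref{def:reconstruct-dilators},~\ref{def:bachmann-howard-collapse} and~\ref{def:computable-bh-fixed-point}, using only the linearity and transitivity of $<_{\vartheta(T)}$ from Proposition~\ref{prop:notation-bh-fixed-linear}.
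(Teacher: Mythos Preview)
Your proposal is correct and follows essentially the same argument as the paper: you define the collapse $\vartheta(\langle a,\sigma\rangle)=\vartheta_\sigma^{s_0,\dots,s_{n-1}}$, verify condition~(ii) via clause~(ii') applied to $s_{n-1}\leq_{\vartheta(T)}s_{n-1}$, and verify condition~(i) by identifying the functions $f,g$ of clause~(i') with $|\iota_a^{a\cup b}|,|\iota_b^{a\cup b}|$ so that the $D^T$-inequality becomes exactly $T_f(\sigma)<_{T_k}T_g(\tau)$. If anything, you are slightly more explicit than the paper about why this identification holds and about the existence of $\vartheta$ in $\rca$.
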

\begin{proof}
 We must construct a Bachmann-Howard collapse $\vartheta:D^T_{\vartheta(T)}\rightarrow\vartheta(T)$. In view of Definition~\ref{def:reconstruct-dilators} we set
 \begin{equation*}
  \vartheta(\langle a,\sigma\rangle)=\vartheta_\sigma^{s_0,\dots,s_{n-1}}\quad\text{for $a=\{s_0,\dots,s_{n-1}\}$ with $s_0<_{\vartheta(T)}\dots<_{\vartheta(T)}s_{n-1}$}.
 \end{equation*}
 Let us verify the conditions from Definition~\ref{def:bachmann-howard-collapse}: Aiming at condition~(i) we assume $\langle a,\sigma\rangle<_{D^T_{\vartheta(T)}}\langle b,\tau\rangle$. By Definition~\ref{def:reconstruct-dilators} this means $T_{|\iota_a^{a\cup b}|}(\sigma)<_{T_{|a\cup b|}}T_{|\iota_b^{a\cup b}|}(\tau)$, where $\iota_a^{a\cup b}$ and $\iota_b^{a\cup b}$ are the inclusion maps from $a$ resp.~$b$ into $a\cup b$. Write $a=\{s_0,\dots,s_{n-1}\}$ and $b=\{t_0,\dots,t_{m-1}\}$ in increasing order, and observe that $|\iota_a^{a\cup b}|$ and $|\iota_b^{a\cup b}|$ coincide with the functions $f$ and $g$ from Definition~\ref{def:computable-bh-fixed-point}(i'). Assuming the side condition of Definition~\ref{def:bachmann-howard-collapse}(i) we also get
 \begin{equation*}
  \{s_0,\dots,s_{n-1}\}=\supp^{D^T}_{\vartheta(T)}(\langle a,\sigma\rangle)\lef_{\vartheta(T)}\vartheta(\langle b,\tau\rangle)=\vartheta_\tau^{t_0,\dots,t_{m-1}}.
 \end{equation*}
 Thus we must have $s_{n-1}<_{\vartheta(T)}\vartheta_\tau^{t_0,\dots,t_{m-1}}$ or $n=0$. Now Definition~\ref{def:computable-bh-fixed-point}(i') yields
 \begin{equation*}
  \vartheta(\langle a,\sigma\rangle)=\vartheta_\sigma^{s_0,\dots,s_{n-1}}<_{\vartheta(T)}\vartheta_\tau^{t_0,\dots,t_{m-1}}=\vartheta(\langle b,\tau\rangle),
 \end{equation*}
 as required by condition (i) of Definition~\ref{def:bachmann-howard-collapse}. To establish condition~(ii) we consider $\langle a,\sigma\rangle\in D^T_{\vartheta(T)}$ and write $a=\{s_0,\dots,s_{n-1}\}$ in increasing order. In case $n>0$ we observe
 \begin{equation*}
  s_0<_{\vartheta(T)}\dots<_{\vartheta(T)}s_{n-1}<_{\vartheta(T)}\vartheta_\sigma^{s_0,\dots,s_{n-1}}=\vartheta(\langle a,\sigma\rangle),
 \end{equation*}
 where the last inequality holds by clause (ii') of Definition~\ref{def:computable-bh-fixed-point}. This implies
 \begin{equation*}
  \supp^{D^T}_{\vartheta(T)}(\langle a,\sigma\rangle)=\{s_0,\dots,s_{n-1}\}\lef_{\vartheta(T)}\vartheta(\langle a,\sigma\rangle),
 \end{equation*}
 just as condition (ii) of Definition~\ref{def:bachmann-howard-collapse} demands.
\end{proof}

In view of the theorem, the following assertion is at least as strong as (the second-order version of) the abstract Bachmann-Howard principle:

\begin{definition}[$\rca$]
 The computable Bachmann-Howard principle is the statement that $\vartheta(T)$ is well-founded for any coded dilator $T$.
\end{definition}

To see that the abstract Bachmann-Howard principle implies its computable counterpart we show that the Bachmann-Howard fixed point $\vartheta(T)$ is minimal:

\begin{theorem}[$\rca$]\label{thm:comp-fixed-point-smallest}
 Consider a coded prae-dilator $T$. The order $\vartheta(T)$ can be embedded into any Bachmann-Howard fixed point of $T$.
\end{theorem}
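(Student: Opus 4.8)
The plan is to build an order embedding $h\colon\vartheta(T)\to X$ for an arbitrary Bachmann-Howard fixed point $X$ of $T$, equipped with a Bachmann-Howard collapse $\vartheta\colon D^T_X\to X$. The definition of $h$ is forced by the intended meaning of the terms of $\vartheta(T)$: the term $\vartheta_\sigma^{s_0,\dots,s_{n-1}}$ names the collapse of $\langle\{s_0,\dots,s_{n-1}\},\sigma\rangle$, so we set
\begin{equation*}
 h\bigl(\vartheta_\sigma^{s_0,\dots,s_{n-1}}\bigr):=\vartheta\bigl(\bigl\langle\{h(s_0),\dots,h(s_{n-1})\},\sigma\bigr\rangle\bigr).
\end{equation*}
For the right-hand side to be legitimate we need $h(s_0),\dots,h(s_{n-1})$ to be pairwise distinct, so that $a:=\{h(s_0),\dots,h(s_{n-1})\}$ has exactly $n$ elements; then $\langle a,\sigma\rangle\in D^T_X$, since $\supp^T_n(\sigma)=n$ holds by Definition~\ref{def:computable-bh-fixed-point}(i). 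In $\rca$ one defines $h$ by recursion on $L_{\vartheta(T)}$, using some fixed default value whenever the displayed clause is illegitimate, and then proves by simultaneous induction on $N$ that (A)~no $s\in\vartheta(T)$ with $L_{\vartheta(T)}(s)\leq N$ triggers the default value, and (B)~for all $s,t\in\vartheta(T)$ with $L_{\vartheta(T)}(s)+L_{\vartheta(T)}(t)\leq N$, $s<_{\vartheta(T)}t$ implies $h(s)<_X h(t)$. Since $X$ is a linear order and $\vartheta(T)$ satisfies trichotomy (Proposition~\ref{prop:notation-bh-fixed-linear}), (B) makes $h$ an injective order embedding, as the theorem asserts.

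To prove (A) at a term $s=\vartheta_\sigma^{s_0,\dots,s_{n-1}}$ one applies (B) at strictly smaller stages to the pairs $s_i<_{\vartheta(T)}s_j$ and obtains $h(s_0)<_X\dots<_X h(s_{n-1})$, so that $a$ has $n$ elements and $h(s)$ is properly defined. To prove (B), write also $t=\vartheta_\tau^{t_0,\dots,t_{m-1}}$ and put $a=\{h(s_0),\dots,h(s_{n-1})\}$, $b=\{h(t_0),\dots,h(t_{m-1})\}$, and distinguish the two clauses of Definition~\ref{def:computable-bh-fixed-point}. If $s<_{\vartheta(T)}t$ holds by clause~(ii'), so that $m>0$ and $s\leq_{\vartheta(T)}t_{m-1}$, then (B) at a smaller stage (or equality of terms) yields $h(s)\leq_X h(t_{m-1})$, while condition~(ii) of the Bachmann-Howard collapse applied to $\langle b,\tau\rangle$ gives $b=\supp^{D^T}_X(\langle b,\tau\rangle)\lef_X\vartheta(\langle b,\tau\rangle)=h(t)$; in particular $h(t_{m-1})<_X h(t)$, hence $h(s)<_X h(t)$.

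The principal case is when $s<_{\vartheta(T)}t$ holds by clause~(i'), via strictly increasing maps $f\colon n\to k$ and $g\colon m\to k$ with $T_f(\sigma)<_{T_k}T_g(\tau)$, and with $n=0$ or $s_{n-1}<_{\vartheta(T)}t$. By (B) at smaller stages $h$ restricts to an order isomorphism from $\{s_0,\dots,s_{n-1},t_0,\dots,t_{m-1}\}$ onto $a\cup b$; hence $|a\cup b|=k$ and $f=|\iota_a^{a\cup b}|$, $g=|\iota_b^{a\cup b}|$, so by Definition~\ref{def:reconstruct-dilators} the hypothesis $T_f(\sigma)<_{T_k}T_g(\tau)$ says exactly that $\langle a,\sigma\rangle<_{D^T_X}\langle b,\tau\rangle$. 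Moreover $a\lef_X h(t)$: this is vacuous for $n=0$, and for $n>0$ the clause provides $s_{n-1}<_{\vartheta(T)}t$, whence $s_i<_{\vartheta(T)}t$ for every $i<n$ (using transitivity in $\vartheta(T)$ when $i<n-1$), and then $h(s_i)<_X h(t)$ by (B) at a smaller stage. Since $\supp^{D^T}_X(\langle a,\sigma\rangle)=a$, condition~(i) of the Bachmann-Howard collapse now yields $h(s)=\vartheta(\langle a,\sigma\rangle)<_X\vartheta(\langle b,\tau\rangle)=h(t)$, which finishes this case; the remaining subcases are routine, and the induction is complete.

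The only genuine obstacle I anticipate is organisational: the value of $h$ on a term can be formed only after $h$ is known to be injective on the upper indices of that term, so the \emph{definition} of $h$ and the \emph{proof} that it preserves order must be entangled into a single simultaneous induction on $L_{\vartheta(T)}$ — and this is precisely where the design of $L_{\vartheta(T)}$, in particular its factor~$2$, is used, guaranteeing that the relevant subterm indices strictly decrease. Beyond that the argument is a direct translation between the clauses of Definition~\ref{def:computable-bh-fixed-point} and the conditions defining a Bachmann-Howard collapse, together with the description of $D^T_X$ and its ordering from Definition~\ref{def:reconstruct-dilators}.
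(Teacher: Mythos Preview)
Your proof is correct and follows essentially the same route as the paper: define the embedding by $h(\vartheta_\sigma^{s_0,\dots,s_{n-1}})=\vartheta(\langle\{h(s_0),\dots,h(s_{n-1})\},\sigma\rangle)$ and verify order-preservation by induction on $L_{\vartheta(T)}(s)+L_{\vartheta(T)}(t)$, handling clauses~(i') and~(ii') via conditions~(i) and~(ii) of the Bachmann-Howard collapse respectively. You are a bit more explicit than the paper about packaging well-definedness and order-preservation into a single simultaneous induction, but the substance is the same.
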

\begin{proof}
 Let $\vartheta:D^T_X\rightarrow X$ be a Bachmann-Howard collapse. The desired embedding $f:\vartheta(T)\rightarrow X$ can be recursively defined by
 \begin{equation*}
  f(\vartheta_\sigma^{s_0,\dots,s_{n-1}})=\vartheta(\langle\{f(s_0),\dots,f(s_{n-1})\},\sigma\rangle).
 \end{equation*}
 Inductively we may assume that $f$ is order preserving on $\{s_0,\dots,s_{n-1}\}$. In particular we have $|\{f(s_0),\dots,f(s_{n-1})\}|=n$, so that $\langle\{f(s_0),\dots,f(s_{n-1})\},\sigma\rangle$ is indeed an element of $D^T_X$. The implication
 \begin{equation*}
  s<_{\vartheta(T)}t\quad\Rightarrow\quad f(s)<_Xf(t)
 \end{equation*}
can be established by induction on $L_{\vartheta(T)}(s)+L_{\vartheta(T)}(t)$: Let us write $s=\vartheta_\sigma^{s_0,\dots,s_{n-1}}$ and $t=\vartheta_\tau^{t_0,\dots,t_{m-1}}$, as well as $a=\{s_0,\dots,s_{n-1}\}$ and $b=\{t_0,\dots,t_{m-1}\}$. First assume that $s<_{\vartheta(T)}t$ holds by clause~(i') of Definition~\ref{def:computable-bh-fixed-point}. This means that we have $T_{|\iota_a^{a\cup b}|}(\sigma)<_{T_{|a\cup b|}}T_{|\iota_b^{a\cup b}|}(\tau)$, using the notation from Section~\ref{sect:dilator-prs-so}. The induction hypothesis ensures that $f$ is order preserving on $a\cup b$, which implies
 \begin{equation*}
  \left|\iota_a^{a\cup b}\right|=\left|\iota_{[f]^{<\omega}(a)}^{[f]^{<\omega}(a)\cup [f]^{<\omega}(b)}\right|\quad\text{and}\quad\left|\iota_b^{a\cup b}\right|=\left|\iota_{[f]^{<\omega}(b)}^{[f]^{<\omega}(a)\cup [f]^{<\omega}(b)}\right|.
 \end{equation*}
 In view of Definition~\ref{def:reconstruct-dilators} we can infer
 \begin{equation*}
  \langle[f]^{<\omega}(a),\sigma\rangle<_{D^T_X}\langle[f]^{<\omega}(b),\tau\rangle.
 \end{equation*}
 In case $n>0$ we observe $s_0<_{\vartheta(T)}\dots<_{\vartheta(T)}s_{n-1}<_{\vartheta(T)}s<_{\vartheta(T)}t$, where the penultimate inequality holds by clause (ii') of Definition~\ref{def:computable-bh-fixed-point}. By induction hypothesis we get $f(s_0)<_X\dots<_Xf(s_{n-1})<_Xf(t)$ and thus
 \begin{equation*}
  \supp^{D^T}_X(\langle[f]^{<\omega}(a),\sigma\rangle)=[f]^{<\omega}(a)\lef_X f(t)=\vartheta(\langle[f]^{<\omega}(b),\tau\rangle).
 \end{equation*}
 Then condition~(i) of Definition~\ref{def:bachmann-howard-collapse} yields
 \begin{equation*}
  f(s)=\vartheta(\langle[f]^{<\omega}(a),\sigma\rangle)<_X\vartheta(\langle[f]^{<\omega}(b),\tau\rangle)=f(t),
 \end{equation*}
 as desired. Now assume that $s<_{\vartheta(T)}t$ holds by clause~(ii') of Definition~\ref{def:computable-bh-fixed-point}. This means that we have $m>0$ and $s\leq_{\vartheta(T)}t_{m-1}$. By induction hypothesis we can infer~$f(s)\leq_Xf(t_{m-1})$. Using clause~(ii) of Definition~\ref{def:bachmann-howard-collapse} we also get
 \begin{equation*}
  f(t_{m-1})\in [f]^{<\omega}(b)=\supp^{D^T}_X(\langle[f]^{<\omega}(b),\tau\rangle)\lef_X\vartheta(\langle[f]^{<\omega}(b),\tau\rangle)=f(t),
 \end{equation*}
 which yields $f(t_{m-1})<_X f(t)$. By transitivity we get $f(s)<_Xf(t)$.
\end{proof}

Putting results together, we obtain the following refinement of Theorem~\ref{thm:main-abstract}:

\begin{theorem}\label{thm:main-text}
 The following are equivalent over $\rca$:
 \begin{enumerate}[label=(\roman*)]
  \item The principle of $\Pi^1_1$-comprehension.
  \item The abstract Bachmann-Howard principle.
  \item The computable Bachmann-Howard principle.
 \end{enumerate}
\end{theorem}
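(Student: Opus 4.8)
The plan is to assemble the equivalence from results already in place, the only genuinely new ingredients being the bootstrapping Corollary~\ref{cor:bhp-atr} and the two structural facts about $\vartheta(T)$ from Theorems~\ref{thm:computable-is-bh-fixed-point} and~\ref{thm:comp-fixed-point-smallest}. The key observation for the part linking (i) and (ii) is that, under \emph{either} of these two hypotheses, the ambient theory proves all axioms of $\atr$: assuming (i) this is the standard fact that $\Pi^1_1$-comprehension implies arithmetical transfinite recursion over $\rca$ (see~\cite{simpson09}); assuming (ii) it is exactly Corollary~\ref{cor:bhp-atr}. Over $\atr$, however, (i) and (ii) are equivalent: this was noted just before Section~\ref{sect:bootstrapping}, combining Theorem~\ref{thm:main-abstract} with Lemma~\ref{lem:abstract-bhp-set-so} and Simpson's conservativity of $\atrs$ over $\atr$. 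Hence, reasoning in $\rca$ extended by (i): we obtain $\atr$, therefore the $\atr$-provable equivalence (i)$\Leftrightarrow$(ii), and therefore (ii). Symmetrically, reasoning in $\rca$ extended by (ii): Corollary~\ref{cor:bhp-atr} gives $\atr$, and then the $\atr$-equivalence gives (i). This establishes (i)$\Leftrightarrow$(ii) over $\rca$.

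It then remains to prove (ii)$\Leftrightarrow$(iii) over $\rca$, where (ii) is read as the second-order version of Definition~\ref{def:so-bh}. For (ii)$\Rightarrow$(iii), let $T$ be a coded dilator; (ii) yields a well-founded Bachmann-Howard fixed point $X$ of $T$, Theorem~\ref{thm:comp-fixed-point-smallest} embeds $\vartheta(T)$ into $X$, and $\rca$ proves that a suborder of a well-order is well-founded, so $\vartheta(T)$ is well-founded, which is (iii). For (iii)$\Rightarrow$(ii), let $T$ be a coded dilator; by (iii) the linear order $\vartheta(T)$ (which is a subset of $\mathbb N$ and exists in $\rca$) is well-founded, and Theorem~\ref{thm:computable-is-bh-fixed-point} supplies a Bachmann-Howard collapse $\vartheta:D^T_{\vartheta(T)}\rightarrow\vartheta(T)$, so $\vartheta(T)$ is a well-founded Bachmann-Howard fixed point of $T$, which is precisely the assertion of Definition~\ref{def:so-bh}, i.e.~(ii).

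The main (and essentially only) subtlety is the logical bookkeeping in the (i)$\Leftrightarrow$(ii) part: one cannot apply the $\atr$-level equivalence directly over the weaker base theory $\rca$, but under each of the two hypotheses separately the ambient theory already proves $\atr$, so the argument goes through by working inside the stronger theory in each case. Since all the real mathematical content has been discharged -- the bootstrapping in Corollary~\ref{cor:bhp-atr} and the fixed-point/minimality properties of $\vartheta(T)$ in Theorems~\ref{thm:computable-is-bh-fixed-point} and~\ref{thm:comp-fixed-point-smallest} -- I expect no further obstacle.
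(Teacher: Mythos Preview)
Your proposal is correct and follows essentially the same route as the paper: the paper also derives (i)$\Leftrightarrow$(ii) over $\rca$ by noting that each of (i) and (ii) separately yields $\atr$ (via $\Pi^1_1$-comprehension $\Rightarrow$ $\atr$ in the first case and Corollary~\ref{cor:bhp-atr} in the second), then invoking the $\atr$-level equivalence coming from Theorem~\ref{thm:main-abstract}, Lemma~\ref{lem:abstract-bhp-set-so} and conservativity; and it handles (ii)$\Leftrightarrow$(iii) exactly as you do, via Theorems~\ref{thm:comp-fixed-point-smallest} and~\ref{thm:computable-is-bh-fixed-point}. Your explicit remark about the ``logical bookkeeping'' is a nice clarification of what the paper leaves slightly implicit.
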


We remark that statement~(ii) refers to the second-order version of the abstract Bachmann-Howard principle (cf.~Definition~\ref{def:so-bh}). Over $\atrs$ the latter is equivalent to the set-theoretic version of the abstract Bachmann-Howard principle (cf.~Definition~\ref{def:abstract-bhp}), as we have shown in Lemma~\ref{lem:abstract-bhp-set-so}.

\begin{proof}
 By Theorem~\ref{thm:main-abstract} (and the aforementioned Lemma~\ref{lem:abstract-bhp-set-so}) the equivalence between (i) and (ii) can be proved in Simpson's set-theoretic version of $\atr$. By conservativity (see~\cite{simpson82,simpson09}) it is provable in $\atr$ itself. Since $\Pi^1_1$-comprehension implies arithmetical transfinite recursion, the implication~(i)$\Rightarrow$(ii) is already provable in~$\rca$. In view of Corollary~\ref{cor:bhp-atr} the same holds for the implication~(ii)$\Rightarrow$(i). Aiming at (ii)$\Rightarrow$(iii), we invoke the abstract Bachmann-Howard principle to get a well-founded Bachmann-Howard fixed point~$X$ of a given coded dilator~$T$. By Theorem~\ref{thm:comp-fixed-point-smallest} there is an order embedding of $\vartheta(T)$ into~$X$. Thus $\vartheta(T)$ is well-founded as well, as required by the computable Bachmann-Howard principle. Finally, the implication~(iii)$\Rightarrow$(ii) follows from the fact that $\vartheta(T)$ is a Bachmann-Howard fixed point of $T$, as established in Theorem~\ref{thm:computable-is-bh-fixed-point}.
\end{proof}

The merit of Theorem~\ref{thm:main-text} is that it pinpoints the computational content: It shows that the strength of the Bachmann-Howard principle lies uniquely in the preservation of well-foundedness, not in the existence of a linearly ordered Bachmann-Howard fixed point as such. It would be interesting to use methods from computability theory (similar to those in~\cite{marcone-montalban}) to analyze the computable Bachmann-Howard principle, or indeed its contrapositive: Can one describe a computable prae-dilator~$T$ with a computable descending sequence in $\vartheta(T)$, such that the hyper\-jump is computable from any witness to the fact that $T$ fails to be a dilator?

\bibliographystyle{amsplain}
\bibliography{Bibliography_Freund}

\end{document}